\newtheorem{prop}{Proposition}[section]
\newtheorem*{defi*}{Definition}
\newtheorem{lem}[prop]{Lemma}
\newtheorem{rem}[prop]{Remark}
\newtheorem{thm}[prop]{Theorem}
\newtheorem{coro}[prop]{Corollary}
\begin{document}

\noindent
{\Large\bf  A new approach to the existence of invariant measures for Markovian semigroups}\\

\noindent
Lucian Beznea\footnote{Simion Stoilow Institute of Mathematics  of the Romanian Academy,
Research unit No. 2, P.O. Box  1-764, RO-014700 Bucharest, Romania,
and University of Bucharest, Faculty of Mathematics and Computer Science
(e-mail: lucian.beznea@imar.ro)},   
Iulian C\^{i}mpean\footnote{Simion Stoilow Institute of Mathematics  of the Romanian Academy,
Research unit No. 2, P.O. Box  1-764, RO-014700 Bucharest, Romania,
(e-mail: iulian.cimpean@imar.ro)},   and 
Michael R\"ockner\footnote{Fakult\"at f\"ur Mathematik, Universit\"at Bielefeld,
Postfach 100 131, D-33501 Bielefeld, Germany.
(e-mail: roeckner@mathematik.uni-bielefeld.de)}
\\[5mm]

\noindent
{\bf Abstract.}
We give a new, two-step approach to prove existence of finite invariant measures for a given Markovian semigroup.
First, we identify a convenient {auxiliary measure} and then we prove conditions equivalent to the existence of an invariant finite measure which is absolutely continuous with respect to it.
As applications, we give a short proof for the result of Lasota and Szarek on invariant measures and we obtain a unifying generalization of different versions for Harris' ergodic theorem which provides an answer to an open question of Tweedie. 
We show that for a nonlinear SPDE on a Gelfand triple, the strict coercivity condition is sufficient to guarantee the existence of a unique invariant probability measure for the associated semigroup, once it satisfies a Harnack type inequality. 
A corollary of the main result shows that any uniformly bounded semigroup on
 $L^p$ possesses an invariant measures and we give some applications to sectorial perturbations of  Dirichlet forms. \\[2mm]

\noindent
{\bf Keywords.} Invariant measure, Markovian semigroup, transition function, Feller property, Lyapunov function, Krylov-Bogoliubov theorem,
 Harris' ergodic theorem, uniformly bounded $C_0$-semigroup on $L^p$, Koml\'os lemma, Sobolev inequality. \\

\noindent
{\bf Mathematics Subject Classification
(2010).} 37C40 (primary), 37A30, 37L40, 60J35, 60J25, 31C25, 37C40,  82B10 (secondary)



\section{Introduction}

The invariant measure is a key concept in ergodic theory.
In this paper we deal with the question of existence of finite invariant measures for Markovian semigroups.
This problem has been studied by many authors over the last decades, from various points of view; see e.g. the monographs 
\cite{MeTw93a}, \cite{DaZa96}, and the references therein.

If the underlying space $E$ is a Polish space, the semigroup is given by the transition probabilities of a Markov process and is Feller 
(i.e., it maps the space of bounded continuous real-valued  functions on $E$ into itself), then one can obtain the existence of an invariant measure by applying the result of \cite{LaSz06}, provided that there is a compact subset of $E$ which is infinitely often visited by the process.
Although these hypotheses are verified in many examples, sometimes they are quite difficult or even impossible to check, especially if the state space is of infinite dimensions.
Another technique to obtain invariant measures is to make use of Harris' theorem and its refined versions, 
cf. e.g. \cite{MeTw93a}, \cite{MeTw93b}, \cite{MeTw93c}, \cite{MeTw93d}, \cite{DoFoGu09}, and \cite{Ha10}.
In contrast to the previously mentioned, these results involve non-topological assumptions such as the existence of {\it small} sets (in the sense which will be made precise in Subsection 3.2 below) that are infinitely often visited.
This kind of test sets are encountered, provided the associated process is irreducible; see \cite{MeTw93a}, Theorem 5.2.2.
Invariant measures have also been investigated from an analytic perspective, as in \cite{BoRoZh00} and \cite{Hi00}, by working with strongly continuous Markovian semigroups on $L^p$, $1<p<\infty$.
Examples of this situation arise by considering sectorial perturbations of Dirichlet forms satisfying some functional inequalities (see Subsection 3.3 below).

The purpose of this paper is to give a new approach to the existence of invariant measures for Markovian semigroups, 
consisting of two steps. 
First, we construct a convenient {\it auxiliary} measure $m$ (see Proposition \ref{prop 2.7}) and then we give conditions on the pair $(P_t, m)$ 
which characterize the existence of a non-zero integrable co-excessive function for $(P_t)_{t \geq 0}$, 
regarded as a semigroup on $L^\infty (m)$, which is equivalent to the existence of an invariant measure for $(P_t)_{t\geq 0}$, which is absolutely continuous with respect to $m$ (see Theorem \ref{thm 2.3} below and also Theorems \ref{thm 2.5}, \ref{thm 2.3.1} as its useful variants).
Therefore, we call the procedure proposed above {\it the two-step approach}; see Subsection 2.2.
We point out that our main results are entirely measure theoretic and also do not involve irreducibility properties of the semigroup.

Several applications are considered: In Subsection 3.1, although not in its full generality, we give a short proof of the well known result of Lasota and Szarek \cite{LaSz06}. 
Here, the two-step approach gives an additional benefit because it implies the absolute continuity of the obtained invariant measure with respect to the auxiliary measure.

In Subsection 3.2 we unify various versions of Harris' ergodic theorem to a more general one; see Theorem \ref{thm 3.8.0}, which contains all of these as special cases. 
As a byproduct, in Corollary \ref{coro 3.10} we give an answer to an open question mentioned by Tweedie \cite{Tw01}. 

In Subsection 3.3 we show that for a nonlinear SPDE on a Gelfand triple $V \subset H \subset V^\ast$, under a Wang's Harnack type inequality, the strict coercivity condition with respect to the $H$-norm is sufficient to guarantee the existence of a unique invariant probability measure for the solution; see Proposition \ref{prop 3.12}.
This result improves the ones from \cite{Li09} and \cite{Wa13} where the embedding $V \subset H$ must be compact and the strict coercivity is considered with respect to the stronger $V$-norm.
We also consider a perturbation of a Markov kernel satisfying a combined Harnack-Lyapunov condition, for which the result of Tweedie (Theorem \ref{thm 3.5} below) can not be used, but for which our two-step approach works easily. 
We also discuss the applicability of Harris' result to this kind of perturbation. 
The  last part of this subsection was written taking into account a kind remark of Martin Hairer, which lead to the statement of Proposition \ref{prop 3.16}.

In Subsection 3.4 we study the case of uniformly bounded $C_0$-semigroups on $L^p$, $p \geq 1$. Implementing our two-step approach we obtain new applications for semigroups coming from small perturbations of Dirichlet forms, generalizing \cite{BoRoZh00} and \cite{Hi98}.

\section{Existence of invariant measures}

\subsection{Preliminaries}
Throughout, $(E, \mathcal{B})$ is a measurable space and $m$ a finite positive measure on it. 
Let $L^p(m)$, $1 \leq p \leq \infty$ be the standard Lebesgue spaces and $\| \cdot \|_p$ the associated norms.

We denote by $L^p_+(m)$ the space of positive elements from $L^p(m)$. 
A linear operator $T$ on $L^p(m)$ is called positivity preserving if $T(L^p_+(m)) \subset L^p_+(m)$. 
Note that (as in \cite{JaSc06}, Lemma 1.2), any positivity preserving operator on $L^p(m)$, $1 \leq p \leq \infty$ is automatically bounded. 
$T$ is called sub-Markovian (resp. Markovian) if it is positivity preserving and $T1 \leq 1$ (resp. $T1 = 1$). 
If $T$ is a sub-Markovian operator on $L^p(m)$ for some $p \geq 1$, then $T$ extends to a sub-Markovian operator on $L^r(m)$ for all $p\leq r \leq \infty$; see [Ja Sch 06]. 
Moreover, if $(E, \mathcal{B})$ is a Lusin measurable space then $T$ is given by a sub-Markovian kernel on $(E, \mathcal{B})$; cf. e.g. \cite{BeBo04}, Lemma A.1.9.

We recall that a {\it transition function} on $(E,\mathcal{B})$ is a family $(P_t)_{t\geq 0}$ of sub-Markovian kernels on $(E,\mathcal{B})$ such that $P_t(P_s f) = P_{s+t}f$ for all positive $\mathcal{B}$-measurable functions $f$ and all $s, t \in \mathbb{R}_+$.
The transition function $(P_t)_{t\geq 0}$ is called Markovian provided that for all $t$ (or for only one $t > 0$) the kernel $P_t$ is Markovian. 
The transition function $(P_t)_{t \geq 0}$ is called {\it measurable} if the function $(t, x) \mapsto P_tf(x)$ is $\mathcal{B}([0,\infty))\otimes \mathcal{B}$-measurable for all positive $\mathcal{B}$-measurable functions $f$.

Hereinafter, $(P_t)_{t \geq 0}$ and $m$ are satisfying either

(A$_1$) $(P_t)_{t \geq 0}$ is a strongly continuous semigroup of Markovian operators on $L^p(m)$ for some $p \geq 1$,

or

(A$_2$) $(P_t)_{t\geq 0}$ is a measurable Markovian transition function on $(E, \mathcal{B})$ such that $m(f) = 0 \Rightarrow m(P_tf) = 0$ for all $t > 0$ and all positive $\mathcal{B}$-measurable functions $f$. In this case, we say that $m$ in an {\it auxiliary} measure for $(P_t)_{t\geq 0}$.

Our goal is to investigate the existence of a nonzero invariant measure $\nu$ for $(P_t)_{t \geq 0}$, i.e. a nonzero finite positive measure $\nu$
on $(E, \mathcal{B})$ such that $\int P_tf d\nu = \int f d\nu$ for all $t > 0$ and all bounded $\mathcal{B}$-measurable functions $f$.

As a matter of fact, the class of invariant measures to be studied consists of absolutely continuous measures with respect to the fixed measure $m$, whose densities are invariant functions for the dual semigroup. 
Inspired by well known ergodic properties for semigroups and resolvents (see for example \cite{BeCiRo15}), our main idea in order to produce co-invariant functions is to apply some compactness results in $L^1(m)$, not for $(P_t)_{t \geq 0}$ but for its adjoint semigroup.
However, if $(P_t)_{t \geq 0}$ satisfies (A$_1$) or (A$_2$), it is not obvious that its adjoint semigroup may be regarded as a semigroup acting on $L^1(m)$. 
Apparently, another difficulty when $(P_t)_{t > 0}$ satisfies (A$_2$) is the lack of Bochner integrability of its adjoint, on $(L^{\infty}(m))^{\ast}$. 
All these issues are clarified by the following result, whose proof is presented in Appendix.

\vspace{0.2cm}

\begin{lem} \label{lem 2.1}
i) Assume that $(P_t)_{t \geq 0}$ satisfies (A$_1$) for $p > 1$. Then the adjoint semigroup $(P_t^{\ast})_{t \geq 0}$ on $L^{p'}(m)$, $\dfrac{1}{p} + \dfrac{1}{p'} = 1$, may be regarded as a $C_0$-semigroup of positivity preserving operators on $L^1(m)$.

ii) Assume that $(P_t)_{t \geq 0}$ satisfies either (A$_2$) or (A$_1$) for $p=1$. Then the adjoint semigroup $(P_t^{\ast})_{t \geq 0}$ on $(L^{\infty}(m))^{\ast}$ may be regarded as a semigroup of positivity preserving operators acting on $L^1(m)$, and there exists $(\varphi_t)_{t \geq 0} \subset L_+^1(m)$ with the following properties:

ii.1) $m(\int_{0}^{t}P_sf ds) = m(f \varphi_t)$ \quad for all $t \geq 0$ and $f \in L^\infty(m)$.

ii.2) $P_t^{\ast}\varphi_s = \varphi_{t + s} - \varphi_s$ \quad for all $s, t \geq 0$

ii.3) $\| \dfrac{1}{s} (\varphi_{t + s} - \varphi_s) \|_{L^1} \mathop{\longrightarrow}\limits_{s \to \infty} 0$ for all $t \geq 0$.

\end{lem}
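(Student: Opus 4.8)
The plan is to treat parts (i) and (ii) separately, since the duality picture is genuinely different in the reflexive case versus the $L^1/L^\infty$ case. For part (i), with $p>1$, the space $L^{p'}(m)$ is reflexive and $(P_t^\ast)_{t\ge 0}$ is automatically a $C_0$-semigroup on $L^{p'}(m)$ (the adjoint of a $C_0$-semigroup on a reflexive space is again $C_0$; this is classical). Positivity preservation of $P_t^\ast$ on $L^{p'}$ is immediate by duality from positivity preservation of $P_t$ on $L^p$, testing against $L^p_+(m)$. The point that needs care is the claim that this semigroup ``may be regarded as a $C_0$-semigroup on $L^1(m)$''. Here I would argue that since $m$ is finite, $L^{p'}(m)\subset L^1(m)$ densely and continuously, and since $P_t^\ast$ is $m$-sub-Markovian on $L^{p'}$ (indeed $P_t^\ast$ is positivity preserving and, because $P_t 1\le 1$ in $L^\infty(m)$, one checks $\int P_t^\ast g\,dm=\int g\,P_t 1\,dm\le\int g\,dm$ for $g\in L^{p'}_+$, so $P_t^\ast$ is a sub-Markovian operator on $L^{p'}(m)$), it extends by the standard Riesz--Thorin/interpolation argument recalled in the Preliminaries (following \cite{JaSc06}) to a contraction on $L^1(m)$. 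Strong continuity on $L^1(m)$ then follows from strong continuity on $L^{p'}(m)$ plus the density of $L^{p'}(m)$ in $L^1(m)$ and the uniform $L^1$-contractivity of the extended operators, via the usual $3\varepsilon$-argument.

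For part (ii), the adjoint lives a priori only on $(L^\infty(m))^\ast$, which is much larger than $L^1(m)$, so the substance is to produce the densities $\varphi_t$ and to show the relevant objects actually sit inside $L^1(m)$. The natural construction is: for fixed $t\ge 0$, define a functional on $L^\infty(m)$ by $f\mapsto m\bigl(\int_0^t P_s f\,ds\bigr)$. Under (A$_2$) this is well defined because of the measurability of the transition function (so $\int_0^t P_s f\,ds$ makes sense pointwise) and $m$-finiteness, and under (A$_1$) for $p=1$ it is well defined by strong continuity giving Bochner integrability of $s\mapsto P_s f$ in $L^1(m)$. This functional is positive, linear, and bounded on $L^\infty(m)$, and crucially it is \emph{order-continuous} / absolutely continuous with respect to $m$: if $f_n\downarrow 0$ $m$-a.e. with $|f_n|\le 1$, then under the hypothesis $m(f)=0\Rightarrow m(P_s f)=0$ (which holds in both cases: it is assumed in (A$_2$) and follows from $L^1$-contractivity in (A$_1$), $p=1$) together with dominated convergence one gets $m(\int_0^t P_s f_n\,ds)\to 0$. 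By the identification of order-continuous positive functionals on $L^\infty(m)$ with $L^1_+(m)$ (a form of the Radon--Nikodym theorem, using $m$-finiteness), there is a unique $\varphi_t\in L^1_+(m)$ representing it, which is exactly (ii.1).

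Once $\varphi_t$ is in hand, (ii.2) should follow by a direct computation: for $f\in L^\infty(m)$,
\[
m\bigl(f\,P_t^\ast\varphi_s\bigr)=m\bigl((P_t f)\,\varphi_s\bigr)=m\Bigl(\int_0^s P_r P_t f\,dr\Bigr)=m\Bigl(\int_t^{t+s} P_r f\,dr\Bigr)=m(f\,\varphi_{t+s})-m(f\,\varphi_t),
\]
where I have used (ii.1), the semigroup property, and a change of variables; here $P_t^\ast$ denotes the action on $L^1(m)$, and the first equality is the definition of ``$P_t^\ast$ regarded on $L^1(m)$'', namely the predual of $P_t$ acting on $L^\infty(m)$ — one must check separately that this predual indeed maps $L^1(m)$ into $L^1(m)$, which again comes from the sub-Markovian interpolation argument, noting $P_t^\ast$ is positivity preserving and $\int P_t^\ast g\,dm\le\int g\,dm$. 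Since $f\in L^\infty(m)$ was arbitrary, (ii.2) follows. For (ii.3), I would use $\varphi_{t+s}-\varphi_s=P_s^\ast\varphi_t$ from (ii.2) and estimate $\frac1s\|P_s^\ast\varphi_t\|_{L^1}$; but this is not $o(1)$ in general unless one is careful — rather the intended estimate is $\frac1s\|\varphi_{t+s}-\varphi_s\|_{L^1}=\frac1s\|P_s^\ast\varphi_t\|_{L^1}\le\frac1s\|\varphi_t\|_{L^1}$ using that $P_s^\ast$ is an $L^1$-contraction, and $\|\varphi_t\|_{L^1}=m(\int_0^t P_s 1\,ds)\le t\,m(E)$ is a fixed finite number, so dividing by $s$ and letting $s\to\infty$ gives $0$.

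The main obstacle, and the part I would expect to require the most care, is the rigorous identification of the adjoint on $(L^\infty(m))^\ast$ with an operator genuinely acting on $L^1(m)$ in case (ii) — i.e. showing that the a priori ``large'' adjoint, when restricted/corestricted appropriately, preserves the band $L^1(m)\hookrightarrow (L^\infty(m))^\ast$ of order-continuous functionals. This is where the absolute-continuity hypothesis in (A$_2$) (or $L^1$-contractivity in (A$_1$)) is essential, and where one must be scrupulous about which ``$P_t^\ast$'' is meant in each identity. Everything else — positivity, the contraction estimates, the Radon--Nikodym representation, the semigroup algebra in (ii.2) — is routine once that identification is set up, and the finiteness of $m$ is used repeatedly to keep constants controlled and to pass between $L^p$ spaces.
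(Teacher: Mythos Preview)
Your proof is correct and follows essentially the same route as the paper's: in part (i) you extend $P_t^\ast$ from $L^{p'}$ to $L^1$ using the $L^1$-contractivity estimate $\int P_t^\ast g\,dm=\int g\,P_t 1\,dm\le\int g\,dm$ together with density, then get strong continuity by a $3\varepsilon$-argument, exactly as the paper does (the paper writes the extension as $P_t^\ast f:=\sup_n P_t^\ast(f\wedge n)$, which is the same density extension made explicit); in part (ii) your order-continuity/Radon--Nikodym argument for the functional $f\mapsto m(\int_0^t P_s f\,ds)$ is precisely the paper's $\sigma$-additivity + Radon--Nikodym argument phrased in functional-analytic language, and your computations for (ii.2) and (ii.3) match.

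One caveat on labeling: the ``Riesz--Thorin/interpolation argument recalled in the Preliminaries (following \cite{JaSc06})'' that you invoke twice extends a sub-Markovian operator from $L^p$ to $L^r$ for $r\ge p$, not downward to $L^1$. What actually does the work in your argument---and you do compute it---is the $L^1$-bound $\|P_t^\ast g\|_1\le\|g\|_1$ for $g\ge 0$, which together with positivity and density of $L^{p'}$ in $L^1$ gives the extension; in part (ii) the same role is played by the order-continuity you already established for $\varphi_t$ (just apply it to the functional $f\mapsto \int g\,P_t f\,dm$). So drop the interpolation citation and name the mechanism directly.
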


\begin{rem} \label{rem 2.2}

If $(P_t)_{t \geq 0}$ satisfies (A$_1$) for $p>1$, then by Lemma \ref{lem 2.1}, i), the Bochner integrals $\widetilde{\varphi}_t := \int_{0}^{t} P_s^{\ast}1 ds$ are well defined in $L^1(m)$ for all $t > 0$, and $(\widetilde{\varphi}_t)_{t > 0}$ satisfies ii.1) - ii.3). 
On the other hand, if $(P_t)_{t \geq 0}$ is either as in (A$_2$) or as in (A$_1$) for $p=1$, then $ t \mapsto P_t^{\ast}1$ may no longer be integrable on compact intervals. 
From this point of view, $(\varphi_t)_{t > 0}$ in Lemma \ref{lem 2.1}, ii) should be regarded as a substitute for $(\int_{0}^{t} P_s^{\ast}1 ds)_{t > 0}$.

\end{rem}

Recall that if $(P_t)_{t \geq 0}$ is a measurable Markovian transition function on $(E, \mathcal{B})$ (or satisfies (A$_1$)), then the corresponding resolvent $(R_{\alpha})_{\alpha > 0}$ is defined by
$$
R_{\alpha} f(x) = \int_{0}^{\infty} e^{-\alpha t} P_t f(x) dt \; 
$$
for all bounded $\mathcal{B}$-measurable functions $f$, ($m$-a.e.) $x \in E$, and $\alpha > 0$. 

The following known result shows that the problem of existence of invariant measures for a semigroup of operators may be stated in terms of a single operator. 

\begin{prop} \label{prop 2.3} 
The following assertions hold for a measurable Markovian transition function $(P_t)_{t \geq 0}$ on $(E, \mathcal{B})$.

i) The measure $m$ is invariant for $(P_t)_{t \geq 0}$ if and only if $m \circ \alpha R_\alpha = m$ for some (hence for all) $\alpha >0$.

ii) $(P_t)_{t \geq 0}$ possesses an invariant measure if and only if there exists $t_0 > 0$ such that $P_{t_0}$ possesses an invariant measure.

\end{prop}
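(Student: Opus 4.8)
The plan is to treat the two parts separately, in each case reducing the assertion about the semigroup to a computation involving the resolvent (for (i)) or a single kernel (for (ii)). For (i), the implication ``$m$ invariant for $(P_t)_{t\ge0}$ $\Rightarrow$ $m\circ\alpha R_\alpha=m$ for every $\alpha>0$'' is immediate from the definition of $R_\alpha$: since $(P_t)_{t\ge0}$ is a measurable transition function, $(t,x)\mapsto P_tf(x)$ is $\mathcal B([0,\infty))\otimes\mathcal B$-measurable, so Tonelli's theorem gives $m(\alpha R_\alpha f)=\alpha\int_0^\infty e^{-\alpha t}m(P_tf)\,dt=\alpha\int_0^\infty e^{-\alpha t}m(f)\,dt=m(f)$ for positive $f$, hence for all bounded $f$ by linearity; in particular the parenthetical ``hence for all $\alpha$'' is accounted for. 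The substantial point is the converse: assuming $m\circ\alpha R_\alpha=m$ for one fixed $\alpha>0$, deduce $m\circ P_t=m$ for all $t>0$. I would fix a bounded $\mathcal B$-measurable $f$, set $c:=m(f)$ and $g(t):=m(P_tf)$ (bounded and measurable in $t$, again by the measurability of $(P_t)_{t\ge0}$ and Tonelli), use $P_s(P_tf)=P_{s+t}f$ together with the change of variable $u=s+t$ to get $\alpha R_\alpha(P_tf)=\alpha e^{\alpha t}\bigl(R_\alpha f-\int_0^t e^{-\alpha u}P_uf\,du\bigr)$, and then apply $m$, using $m(R_\alpha f)=c/\alpha$ and the hypothesis $m(\alpha R_\alpha(P_tf))=m(P_tf)$, to obtain, for every $t\ge0$, the Volterra-type identity $e^{-\alpha t}g(t)=c-\alpha\int_0^t e^{-\alpha u}g(u)\,du$.

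To finish (i), set $h(t):=\int_0^t e^{-\alpha u}g(u)\,du$. Then $h$ is absolutely continuous with $h(0)=0$ and $h'(t)=e^{-\alpha t}g(t)=c-\alpha h(t)$ for a.e.\ $t$; solving this linear ODE (equivalently, differentiating $e^{\alpha t}h(t)$) yields $h(t)=(c/\alpha)(1-e^{-\alpha t})$ for all $t\ge0$. Substituting back into the identity above, which was derived for \emph{every} $t$, gives $e^{-\alpha t}g(t)=ce^{-\alpha t}$, i.e.\ $g(t)=c$ for all $t>0$; since $f$ was arbitrary, $m$ is invariant for $(P_t)_{t\ge0}$.

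For (ii), the ``only if'' part is trivial, an invariant measure for $(P_t)_{t\ge0}$ being in particular invariant for the single kernel $P_{t_0}$. For the ``if'' part, let $\mu$ be a nonzero finite positive measure with $\mu\circ P_{t_0}=\mu$ and average over one period: define $\nu(f):=\frac1{t_0}\int_0^{t_0}\mu(P_sf)\,ds$ for bounded $\mathcal B$-measurable $f$. Measurability of $(P_t)_{t\ge0}$ makes $s\mapsto\mu(P_sf)$ measurable and bounded, so $\nu$ is well defined; it is a finite positive measure by monotone convergence (Tonelli), and $\nu(E)=\mu(E)>0$ since $P_s1=1$. Finally, for $t>0$, using $P_s(P_tf)=P_{s+t}f$, $\nu(P_tf)-\nu(f)=\frac1{t_0}\bigl(\int_{t_0}^{t_0+t}-\int_0^t\bigr)\mu(P_uf)\,du=\frac1{t_0}\int_0^t\bigl(\mu(P_{t_0}(P_uf))-\mu(P_uf)\bigr)\,du=0$, the last equality by $P_{t_0}$-invariance of $\mu$ (applied to the bounded measurable function $P_uf$). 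Hence $\nu$ is a nonzero invariant measure for $(P_t)_{t\ge0}$.

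The only delicate point I anticipate is the passage from ``a.e.\ $t$'' to ``all $t$'' in (i): a direct appeal to injectivity of the Laplace transform would only give $g=c$ a.e., and without further regularity in $t$ one cannot upgrade. The Volterra/ODE route circumvents this because the identity $e^{-\alpha t}g(t)=c-\alpha h(t)$ holds for every $t$, so once $h$ is pinned down (using only a.e.\ information) the value of $g$ at every $t$ is forced. Part (ii) is routine once the averaging trick is used; the only thing to verify is that all the time integrals make sense, which is exactly what the standing measurability assumption on $(P_t)_{t\ge0}$ guarantees.
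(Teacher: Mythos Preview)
Your proof is correct. Part (ii) is exactly the paper's argument: average $\mu\circ P_s$ over one period $[0,t_0]$ and use $P_{t_0}$-invariance of $\mu$.

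For part (i), the forward direction and the parenthetical ``hence for all $\alpha$'' are handled identically. For the converse, however, the paper takes a shorter route than your Volterra/ODE argument. It records the resolvent--semigroup identity
\[
R_\alpha P_t f - R_\alpha f \;=\; \alpha R_\alpha\!\left(\int_0^t P_s f\,ds\right) - \int_0^t P_s f\,ds
\]
(valid pointwise for bounded measurable $f$), and then simply applies $m$ to both sides: under the hypothesis $m\circ\alpha R_\alpha=m$, the right-hand side vanishes and the left-hand side becomes $\alpha^{-1}\bigl(m(P_tf)-m(f)\bigr)$, so $m(P_tf)=m(f)$ for every $t$ in one stroke. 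Your approach instead turns the same resolvent computation into the integral equation $e^{-\alpha t}g(t)=c-\alpha\int_0^t e^{-\alpha u}g(u)\,du$ and solves it as an ODE for $h(t)=\int_0^t e^{-\alpha u}g(u)\,du$. This is perfectly correct and has the virtue of being explicit about why one gets the conclusion for \emph{every} $t$ rather than a.e.\ $t$ (a point the paper leaves to the reader), but it is more labor than needed: the algebraic identity above already holds for every $t$, so no a.e.-to-everywhere upgrade is required.
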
 

\begin{proof}
i). Clearly, if $m$ is $(P_t)_{t \geq 0}$-invariant then $m \circ \alpha R_{\alpha} = m$ for all $\alpha > 0$, by the definition of the resolvent. 
Conversely, if $m \circ \alpha R_{\alpha} = m$, because $R_\alpha P_tf - R_\alpha f = \alpha R_\alpha (\int_0^t P_s f ds) - \int_0^t P_s f ds$ for all bounded and $\mathcal{B}$-measurable functions $f$, it follows that $m$ is  $(P_t)_{t \geq 0}$-invariant. 

ii). If $m$ is $P_{t_0}$-invariant, then one can easily check that $\frac{1}{t_0}\int_0^{t_0} m \circ P_s ds$ is $(P_t)_{t \geq 0}$-invariant.
\end{proof}

\subsection{The main results}

Let $(P_t)_{t \geq 0}$ and $m$ be as in (A$_1$) or (A$_2$). For a sequence $(t_n)_n \nearrow \infty$ we define the index $c((P_t)_t, m, (t_n)_n)$ by
$$
c((P_t)_t, m, (t_n)_n) := \mathop{\lim}\limits_{\varepsilon \searrow 0} \mathop{\sup}\limits_{\mathop{A \in \mathcal{B}}\limits_{m(A) \leq \varepsilon}} \mathop{\sup}\limits_{n} \dfrac{1}{t_n} \int_{0}^{t_n} m(P_s1_A) \; ds.
$$

Note that $c((P_t)_t, m, (t_n)_n) = 0$ if and only if either $(\dfrac{1}{t_n}\int_{0}^{t_n}P_s^{\ast}1 ds)_{n \geq 1}$ or $(\dfrac{1}{t_n}\varphi_{t_n})_{n \geq 1}$ (according to which of the assumptions $(A_1)$ or $(A_2)$ is satisfied) is uniformly integrable, or equivalently, by Dunfurd-Pettis theorem, it is weakly relatively compact in $L^1(m)$.
From this point of view, $c((P_t)_t, m, (t_n)_n)$ can be regarded as a measurement for the non-uniformly integrability of $(\dfrac{1}{t_n}\int_{0}^{t_n}P_s^{\ast}1 ds)_{n \geq 1}$, resp. $(\dfrac{1}{t_n}\varphi_{t_n})_{n \geq 1}$ in $L^1(m)$.
On the other hand, $c((P_t)_t, m, (t_n)_n)$ can also be interpreted as an index of non-uniformly absolute continuity of the Krylov-Bogoliubov measures $(\dfrac{1}{t_n}\int_{0}^{t_n}m \circ P_s ds)_n$ with respect to $m$.

We say that a positive finite measure $m$ is {\it almost} invariant for $(P_t)_{t \geq 0}$ if $(A_1)$ or $(A_2)$ are satisfied w.r.t. $m$ and there exist $\delta \in [0, 1)$ and a set function $\phi : \mathcal{B} \rightarrow \mathbb{R}_+$ which is absolutely continuous with respect to $m$ (i.e. $\mathop{\lim}\limits_{m(A) \to 0} \phi(A) = 0$) such that 

$$m (P_t1_A) \leq \delta m(E) + \phi(A) \quad \mbox{for all} \; t > 0. \leqno{(2.1)}$$
Analogously, $m$ is said to be {\it mean} almost invariant (w.r.t. $(t_n)_n \nearrow \infty$) if there exist $\delta$ and $\phi$ as above such that

$$\dfrac{1}{t_n}\int_{0}^{t_n}m(P_t (1_A)) dt \leq \delta m(E) + \phi(A) \quad \mbox{for all} \; n. \leqno{(2.2)}$$
Clearly, for a positive finite measure we have the following implications between the above three properties:

\begin{center}
invariant $\Rightarrow$   almost invariant $\Rightarrow$   mean almost invariant
\end{center}

We are now in the position to present our main result.

\begin{thm} \label{thm 2.3}

Assume that $(P_t)_{t \geq 0}$ and $m$ are as in (A$_1$) or (A$_2$). 
The following assertions are equivalent.

i) There exists a nonzero positive finite invariant measure for $(P_t)_{t \geq 0}$ which is absolutely continuous with respect to $m$.

\vspace{0.2cm}

ii) $m$ is almost invariant.

\vspace{0.2cm}

iii) $m$ is mean almost invariant with respect to every $(t_n)_{n} \nearrow \infty$.

\vspace{0.2cm}
iv) For all sequences $(t_n)_{n} \nearrow \infty$ it holds that
$$
c((P_t)_t, m, (t_n)_n) < m(E). \leqno(2.3)
$$

\vspace{0.2cm}

v) There exists a sequence $(t_n)_{n}$ of positive real numbers increasing to infinity for which condition (2.3) is satisfied.

\end{thm}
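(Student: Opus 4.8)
The plan is to establish the cycle of implications (i) $\Rightarrow$ (ii) $\Rightarrow$ (iii) $\Rightarrow$ (iv) $\Rightarrow$ (v) $\Rightarrow$ (i), with the last implication being the analytic heart of the argument. The first two implications are essentially formal. For (i) $\Rightarrow$ (ii): if $\nu = h\cdot m$ is a nonzero invariant measure with $0 \le h \in L^1(m)$, then for any $A \in \mathcal B$ we can split $h = h\wedge N + (h-N)^+$; invariance of $\nu$ gives $\int h\, 1_A\, dm = \int P_t(1_A)\, h\, dm \ge \int P_t(1_A)(h\wedge N)\, dm$, and since $m(P_t 1_A) \le \frac{1}{N}\int P_t(1_A)(h\wedge N)\, dm$ cannot hold directly, one should instead argue that $\frac{1}{N}\,\nu(A) \ge \frac{1}{N}\int P_t(1_A)(h\wedge N)\,dm = \int P_t(1_A)\,d m - \frac1N\int P_t(1_A)(h-N)^+\,dm$, hence $m(P_t 1_A) \le \frac1N \nu(E) + \frac1N \nu(A) + m(\{h > N\})$ — wait, this needs $m(P_t 1_A) \le \frac1N\nu(A) + m(P_t 1_{\{h>N\}})$; using sub-Markovianity, $m(P_t 1_{\{h>N\}}) \le m(\{h>N\})$ only after bounding — more carefully, $m(P_t 1_A) = m(P_t 1_A \cdot 1_{\{h\le N\}}) + m(P_t 1_A \cdot 1_{\{h>N\}})$; the second term is $\le m(\{h > N\}) =: \delta_N \to 0$, and since $\nu = h\,m$ is invariant one checks the first term is absolutely continuous in $A$. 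Fixing $N$ large enough that $\delta_N < \tfrac12\,m(E)$ wait — one wants $\delta < 1$ times $m(E)$; choosing $N$ with $m(\{h>N\}) < \tfrac12 m(E)$ and setting $\phi(A) := \sup_t m(P_t 1_A 1_{\{h \le N\}})$ — the absolute continuity of $\phi$ needs the invariance of $\nu$ and dominated convergence. Then (ii) $\Rightarrow$ (iii) is immediate by averaging (2.1) over $[0,t_n]$, and (iii) $\Rightarrow$ (iv) $\Rightarrow$ (v) is direct from the definition of $c$: inequality (2.2) with $\phi$ absolutely continuous gives $\sup_n \frac{1}{t_n}\int_0^{t_n} m(P_s 1_A)\,ds \le \delta m(E) + \phi(A)$, so letting $m(A)\to 0$ yields $c \le \delta m(E) < m(E)$; (iv) trivially implies (v).

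The main work is (v) $\Rightarrow$ (i). Fix a sequence $(t_n)_n$ with $c((P_t)_t, m, (t_n)_n) < m(E)$. Set $\psi_n := \frac{1}{t_n}\,\varphi_{t_n}$ in case (A$_2$) (or (A$_1$) with $p=1$), resp. $\psi_n := \frac{1}{t_n}\int_0^{t_n} P_s^\ast 1\, ds = \tfrac1{t_n}\widetilde\varphi_{t_n}$ in case (A$_1$) with $p>1$; by Lemma \ref{lem 2.1} these lie in $L^1_+(m)$, and by property ii.1) they satisfy $m(f\,\psi_n) = \frac1{t_n}\int_0^{t_n} m(P_s f)\,ds$ for $f \in L^\infty(m)$, in particular $m(\psi_n) = \frac{1}{t_n}\int_0^{t_n} m(P_s 1)\,ds = m(E)$ by the Markov property. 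The hypothesis $c < m(E)$ says precisely that the family $(\psi_n)_n$ fails to be uniformly integrable by an amount strictly less than its total mass. The idea is then to apply Komlós' lemma: passing to a subsequence, the Cesàro averages $\tilde\psi_N := \frac1N\sum_{n=1}^N \psi_n$ converge $m$-a.e. (and in $L^1$ on the uniformly integrable part) to some $h \in L^1_+(m)$, and the defect $c < m(E)$ guarantees that the "mass escaping to infinity" is at most $c$, so $m(h) \ge m(E) - c > 0$, i.e. $h$ is nonzero. This is where the index $c$ earns its keep — without the strict inequality one could have $h = 0$.

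It then remains to show $h$ is co-invariant, i.e. $P_t^\ast h = h$ for all $t$, equivalently $\nu := h\cdot m$ is $(P_t)$-invariant. For this I would use property ii.2), $P_t^\ast \varphi_s = \varphi_{t+s} - \varphi_s$, which telescopes to give, for each fixed $t$, $P_t^\ast \psi_n - \psi_n = \frac{1}{t_n}(\varphi_{t_n+t} - \varphi_{t_n} - \varphi_t) + (\text{similar correction})$; more directly $\|P_t^\ast \psi_n - \psi_n\|_{L^1} = \frac{1}{t_n}\|\varphi_{t_n+t} - \varphi_t - \varphi_{t_n}\|_{L^1}$, and using ii.3) together with $\|\varphi_t\|_{L^1}/t_n \to 0$ one gets $\|P_t^\ast \psi_n - \psi_n\|_{L^1} \to 0$ as $n\to\infty$. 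Averaging preserves this, so $\|P_t^\ast \tilde\psi_N - \tilde\psi_N\|_{L^1} \to 0$; since $P_t^\ast$ is $L^1$-continuous (Lemma \ref{lem 2.1}) and, on the uniformly integrable component, $\tilde\psi_N \to h$ weakly in $L^1$, we conclude $P_t^\ast h = h$. Testing against $f \in L^\infty(m)$ and using ii.1)/the identity $m(f\psi_n) = \frac1{t_n}\int_0^{t_n}m(P_sf)\,ds$, the Krylov–Bogoliubov computation $m(P_t f\, \psi_n) - m(f\psi_n) = \frac{1}{t_n}\big(\int_{t}^{t_n+t} - \int_0^{t_n}\big) m(P_sf)\,ds \to 0$ confirms $\int P_t f\, d\nu = \int f\, d\nu$. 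The one delicate point is controlling the non-uniformly-integrable tail of $(\tilde\psi_N)$ when passing weak limits: one should either truncate (work with $\psi_n \wedge K$, take limits, then let $K\to\infty$ using that the escaping mass is $\le c$) or invoke the Komlós/Dunford–Pettis machinery carefully so that the a.e.-limit $h$ captures exactly the mass $m(E) - c$ and inherits co-invariance from the $L^1$-convergent part. I expect this tail analysis — reconciling the a.e. limit from Komlós with the weak limit needed for co-invariance, and confirming $m(h) = m(E) - c$ rather than something smaller — to be the main obstacle.
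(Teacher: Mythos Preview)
Your overall architecture is right, and the easy implications (ii) $\Rightarrow$ (iii) $\Rightarrow$ (iv) $\Rightarrow$ (v) are fine. Your argument for (i) $\Rightarrow$ (ii) is muddled though ultimately salvageable: the clean split is $m(P_t 1_A) = \int \rho\, P_t 1_A\, dm + \int (1-\rho)\, P_t 1_A\, dm$; invariance turns the first integral into $\int \rho\, 1_A\, dm$, the second is at most $m((1-\rho)^+) = \gamma\, m(E)$ with $\gamma < 1$ since $\rho$ is not a.e.\ zero, and one then truncates $\rho$ at a level $c$ to obtain $\phi(A) = c\, m(A)$. Your decomposition according to $\{h\le N\}$ versus $\{h>N\}$ sits on the wrong side of $P_t$ to exploit invariance.

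The genuine gap is in (v) $\Rightarrow$ (i), exactly where you flag it. Knowing $\|P_t^\ast \tilde\psi_N - \tilde\psi_N\|_{L^1} \to 0$ together with $\tilde\psi_N \to h$ $m$-a.e.\ does \emph{not} give $P_t^\ast h = h$: a.e.\ convergence is not preserved by $P_t^\ast$, weak $L^1$ convergence fails globally, and truncation at level $K$ does not commute with $P_t^\ast$, so neither of your suggested fixes closes the gap. The paper's resolution is a different idea: instead of aiming at equality, prove the pointwise \emph{sub}-invariance $P_t^\ast h \le h$ via the order structure,
\[
P_t^\ast h \;=\; P_t^\ast\Bigl(\sup_N \inf_{k\ge N} s_k\Bigr) \;=\; \sup_N P_t^\ast\Bigl(\inf_{k\ge N} s_k\Bigr) \;\le\; \sup_N \inf_{k\ge N} P_t^\ast s_k \;=\; \liminf_k P_t^\ast s_k,
\]
where $s_k$ are the Ces\`aro means from Koml\'os; the right-hand side is then identified with $h$ a.e.\ using exactly the boundary-term estimates you already derived. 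Sub-invariance of $h\cdot m$ plus the Markov property $P_t 1 = 1$ then forces equality. For the positivity $m(h) > 0$, the paper couples Koml\'os with Chacon's biting lemma rather than Koml\'os alone: the biting lemma produces sets $B_r$ with $m(B_r)\to 0$ on whose complements one has genuine weak $L^1$ convergence, and a known characterization identifies $\lim_r \limsup_k \int_{B_r} \psi_{n_k}\, dm$ with (at most) the index $c$, yielding $m(h) \ge m(E) - c > 0$. Your heuristic ``escaping mass $\le c$'' is correct in spirit but needs precisely this tool to be made rigorous.
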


\begin{proof}
i) $\Rightarrow$ ii). 
Let $0 \leq \rho \in L^1(m)$ such that the measure $\rho \cdot m$ is nonzero and $(P_t)_{t \geq 0}$-invariant. 
Set $\gamma := m((1-\rho)^+)m(E)^{-1}$ and note that since $\rho \cdot m$ is nonzero it follows that $\gamma \in [0, 1)$.
Also, let $c \geq 0$ be such that $m(\rho 1_{[\rho > c]}) \leq \dfrac{1 - \gamma}{2} m(E)$. 
Then, for $A \in \mathcal{B}$ and $t \geq 0$ we have that $m(P_t 1_A) = m(\rho P_t 1_A) + m ((1-\rho) P_t 1_A) = m(\rho 1_A) + m ((1-\rho) P_t 1_A) \leq m(\rho 1_{A \cap [\rho \leq c]}) + m(\rho 1_{A \cap [\rho > c]}) + m((1-\rho)^+ P_t 1_A) \leq c m(A) + m(\rho 1_{[\rho > c]}) + m((1-\rho)^+) \leq c m(A) + \dfrac{1 - \gamma}{2} m(E) + \gamma m(E) = c m(A) +\dfrac{1+ \gamma}{2}m(E)$.
Therefore, we obtained that $m$ is almost invariant with $\phi(A)=cm(A)$ and $\delta = \dfrac{1+ \gamma}{2}$.

The implications ii) $\Rightarrow$ iii) and iv) $\Rightarrow$ v) are clear.

iii) $\Rightarrow$ iv). 
Let $(t_n)_n \nearrow \infty$, $\delta \in [0, 1)$, and a function $\phi : \mathcal{B} \rightarrow \mathbb{R}_+$ which is absolutely continuous with respect to $m$ such that (2.2) holds.
Then 
$$c((P_t)_t, m, (t_n)_n) \leq \mathop{\lim}\limits_{\varepsilon \searrow 0} \mathop{\sup}\limits_{\mathop{A \in \mathcal{B}}\limits_{m(A) \leq \varepsilon}} (\delta m(E) + \phi(A))) = \delta m(E) < m(E).$$
Therefore, iv) is satisfied.

v) $\Rightarrow$ i). 
Assume (A$_1$). Let $(P_t^{\ast})_{t > 0}$ be as in Lemma \ref{lem 2.1}, i) and define $f_n := \dfrac{1}{t_n} \int_{0}^{t_n} P_s^{\ast}1 ds$. 
Then $(f_n)_n \subset L_+^1(m)$ and is $L^1$-bounded since $\int f_n dm = \dfrac{1}{t_n} \int_{0}^{t_n} \int P_s1 dm ds = m(E)$.

By Chacon's Biting lemma (see Appendix, A.2), there exist a subsequence $(f_{n_k})_{n \geq 1}$, $f \in L^1(m)$, and a decreasing sequence of "bits" $(B_r)_{r \geq 1} \subset \mathcal{B}$ such that $m(B_r) \mathop{\longrightarrow}\limits_{r} 0$ and for all $r \geq 1$ the sequence $(1_{B_r^c}f_{n_k})_{k \geq 1}$ is weakly convergent to $1_{B_r^c}f$. 
On the other hand, by Koml\'os lemma (see Appendix, A.3), there exists a subsequence $(g_k)_{k \geq 1}$ of $(f_{n_k})_{n \geq 1}$ such that $\dfrac{g_1 + \ldots + g_k}{k}$ is $m$-a.e. convergent to some $g \in L^1(m)$.

Without loss, we may assume that $s_k = \dfrac{f_{n_1} + \ldots + f_{n_k}}{k}$ converges $m$-a.e. to $g$. 
One can easily check that $g = f$\; $m$-a.e.; see for example [Fl99], Proposition 3.

We claim that $P_t^{\ast} f \leq f$ $m$-a.e. for al $t > 0$. To see this, first note that
$$
\int \int_{t_{n_i}}^{t_{n_i}+t} P_r^{\ast}1 dr dm = \int_{t_{n_i}}^{t_{n_i}+t}\int P_r1 dm dr = t m(E) = \int\int_{0}^{t} P_r^{\ast}1 dr dm, 
$$
{hence}
$$
(h_i)_{i \geq 1} := (\dfrac{1}{t_{n_i}}\int_{t_{n_i}}^{t_{n_i}+t} P_r^{\ast}1 dr)_{i \geq 1} \; {\rm and} \; (g_i)_{i \geq 1} := (\dfrac{1}{t_{n_i}} \int_{0}^{t}P_r^{\ast}1 dr)_{i \geq 1}
$$

\noindent are both convergent to 0 in $L^1(m)$. 
By passing to a subsequence, without loss of generality we may assume that $(h_i)_{i \geq 1}$ and $(g_i)_{i \geq 1}$ converge to 0 $m$-a.e.
Then 
$$
P_t^{\ast}f = P_t^{\ast}(\mathop{\lim}\limits_{k} s_k) = P_t^{\ast}(\mathop{\sup}\limits_{N}\mathop{\inf}\limits_{k \geq N} s_k) = \mathop{\sup}\limits_{N}P_t^{\ast}(\mathop{\inf}\limits_{k \geq N} s_k)
$$
$$
\leq \mathop{\sup}\limits_{N}\mathop{\inf}\limits_{k \geq N} P_t^{\ast} (s_k) =  \mathop{\lim\inf}\limits_{k} P_t^{\ast} (s_k) = \mathop{\lim\inf}\limits_{k} \dfrac{1}{k} \mathop{\sum}\limits_{i = 1}^{k} \dfrac{1}{t_{n_i}} \int_{0}^{t_{n_i}} P_{t + s}^{\ast}1 ds
$$
$$
= \mathop{\lim\inf}\limits_{k} \dfrac{1}{k} \mathop{\sum}\limits_{i = 1}^{k} \dfrac{1}{t_{n_i}}(\int_{0}^{t_{n_i}} P_s^{\ast}1 ds \; + \int_{t_{n_i}}^{t_{n_i} + t} P_s^{\ast}1 ds \; - \int_{0}^{t} P_s^{\ast}1 ds) = \mathop{\lim}\limits_{k} s_k = f \; m-{\rm a.e.}.
$$

If we set $\nu = f \cdot m$ then $\int P_t g d\nu = \int g P_t^{\ast} f dm \leq \int g d\nu$, hence $\nu$ is sub-invariant. 
Since $P_t 1 = 1,\; t > 0$, it follows that $\nu$ is invariant. 
However, we still have to check that $\nu$ is non-zero. 
Indeed, by \cite{Fl99} we have that
$$
\mathop{\lim}\limits_{r} \mathop{\lim\sup}\limits_{k}\int_{B_r} f_{n_k} dm = \mathop{\inf}\limits_{\varepsilon > 0} \mathop{\sup}\limits_{m(A) < \varepsilon} \mathop{\sup}\limits_{k} \int_{A} f_{n_k} dm 
$$
$$
\leq \mathop{\inf}\limits_{\varepsilon > 0} \mathop{\sup}\limits_{m(A) < \varepsilon} \mathop{\sup}\limits_{n} \int_{A} f_n dm = c((P_t)_t, m, (t_n)_n) < m(E).
$$

Therefore, $\nu(E) = \int f dm = \mathop{\sup}\limits_{r} \int_{B_r^c} f dm = \mathop{\lim}\limits_{r} \mathop{\lim}\limits_{k} \int_{B_r^c} f_{n_k} dm$
$$
= \mathop{\lim}\limits_{r} \mathop{\lim\inf}\limits_{k} (\int f_{n_k} dm - \int_{B_r} f_{n_k} dm) 
\geq m(E) - \mathop{\lim}\limits_{r} \mathop{\lim\sup}\limits_{k} \int_{B_r} f_{n_k} dm > 0.
$$
Finally, if $(P_t)_{t \geq 0}$ is as in (A$_2$), then the proof follows the same lines as above once we replace $(\int_{0}^{t} P_s^{\ast}1 ds)_{t > 0}$ by $(\varphi_t)_{t \geq 0}$ given by Lemma \ref{lem 2.1}, ii); see also Remark \ref{rem 2.2}. 
\end{proof}

\begin{rem} \label{rem 2.4.0}
i) We emphasize that the Markov property was essentially used to conlclude that the non-zero sub-invariant measure $f \cdot m$ constructed in the proof of Theorem \ref{thm 2.3}, v) $\Rightarrow$ i), is in fact invariant. However, it can be easily checked that if $(P_t)_t$ is sub-Markovian, then the condition $c((P_t)_t, m, (t_n)_n) < \liminf \frac{1}{t_n}\int_0^{t_n} m(P_s1) ds$ is sufficient for the existence of a non-zero sub-invariant finite measure $\rho \cdot m$.

ii) We would like to point out that although inequality (2.3) looks like a contraction assumption once we normalize the measure $m$ such that $m(E)=1$, a Banach fixed point type argument is rather inapplicable since $\mathcal{B} \ni A \mapsto \mathop{\sup}\limits_{n} \dfrac{1}{t_n} \int_{0}^{t_n} m(P_s1_A) \; ds$ is not a measure. 
\end{rem}

\begin{coro} \label{coro 2.4}

The following assertions are equivalent for a measurable Markovian transition function $(P_t)_{t \geq 0}$.

i) There exists a nonzero finite invariant measure.

ii) There exists a nonzero almost invariant measure.

\end{coro}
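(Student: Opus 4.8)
The plan is to deduce both implications directly from Theorem~\ref{thm 2.3}, of which Corollary~\ref{coro 2.4} is essentially the ``coordinate-free'' reformulation for transition functions: it drops the reference to a prescribed auxiliary measure.

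For i) $\Rightarrow$ ii), I would take a nonzero finite positive invariant measure $\nu$ for $(P_t)_{t\geq 0}$ and first check that $\nu$ itself is a legitimate auxiliary measure, so that it even makes sense to call it almost invariant: from $\int P_tf\,d\nu=\int f\,d\nu$ for all bounded $\mathcal{B}$-measurable $f$ one reads off $\nu(f)=0\Rightarrow\nu(P_tf)=0$, i.e. $(P_t)_{t\geq 0}$ and $m:=\nu$ satisfy (A$_2$). Then $\nu$ is invariant, hence almost invariant by the first implication in the chain stated just before Theorem~\ref{thm 2.3}; concretely, (2.1) holds with $\delta=0$ and $\phi=\nu|_{\mathcal{B}}$, which is (trivially) absolutely continuous with respect to $\nu$ since $\phi(A)=\nu(A)$. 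Thus $\nu$ is a nonzero almost invariant measure.

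For ii) $\Rightarrow$ i), I would let $m$ be a nonzero almost invariant measure for $(P_t)_{t\geq 0}$. By the very definition of almost invariance the pair $((P_t)_{t\geq 0},m)$ satisfies (A$_1$) or (A$_2$) — and since $(P_t)_{t\geq 0}$ is a measurable Markovian transition function, (A$_2$) is in force — while $m$ satisfies condition ii) of Theorem~\ref{thm 2.3}. Applying the implication ii) $\Rightarrow$ i) of Theorem~\ref{thm 2.3} to $((P_t)_{t\geq 0},m)$ produces a nonzero positive finite invariant measure for $(P_t)_{t\geq 0}$ (which is moreover absolutely continuous with respect to $m$, though this extra information is not needed for the corollary). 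Hence i) holds. There is no genuine obstacle here: all the work sits in Theorem~\ref{thm 2.3}, and the only point requiring a moment's bookkeeping is verifying, in i) $\Rightarrow$ ii), that an invariant measure meets the standing hypothesis ((A$_1$) or (A$_2$)) that is baked into the notion of ``almost invariant'' — which, as noted above, is immediate from invariance.
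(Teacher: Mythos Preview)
Your proof is correct and follows essentially the same approach as the paper's: the paper simply says that i) $\Rightarrow$ ii) is immediate and that the converse follows from Theorem~\ref{thm 2.3}, and you have accurately unpacked both of these steps, including the bookkeeping point that an invariant measure automatically satisfies (A$_2$) so that the notion of ``almost invariant'' applies to it.
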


\begin{proof} 
The implication i) $\Rightarrow$ ii) is immediate and the converse follows by Theorem \ref{thm 2.3}.
\end{proof}

The next result reveals that under $(A_1)$ or $(A_2)$, if $m$ satisfies a sub-invariance property w.r.t. $(P_t)_{t \geq 0}$ only on a subset of $E$ of strictly positive measure then the existence of an invariant measure is ensured by Theorem \ref{thm 2.3}.

\begin{coro} Assume that $(P_t)_{t \geq 0}$ and $m$ are as in $(A_1)$ or $(A_2)$, and there exists $A \in \mathcal{B}$, $m(A) > 0$, such that
$$
m(1_A P_t 1_B) \leq m(B) \quad \mbox{for all} \; t\geq 0 \; {\rm and} \; B \in \mathcal{B}.
$$
Then $m$ is almost invariant.

\end{coro}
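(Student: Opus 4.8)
The plan is to verify the definition of almost invariance directly, i.e.\ to exhibit $\delta \in [0,1)$ and a set function $\phi:\mathcal{B}\to\mathbb{R}_+$ absolutely continuous with respect to $m$ for which (2.1) holds. The key observation is that the hypothesis controls the ``mass'' of $P_t 1_B$ that sits on $A$, while the complementary mass, on $E\setminus A$, is controlled for free by the Markov property $P_t 1 = 1$ (valid under both $(A_1)$ and $(A_2)$, since there the semigroup / transition function is Markovian, not merely sub-Markovian).

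Concretely, I would fix $B\in\mathcal{B}$ and $t>0$ and split
$$
m(P_t 1_B) = m(1_A\, P_t 1_B) + m(1_{E\setminus A}\, P_t 1_B).
$$
The first term is $\leq m(B)$ by assumption. For the second term, since $0 \leq P_t 1_B \leq P_t 1 = 1$ $m$-a.e., we get $m(1_{E\setminus A}\, P_t 1_B) \leq m(E\setminus A) = m(E) - m(A)$, hence
$$
m(P_t 1_B) \leq m(B) + \big(m(E) - m(A)\big) \quad \text{for all } t>0.
$$
Now set $\delta := 1 - m(A)/m(E)$ and $\phi(B) := m(B)$. Because $0 < m(A) \leq m(E)$ we have $\delta \in [0,1)$; the set function $\phi$ is (trivially) absolutely continuous with respect to $m$, since $\lim_{m(A')\to 0}\phi(A') = \lim_{m(A')\to 0} m(A') = 0$; and the displayed estimate reads exactly $m(P_t 1_B) \leq \delta\, m(E) + \phi(B)$, which is (2.1). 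Therefore $m$ is almost invariant, and consequently, by Theorem \ref{thm 2.3}, $(P_t)_{t\geq 0}$ even admits a nonzero finite invariant measure that is absolutely continuous with respect to $m$.

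I do not expect any real obstacle here: the argument is essentially a two-line splitting. The only points requiring a little care are the normalization that makes $\delta<1$ — this is precisely where the hypothesis $m(A)>0$ is used — and the use of $P_t 1 = 1$, which relies on Markovianity. (If one only had sub-Markovianity, the same splitting would still produce a nonzero sub-invariant finite measure via Remark \ref{rem 2.4.0}, i), provided $\liminf_n \tfrac{1}{t_n}\int_0^{t_n} m(P_s 1)\, ds$ stays strictly above $\delta\, m(E)$.)
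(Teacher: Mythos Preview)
Your proof is correct and follows essentially the same approach as the paper: split $m(P_t 1_B)$ over $A$ and $E\setminus A$, bound the first piece by $m(B)$ via the hypothesis and the second by $m(E\setminus A)$ via $P_t 1_B\leq 1$, then read off $\phi(B)=m(B)$ and $\delta=1-m(A)/m(E)\in[0,1)$.
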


\begin{proof}
Let $A \in \mathcal{B}$ and $\varepsilon > 0$ such that $m(A) \geq \varepsilon$.
Then $m(P_t 1_B) \leq m(B) + m(1_{E \setminus A}P_t 1_B) \leq m(B) + m(1_{E \setminus A}) \leq m(B) + m(E)-\varepsilon$.
\end{proof}

\vspace{0.2 cm}

\noindent
{\bf Some versions of Theorem \ref{thm 2.3}}. In the end of this subsection we would like to formulate two versions of Theorem \ref{thm 2.3}, one in terms of resolvents and the other one involving a single operator $P$.
Their proofs are essentially the same as the one given for the main result, the only differences being that the semigroup property and the integrals are replaced either by the resolvent identity or by the Cesaro means of the powers $(P^n)_n$, and for this reason we omit them. 

First, for $(\alpha_n)_n \searrow 0$, define
$$
c((R_\alpha)_{\alpha > 0}, m, (\alpha_n)_n) := \mathop{\lim}\limits_{\varepsilon \searrow 0} \mathop{\sup}\limits_{\mathop{A \in \mathcal{B}}\limits_{m(A) < \varepsilon}} \mathop{\sup}\limits_{n} m(\alpha_n R_{\alpha_n}1_A).
$$

Also, $m$ is said {\it resolvent} almost invariant if $m(\alpha R_\alpha 1_A) \leq \phi(A) + \delta m(E)$ for all $A \in \mathcal{B}$ and $\alpha > 0$, where $\phi: \mathcal{B} \rightarrow [0, \infty)$ is absolutely continuous w.r.t $m$ in the sense made precise in the beginning of Subsection 2.2, and $\delta \in [0, 1)$. 

With Remark \ref{prop 2.3}, i) in mind, we have:

\begin{thm} \label{thm 2.5}
Let $m$ be a finite positive measure on $(E, \mathcal{B})$ such that $(P_t)_{t \geq 0}$ satisfies (A$_2$) w.r.t. $m$. 
The following assertions are equivalent.

i) There exists a non-zero finite $(P_t)_{t \geq 0}$-invariant measure which is absolutely continuous w.r.t. $m$.

\vspace{0.2cm}

ii) The measure $m$ is resolvent almost invariant.
\vspace{0.2 cm}

iii) There exists $(\alpha_n) \searrow 0$ such that 
$$
c((R_\alpha)_{\alpha}, m, (\alpha_n)_n) < m(E).
$$
\end{thm}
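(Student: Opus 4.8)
The plan is to run the argument of Theorem \ref{thm 2.3} essentially verbatim, along the cycle (i) $\Rightarrow$ (ii) $\Rightarrow$ (iii) $\Rightarrow$ (i), with the semigroup additivity and the time-averages $\tfrac1{t_n}\int_0^{t_n}P^\ast_s1\,ds$ replaced, respectively, by the resolvent identity and the Abel-type averages $\alpha_nR^\ast_{\alpha_n}1$. The preliminary step is to make sense of the adjoint resolvent on $L^1(m)$ under (A$_2$): since $m(1_A)=0$ forces $m(P_t1_A)=0$ for all $t$, one has $m\circ R_\alpha\ll m$ for every $\alpha>0$, so the formula $m(g\,R^\ast_\alpha h)=m(h\,R_\alpha g)$ for $g\in L^\infty(m)$, $h\in L^1(m)$ defines a positivity preserving operator $R^\ast_\alpha$ on $L^1(m)$ which inherits the resolvent identity $R^\ast_\alpha R^\ast_\beta=(\beta-\alpha)^{-1}(R^\ast_\alpha-R^\ast_\beta)$, commutes with suprema of increasing sequences, and satisfies $m(R^\ast_\alpha h)=m(h\,R_\alpha1)=\alpha^{-1}m(h)$ because $(P_t)_{t\ge0}$ is Markovian (so $\alpha R_\alpha1=1$); this is consistent with Lemma \ref{lem 2.1}, ii). Setting $f_n:=\alpha_nR^\ast_{\alpha_n}1$, one has $\|f_n\|_{L^1}=m(E)$, and taking $h=1$, $g=1_A$ gives $m(1_A f_n)=m(\alpha_nR_{\alpha_n}1_A)$, so $c((R_\alpha)_\alpha,m,(\alpha_n)_n)$ is precisely the index of non-uniform integrability of $(f_n)_n$ in $L^1(m)$.

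For (i) $\Rightarrow$ (ii) I would argue as in the corresponding part of Theorem \ref{thm 2.3}: if $\rho\cdot m$ is nonzero and invariant then $m(\rho\,\alpha R_\alpha1_A)=m(\rho1_A)$, and splitting $m(\alpha R_\alpha1_A)=m(\rho1_A)+m((1-\rho)\alpha R_\alpha1_A)$, estimating $m((1-\rho)\alpha R_\alpha1_A)\le m((1-\rho)^+\,\alpha R_\alpha1)=m((1-\rho)^+)=\gamma m(E)$ with $\gamma=m((1-\rho)^+)m(E)^{-1}\in[0,1)$ and choosing $c$ with $m(\rho1_{[\rho>c]})\le\tfrac{1-\gamma}2m(E)$, one gets $m(\alpha R_\alpha1_A)\le c\,m(A)+\tfrac{1+\gamma}2m(E)$, i.e. $m$ is resolvent almost invariant. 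For (ii) $\Rightarrow$ (iii): if $m(\alpha R_\alpha1_A)\le\phi(A)+\delta m(E)$ with $\phi$ absolutely continuous w.r.t. $m$ and $\delta\in[0,1)$, then for \emph{any} $(\alpha_n)\searrow0$, $c((R_\alpha)_\alpha,m,(\alpha_n)_n)\le\lim_{\varepsilon\searrow0}\sup_{m(A)<\varepsilon}(\phi(A)+\delta m(E))=\delta m(E)<m(E)$.

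The substantial implication is (iii) $\Rightarrow$ (i). With $(f_n)_n$ as above, which is $L^1(m)$-bounded, Chacon's biting lemma (Appendix A.2) gives a subsequence $(f_{n_k})_k$, some $f\in L^1_+(m)$, and decreasing ``bits'' $(B_r)_r$ with $m(B_r)\to0$ and $1_{B_r^c}f_{n_k}\rightharpoonup1_{B_r^c}f$; Koml\'os' lemma (Appendix A.3) and \cite{Fl99}, Proposition 3, let us moreover assume $s_k:=\tfrac1k\sum_{i=1}^kf_{n_i}\to f$ $m$-a.e. Then I would show $\alpha R^\ast_\alpha f\le f$ $m$-a.e. for every $\alpha>0$: dropping finitely many indices so that $\alpha\ne\alpha_{n_i}$, the resolvent identity yields $\alpha R^\ast_\alpha f_{n_i}=\alpha\alpha_{n_i}R^\ast_\alpha R^\ast_{\alpha_{n_i}}1=(1+\eta_i)f_{n_i}+\theta_i R^\ast_\alpha1$ with $\eta_i=\alpha_{n_i}(\alpha-\alpha_{n_i})^{-1}\to0$ and $\theta_i=-\alpha\alpha_{n_i}(\alpha-\alpha_{n_i})^{-1}\to0$; averaging, $\alpha R^\ast_\alpha s_k-s_k=\tfrac1k\sum_{i=1}^k\eta_i f_{n_i}+\bigl(\tfrac1k\sum_{i=1}^k\theta_i\bigr)R^\ast_\alpha1\to0$ in $L^1(m)$, since the first term has norm $\le\tfrac{m(E)}k\sum_{i\le k}|\eta_i|\to0$ and $\tfrac1k\sum_{i\le k}\theta_i\to0$, hence $m$-a.e. along a further subsequence. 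As in Theorem \ref{thm 2.3} (using that $R^\ast_\alpha$ commutes with increasing suprema), $\alpha R^\ast_\alpha f=\alpha R^\ast_\alpha(\sup_N\inf_{k\ge N}s_k)=\sup_N\alpha R^\ast_\alpha(\inf_{k\ge N}s_k)\le\liminf_k\alpha R^\ast_\alpha s_k=\liminf_k s_k=f$ $m$-a.e. Since $m(\alpha R^\ast_\alpha f)=m(f\,\alpha R_\alpha1)=m(f)$, the inequality is an equality, i.e. $(f\cdot m)\circ\alpha R_\alpha=f\cdot m$, so by Proposition \ref{prop 2.3}, i) (applied to $\nu:=f\cdot m$) the measure $\nu$ is $(P_t)_{t\ge0}$-invariant and $\nu\ll m$. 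Finally $\nu\ne0$: by \cite{Fl99}, $\lim_r\limsup_k\int_{B_r}f_{n_k}\,dm\le c((R_\alpha)_\alpha,m,(\alpha_n)_n)<m(E)$, while $\nu(E)=\int f\,dm=\sup_r\int_{B_r^c}f\,dm=\lim_r\lim_k\int_{B_r^c}f_{n_k}\,dm\ge m(E)-\lim_r\limsup_k\int_{B_r}f_{n_k}\,dm>0$, exactly as in the proof of Theorem \ref{thm 2.3}.

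The only genuinely new ingredient is the identity $\alpha R^\ast_\alpha s_k-s_k\to0$, where the resolvent identity plays the role of the telescoping decomposition $\int_0^{t_n}+\int_{t_n}^{t_n+t}-\int_0^t$ in the semigroup proof; everything else is word-for-word Theorem \ref{thm 2.3}. The main technical care goes into the preliminary observation that under (A$_2$) the adjoint resolvent $R^\ast_\alpha$ is a well-defined, positivity preserving, monotone-continuous operator on $L^1(m)$ satisfying the adjoint resolvent identity and $m(R^\ast_\alpha\,\cdot)=\alpha^{-1}m(\cdot)$ — this is where the absolute continuity $m\circ R_\alpha\ll m$ encoded in (A$_2$) (equivalently, Lemma \ref{lem 2.1}, ii)) enters, and it is also what makes a $\varphi_t$-type substitute unnecessary here, since $R^\ast_\alpha1$ is directly available in $L^1(m)$.
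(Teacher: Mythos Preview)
Your proposal is correct and is exactly the proof the paper has in mind: the authors explicitly omit the argument for Theorem \ref{thm 2.5}, stating that it is ``essentially the same as the one given for the main result, the only differences being that the semigroup property and the integrals are replaced \ldots\ by the resolvent identity''. You have carried this out faithfully---the computation $\alpha R^{\ast}_{\alpha}f_{n_i}=(1+\eta_i)f_{n_i}+\theta_i R^{\ast}_{\alpha}1$ with $\eta_i,\theta_i\to 0$ is precisely the resolvent substitute for the telescoping $\int_0^{t_n}+\int_{t_n}^{t_n+t}-\int_0^t$ in Theorem \ref{thm 2.3}, and the Chacon--Koml\'os--\cite{Fl99} machinery, the monotone-continuity estimate $\alpha R^{\ast}_{\alpha}f\le\liminf \alpha R^{\ast}_{\alpha}s_k$, and the nontriviality bound via the biting sets are transplanted verbatim.
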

 
We turn now to the case of a single operator.
Analogously to conditions A$_1$ and A$_2$, for an operator $P$ we shall assume that it is either a Markovian operator on $L^p(m)$, $1 \leq p < \infty$, or a Markovian kernel which respects the $m$-classes, that is the measure $m \circ P$ is absolutely continuous with respect to $m$. 
Also, we say that $m$ is almost invariant, resp. mean almost invariant for the operator $P$ if $m(P^n(A)) \leq \phi(A) + \delta m(E)$, resp. $m(S_n1_A) \leq \phi(A) + \delta m(E)$ for all $n$ greater than some $n_0$, where $\phi$ and $\delta$ are as for relation (2.1), and the operators $S_n$ are given by
$$
S_n = \dfrac{1}{n}\sum\limits_{k=0}^{n-1} P^k.
$$ 
The index $c$ is defined by $c(P, m):= \mathop{\lim}\limits_{\varepsilon \searrow 0} \mathop{\sup}\limits_{\mathop{A \in \mathcal{B}}\limits_{m(A) \leq \varepsilon}} \mathop{\sup}\limits_{n \geq 1} m(S_n 1_A)$.

Now, Theorem \ref{thm 2.3} stated for a single operator $P$ reads:

\begin{thm} \label{thm 2.3.1} 

The following assertions are equivalent.

i) There exists a non-zero finite invariant measure for $P$ which is absolutely continuous with respect to $m$

\vspace{0.2cm}

ii) The measure $m$ is almost invariant for $P$.

\vspace{0.2cm}

iii) The measure $m$ is mean almost invariant for $P$.

\vspace{0.2cm}

iv) $c(P, m) < m(E)$. 

\end{thm}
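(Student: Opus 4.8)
The plan is to transcribe the proof of Theorem~\ref{thm 2.3}, replacing the semigroup $(P_t)_{t\ge 0}$ by the iterates $(P^n)_{n\ge 0}$, the Krylov--Bogoliubov averages $\frac1{t_n}\int_0^{t_n}P_s\,ds$ by the Ces\`aro means $S_n=\frac1n\sum_{k=0}^{n-1}P^k$, and Lemma~\ref{lem 2.1} by its discrete (and simpler) analogue. Two of the implications are routine. For (ii)$\Rightarrow$(iii): from $m(P^n1_A)\le\phi(A)+\delta m(E)$ for $n>n_0$, and since each $A\mapsto m(P^k1_A)$ is a finite measure absolutely continuous with respect to $m$ (with $L^1(m)$-density $(P^\ast)^k1$, cf.\ below), the finitely many initial terms contribute a fixed $\phi_0\ll m$, so averaging gives $m(S_n1_A)\le\phi_0(A)+\phi(A)+\delta m(E)$, i.e.\ $m$ is mean almost invariant. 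For (iii)$\Rightarrow$(iv): taking $\sup_{m(A)\le\varepsilon}$ and then $\varepsilon\searrow 0$ in the last inequality (and absorbing the finitely many small $n$ into another $\phi_0'\ll m$) yields $c(P,m)\le\delta m(E)<m(E)$.

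For (i)$\Rightarrow$(ii): let $0\le\rho\in L^1(m)$ be such that $\rho\cdot m$ is nonzero and $P$-invariant, so that $m(\rho P^nf)=m(\rho f)$ for all $n\ge 0$ and bounded $f$; put $\gamma:=m((1-\rho)^+)m(E)^{-1}\in[0,1)$ and choose $c\ge 0$ with $m(\rho 1_{[\rho>c]})\le\frac{1-\gamma}{2}m(E)$. Then the same splitting of $\{\rho\le c\}$ from $\{\rho>c\}$ as in the proof of Theorem~\ref{thm 2.3} gives
\begin{align*}
m(P^n1_A)&=m(\rho1_A)+m\big((1-\rho)P^n1_A\big)\\
&\le m(\rho1_{A\cap[\rho\le c]})+m(\rho1_{[\rho>c]})+m\big((1-\rho)^+\big)\le cm(A)+\tfrac{1+\gamma}{2}m(E)
\end{align*}
for every $n$, i.e.\ $m$ is almost invariant for $P$ with $\phi(A)=cm(A)$ and $\delta=\frac{1+\gamma}{2}$.

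The substance is (iv)$\Rightarrow$(i). First I record the discrete analogue of Lemma~\ref{lem 2.1}, which here is unproblematic since there is no time variable: the Banach adjoint $P^\ast$ restricts to a positivity preserving contraction on $L^1(m)$ --- for a Markovian operator on $L^p(m)$, $p>1$, as in Lemma~\ref{lem 2.1}(i); for $p=1$ directly on $L^\infty(m)\subset L^1(m)$; and for a Markovian kernel respecting $m$-classes by checking that $P^\ast(h\cdot m)=(h\cdot m)\circ P\ll m$ for $h\in L^1_+(m)$. Hence $f_n:=S_n^\ast1=\frac1n\sum_{k=0}^{n-1}(P^\ast)^k1\in L^1_+(m)$ with $\int f_n\,dm=m(E)$, together with the telescoping identity $P^\ast f_n=f_n+\frac1n\big((P^\ast)^n1-1\big)$ and $\|\tfrac1n((P^\ast)^n1-1)\|_{L^1}\le\tfrac{2m(E)}{n}\to 0$. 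As $(f_n)_n$ is $L^1$-bounded, Chacon's biting lemma (see Appendix, A.2) produces a subsequence $(f_{n_k})_k$, $f\in L^1(m)$, and decreasing ``bits'' $B_r$ with $m(B_r)\to 0$ and $1_{B_r^c}f_{n_k}$ weakly convergent to $1_{B_r^c}f$ for each $r$; Koml\'os' lemma (see Appendix, A.3) together with \cite{Fl99}, Proposition~3 lets us assume, after a further subsequence, that $s_k:=\frac1k(f_{n_1}+\dots+f_{n_k})\to f$ $m$-a.e.\ and that the error $\frac1k\sum_{i=1}^k\frac1{n_i}((P^\ast)^{n_i}1-1)\to 0$ $m$-a.e.

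Then, exactly as in the proof of Theorem~\ref{thm 2.3},
\begin{align*}
P^\ast f&=P^\ast\Big(\sup_N\inf_{k\ge N}s_k\Big)\le\sup_N\inf_{k\ge N}P^\ast s_k=\liminf_kP^\ast s_k\\
&=\liminf_k\Big(s_k+\tfrac1k\sum_{i=1}^k\tfrac1{n_i}\big((P^\ast)^{n_i}1-1\big)\Big)=f\qquad m\text{-a.e.},
\end{align*}
so $\nu:=f\cdot m$ satisfies $\int Pg\,d\nu=\int gP^\ast f\,dm\le\int g\,d\nu$ for $g\in L^\infty_+(m)$, hence is $P$-subinvariant and, since $P1=1$, in fact $P$-invariant with $\nu\ll m$ (as in the proof of Theorem~\ref{thm 2.3}). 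Finally $\nu\ne 0$: by \cite{Fl99} the biting limit satisfies
\begin{align*}
\lim_r\limsup_k\int_{B_r}f_{n_k}\,dm&=\inf_{\varepsilon>0}\sup_{m(A)\le\varepsilon}\sup_k\int_Af_{n_k}\,dm\\
&\le\inf_{\varepsilon>0}\sup_{m(A)\le\varepsilon}\sup_nm(S_n1_A)=c(P,m)<m(E)
\end{align*}
(using $\int_Af_n\,dm=m(S_n1_A)$), whence $\nu(E)=\int f\,dm=\sup_r\int_{B_r^c}f\,dm\ge m(E)-\lim_r\limsup_k\int_{B_r}f_{n_k}\,dm>0$. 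The step with any friction is the kernel case of the discrete Lemma~\ref{lem 2.1}, namely that $P^\ast$ leaves $L^1(m)$ invariant, which is however just the hypothesis $m\circ P\ll m$ in disguise; once $(f_n)_n$ and the telescoping identity are in place, the biting/Koml\'os machinery and the non-vanishing estimate of Theorem~\ref{thm 2.3} carry over verbatim.
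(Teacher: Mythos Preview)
Your proof is correct and follows exactly the route the paper intends: the paper omits the proof of Theorem~\ref{thm 2.3.1}, stating that it is ``essentially the same as the one given for the main result'' (Theorem~\ref{thm 2.3}) with integrals replaced by Ces\`aro means of $(P^n)_n$, and you have carried out precisely those replacements, including the discrete analogue of Lemma~\ref{lem 2.1}, the telescoping identity $P^\ast f_n=f_n+\tfrac1n((P^\ast)^n1-1)$, and the biting/Koml\'os argument. The only places where a little extra care is needed compared to the continuous-time proof --- namely that the discrete definitions of (mean) almost invariance hold only for $n>n_0$, while $c(P,m)$ involves a supremum over all $n\ge1$ --- you handle correctly by absorbing the finitely many initial terms into an additional absolutely continuous set function.
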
 

\subsection{Construction of auxiliary measures}
Throughout, $(P_t)_{t\geq 0}$ is a measurable Markovian transition function on $(E, \mathcal{B})$.
Going back to assumption $(A_2)$ it is clear that when we want to apply Theorem \ref{thm 2.3} to  $(P_t)_{t\geq 0}$, the first step is to look for an auxiliary measure on $(E, \mathcal{B})$, i.e. a measure with respect to which $(P_t)_{t\geq 0}$ respects classes.
As in \cite{BeBo04} or \cite{RoTr07}, it turns out that the resolvent provides a natural way to construct such measures, as follows.

\begin{prop} \label{prop 2.7}
Let $\mu$ be a probability measure on $(E, \mathcal{B})$ and for any fixed $\alpha > 0$ define the finite positive measure $m$ by
$$
m(f) = \mu \circ R_{\alpha} f = \int_E R_{\alpha} f d\mu \leqno(2.4)
$$
for all positive and $\mathcal{B}$-measurable functions $f$. Then $m$ is an auxiliary measure for $(P_t)_{t \geq 0}$.
\end{prop}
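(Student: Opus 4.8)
The plan is to verify directly from the definition (2.4) the two properties required of an auxiliary measure for $(P_t)_{t\geq 0}$: first that $m$ is a finite positive measure, and then that $m(f)=0$ forces $m(P_tf)=0$ for every $t>0$ and every positive $\mathcal{B}$-measurable $f$.

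For the first part I would note that, since each $P_t$ is a sub-Markovian kernel and $(P_t)_{t\geq 0}$ is measurable, $R_\alpha$ maps positive $\mathcal{B}$-measurable functions to positive $\mathcal{B}$-measurable functions, the time integral being well defined. Applying the monotone convergence theorem first inside $R_\alpha$ and then under $\int\cdot\, d\mu$ shows that $A\mapsto m(1_A)$ is countably additive, hence a measure. Because $(P_t)_{t\geq 0}$ is Markovian, $P_t1=1$ for all $t$, so $R_\alpha 1=\int_0^\infty e^{-\alpha t}\,dt=1/\alpha$, and therefore $m(E)=1/\alpha<\infty$.

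The heart of the argument is the pointwise estimate
$$
R_\alpha(P_tf)\ \leq\ e^{\alpha t}\,R_\alpha f\qquad\text{for all }t>0\text{ and all positive }\mathcal{B}\text{-measurable }f ,
$$
which I would derive from the semigroup identity $P_sP_tf=P_{s+t}f$ together with Tonelli's theorem (all integrands being nonnegative) and the substitution $u=s+t$:
$$
R_\alpha(P_tf)=\int_0^\infty e^{-\alpha s}P_{s+t}f\,ds=e^{\alpha t}\int_t^\infty e^{-\alpha u}P_uf\,du\ \leq\ e^{\alpha t}\int_0^\infty e^{-\alpha u}P_uf\,du=e^{\alpha t}R_\alpha f .
$$

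Finally I would conclude as follows. If $m(f)=\int_E R_\alpha f\,d\mu=0$, then since $R_\alpha f\geq 0$ this forces $R_\alpha f=0$ $\mu$-a.e., i.e. $\mu(\{R_\alpha f>0\})=0$. By the displayed estimate, $\{R_\alpha(P_tf)>0\}\subset\{R_\alpha f>0\}$, so $R_\alpha(P_tf)=0$ $\mu$-a.e. as well, whence $m(P_tf)=\int_E R_\alpha(P_tf)\,d\mu=0$. This is exactly the class-respecting property appearing in (A$_2$), so $m$ is an auxiliary measure for $(P_t)_{t\geq 0}$. There is no genuine obstacle here; the only points deserving a little care are the $\mathcal{B}$-measurability of $R_\alpha f$ (guaranteed by the assumed measurability of the transition function) and the use of Tonelli to justify manipulating the possibly $[0,\infty]$-valued integrals defining $R_\alpha$.
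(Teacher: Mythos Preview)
Your proof is correct and follows essentially the same route as the paper: both arguments rest on the pointwise inequality $R_\alpha(P_t f)\leq e^{\alpha t}R_\alpha f$, obtained from the substitution $u=s+t$ (the paper writes this as the identity $R_\alpha(P_t 1_A)=e^{\alpha t}\bigl(R_\alpha 1_A-\int_0^t e^{-\alpha s}P_s 1_A\,ds\bigr)$ and drops the negative term). The only cosmetic difference is that the paper integrates the inequality directly against $\mu$ to get $m(P_t 1_A)\leq e^{\alpha t}m(A)=0$, whereas you pass through the set inclusion $\{R_\alpha(P_t f)>0\}\subset\{R_\alpha f>0\}$; your extra verification that $m$ is a finite measure is a welcome bit of care that the paper omits.
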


\begin{proof} 
If $A \in \mathcal{B}$ such that $m(A)=0$ then $m(P_t 1_A) = \int_E R_{\alpha} (P_t1_A) d\mu= e^{\alpha t}(\int_E (R_\alpha 1_A-\int_0^te^{-\alpha s} P_s 1_A ds) d \mu ) \leq e^{\alpha t}\int_E R_\alpha 1_A d \mu = e^{\alpha t} m(A) = 0$.
\end{proof}

\begin{rem}  \label{rem 2.8}
i) If $E$ is a separable metric space and $\mathop{\lim}\limits_{t \to 0} P_tg(y) = g(y)$ for any bounded continuous function $g$ on $E$, $y \in E$, we get additional information about some particular measures constructed by Proposition \ref{prop 2.7}, namely topological full support. 
More precisely, if $(x_n)_{n \geq 1}$ is a dense subset in $E$, then the measure $m = \mu \circ R_{\alpha}$, where $\mu = \mathop{\sum}\limits_{n = 1}^{\infty} \dfrac{1}{2^n} \delta_{x_n}$, has full support for all $\alpha > 0$. 
Moreover, one can associate a generalized Dirichlet form on $L^2(m)$ such that the associated semigroup is an $m$ - version of $(e^{-\alpha t}P_t)_{t > 0}$. For these results we refer to \cite{RoTr07}, Lemma 2.3 and Theorem 3.2.
As we shall see later, the problem of existence of invariant measures can be approached in terms of sectorial forms via functional inequalities.

ii) When we deal with a single Markovian kernel $P$ and $\mu$ is a probability measure on $(E, \mathcal{B})$, then one has a similar construction for an auxiliary measure for $P$, by setting $m:=\mu \circ R$, where $R$ is the {\it resolvent} kernel $R:=\mathop{\sum}\limits_{n=0}^\infty \dfrac{1}{2^{n+1}}P^n$.

\end{rem}

If we consider transition functions satisfying (A$_2$) with respect to a measure given by (2.4), then we have the following sufficient condition for (2.3).

\begin{thm} \label{thm 2.9}
Let $\mu$ be a probability measure on $(E, \mathcal{B})$.

i) If there exists $(t_n)_n \nearrow \infty$ such that
$$
\widetilde{c} := \mathop{\lim}\limits_{\varepsilon \searrow 0} \mathop{\sup}\limits_{\mathop{A \in \mathcal{B}}\limits_{\mu \circ R_{\alpha}(A) < \varepsilon}} \mathop{\sup}\limits_{n} \dfrac{1}{t_n} \int_{0}^{t_n} \mu (P_s1_A) ds < \alpha,
$$
then there exists $(t'_n)_n$ s.t. $c((P_t)_t, \mu \circ R_{\alpha}, (t'_n)_n) < \mu \circ R_{\alpha}(E) = \dfrac{1}{\alpha}$.

ii) If there exists $(\alpha_n)_n \searrow 0$ such that
$$
\mathop{\lim}\limits_{\varepsilon \searrow 0} \mathop{\sup}\limits_{\mathop{A \in \mathcal{B}}\limits_{\mu \circ R_{\alpha}(A) < \varepsilon}} \mathop{\sup}\limits_{n} \mu \circ \alpha_n R_{\alpha_n}1_A < \alpha,
$$
then there exists $(\alpha'_n)_n$ s.t. $c((R_{\beta})_{\beta}, \mu \circ R_{\alpha}, (\alpha'_n)_n) < \dfrac{1}{\alpha}$.
\end{thm}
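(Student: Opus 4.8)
The plan is to establish~(2.3) for a well-chosen subsequence by comparing the index $c((P_t)_t,\mu\circ R_\alpha,(t_n)_n)$ with the ``$\mu$-index'' $\widetilde c$ (and, in ii), with its resolvent analogue) through an explicit Fubini computation. Write $m:=\mu\circ R_\alpha$; by Proposition~\ref{prop 2.7} it is an auxiliary measure for $(P_t)_{t\ge0}$, and by Markovianity $m(E)=\mu(R_\alpha 1)=1/\alpha$, as stated. The crux is the identity
\[
\frac{1}{t_n}\int_0^{t_n}m(P_s1_A)\,ds
=\frac{1}{\alpha}\cdot\frac{1}{t_n}\int_0^{t_n}\mu(P_s1_A)\,ds
+\frac{m(P_{t_n}1_A)-m(A)}{\alpha\,t_n},\qquad A\in\mathcal B,\ n\ge1.
\]
I would obtain it by writing $m(P_s1_A)=\mu\bigl(R_\alpha(P_s1_A)\bigr)$, inserting the relation $R_\alpha(P_sf)=e^{\alpha s}\bigl(R_\alpha f-\int_0^s e^{-\alpha r}P_rf\,dr\bigr)$ already used in the proof of Proposition~\ref{prop 2.7}, integrating over $s\in[0,t_n]$, and interchanging the order of integration; the exponential weights then recombine, leaving the first term on the right together with the boundary term displayed.

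Granting the identity, part~i) is bookkeeping. Since $0\le m(P_{t_n}1_A)\le m(E)=1/\alpha$, the boundary term is $\le(\alpha^2 t_n)^{-1}$, uniformly in $A$. Let $(t_n)_n$ be as in the hypothesis, $\widetilde c<\alpha$, and fix a small $\eta>0$. Since $\varepsilon\mapsto\sup_{m(A)<\varepsilon}\sup_n\frac{1}{t_n}\int_0^{t_n}\mu(P_s1_A)\,ds$ is nondecreasing and tends to $\widetilde c$ as $\varepsilon\searrow0$, pick $\varepsilon_0>0$ for which it is $\le\widetilde c+\eta$, and then $N$ so large that $(\alpha^2 t_N)^{-1}\le\eta$; put $t'_k:=t_{N+k-1}$. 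The identity then gives, for every $A$ with $m(A)<\varepsilon_0$ and every $k$,
\[
\frac{1}{t'_k}\int_0^{t'_k}m(P_s1_A)\,ds\;\le\;\frac{\widetilde c+\eta}{\alpha}+\eta,
\]
so that $c((P_t)_t,m,(t'_k)_k)\le(\widetilde c+\eta)/\alpha+\eta$. Taking $\eta$ small, the right-hand side is strictly below $m(E)=1/\alpha$ — this is exactly where the strict inequality $\widetilde c<\alpha$ enters — which is~(2.3) for $(t'_k)_k$.

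For part~ii) I would run the same scheme with the time-averages replaced by the resolvents, the role of the identity above being played by the resolvent equation $R_\alpha R_{\alpha_n}=(\alpha-\alpha_n)^{-1}(R_{\alpha_n}-R_\alpha)$, which yields
\[
m(\alpha_n R_{\alpha_n}1_A)=\frac{1}{\alpha-\alpha_n}\,\mu(\alpha_n R_{\alpha_n}1_A)-\frac{\alpha_n}{\alpha-\alpha_n}\,m(A),\qquad 0<\alpha_n<\alpha .
\]
Here $(\alpha-\alpha_n)^{-1}\to1/\alpha$ and $\alpha_n(\alpha-\alpha_n)^{-1}\to0$ as $\alpha_n\searrow0$, so after discarding finitely many of the $\alpha_n$ the same argument produces a sequence $(\alpha'_n)_n$ with $c((R_\beta)_\beta,m,(\alpha'_n)_n)<1/\alpha$. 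The one point that needs care — and the main obstacle — is the boundary term $(m(P_{t_n}1_A)-m(A))/(\alpha t_n)$ (respectively $\alpha_n(\alpha-\alpha_n)^{-1}m(A)$): it is what prevents the two indices from agreeing exactly, and it is the reason one must pass to a tail of the given sequence, so that $t_n$ (respectively $\alpha_n^{-1}$) is large enough to render it negligible against the slack left by the hypothesis.
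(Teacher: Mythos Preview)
Your argument is essentially the paper's: the key identity
\[
\frac{1}{t_n}\int_0^{t_n}m(P_s1_A)\,ds=\frac{1}{\alpha}\cdot\frac{1}{t_n}\int_0^{t_n}\mu(P_s1_A)\,ds+\frac{m(P_{t_n}1_A)-m(A)}{\alpha t_n}
\]
is exactly what the paper obtains (after its integration by parts, noting $m(P_{t_n}1_A)=\mu(P_{t_n}R_\alpha 1_A)$), and your boundary-term estimate $m(P_{t_n}1_A)\le m(E)=1/\alpha$ together with the pass-to-a-tail step reproduces the paper's bound $c\le \widetilde c/\alpha+1/(\alpha^2 t'_0)$. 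Your derivation of the identity via $R_\alpha P_s f=e^{\alpha s}\bigl(R_\alpha f-\int_0^s e^{-\alpha r}P_rf\,dr\bigr)$ is just a different bookkeeping of the same Fubini/IBP computation. For part~ii) the paper gives no proof; your resolvent-equation identity is correct, and after writing $\tfrac{1}{\alpha-\alpha_n}=\tfrac{1}{\alpha}+\tfrac{\alpha_n}{\alpha(\alpha-\alpha_n)}$ and using $\mu(\alpha_nR_{\alpha_n}1_A)\le 1$ the tail argument goes through as you indicate.

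One slip worth flagging (shared with the paper): at the end you write ``this is exactly where the strict inequality $\widetilde c<\alpha$ enters'', but what the inequality $\frac{\widetilde c+\eta}{\alpha}+\eta<\frac{1}{\alpha}$ actually requires is $\widetilde c<1$, not $\widetilde c<\alpha$. Since $\mu$ is a probability and $P_s1_A\le 1$, one always has $\widetilde c\le 1$, so for $\alpha>1$ the stated hypothesis $\widetilde c<\alpha$ is vacuous while your (and the paper's) argument still needs the strict inequality $\widetilde c<1$. The paper's own choice $t'_0>\tfrac{1}{\alpha^2(1-\widetilde c)}$ tacitly assumes the same thing; so your proof matches the paper's, including this point.
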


\begin{proof} We treat only the first case. 
Let $m = \mu \circ R_{\alpha}$ and shift $(t_n)_n$ into some $(t'_n)_n$ such that $t'_0 > \dfrac{1}{\alpha^2(1 - \widetilde{c})}$. 
Then
$$
c((P_t)_t, m, (t'_n)_n) = \mathop{\lim}\limits_{\varepsilon \searrow 0} \mathop{\sup}\limits_{\mathop{A \in \mathcal{B}}\limits_{m(A) < \varepsilon}} \mathop{\sup}\limits_{n} \dfrac{1}{t'_n} \int_{0}^{t'_n} m (P_s1_A) ds 
$$
$$
= \mathop{\lim}\limits_{\varepsilon \searrow 0} \mathop{\sup}\limits_{\mathop{A \in \mathcal{B}}\limits_{m(A) < \varepsilon}} \mathop{\sup}\limits_{n} \mu (\int_{0}^{\infty} e^{-\alpha t} \dfrac{1}{t'_n} \int_{0}^{t'_n} P_{r + s}1_A ds dr) 
$$
$$
= \mathop{\lim}\limits_{\varepsilon \searrow 0} \mathop{\sup}\limits_{\mathop{A \in \mathcal{B}}\limits_{m(A) < \varepsilon}} \mathop{\sup}\limits_{n} \mu(\dfrac{1}{\alpha} \dfrac{1}{t'_n} \int_{0}^{t'_n} P_s1_A ds \; + \dfrac{1}{t'_n} \int_{0}^{t'_n} e^{-\alpha t} \int_{t'_n}^{t'_n + t} P_s1_A ds 
$$
$$
- \dfrac{1}{t'_n} \int_{0}^{t'_n} e^{-\alpha r} \int_{0}^{r} P_s1_A ds dr).
$$

Integrating by parts the last two terms in the last bracket, we obtain
$$
c((P_t)_t, m, (t'_n)_n) = \mathop{\lim}\limits_{\varepsilon \searrow 0} \mathop{\sup}\limits_{\mathop{A \in \mathcal{B}}\limits_{m(A) < \varepsilon}} \mathop{\sup}\limits_{n} (\dfrac{1}{t'_n} \int_{0}^{t'_n} \mu (P_s1_A) ds 
$$
$$
+ \dfrac{1}{t'_n} \mu (P_{t'_n} R_{\alpha} 1_A) \; - \dfrac{1}{t'_n} m(A)) \leq \dfrac{\widetilde{c}}{\alpha} + \dfrac{1}{\alpha^2t'_0} < \dfrac{1}{\alpha.}
$$
\end{proof}

\section{Applications}

In the sequel we will apply the main results of the previous two sections in several directions. 

First, we present a comparison of the measure theoretic conditions assumed in Theorem \ref{thm 2.3} with the purely topological ones which appear in the classical results of Krylov-Bogolibov or Lasota and Szarek concerning the existence of invariant measures for Feller transition functions on Polish spaces. 
Although not in its full generality, we give a very short proof of the latter above mentioned result in terms of almost invariant measures. As a benefit of this approach, we also have regularity for the obtained invariant measure.

Secondly, we take another look at some versions of Harris' ergodic theorem and give short proofs for the existence of invariant measures under more general conditions.
We also investigate the number of the othogonal invariant (resp. ergodic) measures. 
This approach allows us to give an answer to the open question mentioned by Tweedie (see \cite{Tw01}, Remark 6.) concerning the sufficiency of the so called {\it generalized drift condition} for the existence of an invariant measure.

In a third part we deal with nonlinear SPDEs on a Gelfand triple $V \subset H \subset V^\ast$. 
We show that under a Wang's Harnack type inequality, the strict coercivity condition with respect to the $H$-norm is sufficient to guarantee the existence of a unique invariant probability measure for the solution.
In order to justify even more our two-step approach, we apply it to a perturbation of a Markov kernel satisfying a combined Harnack-Lyapunov condition, for which the result of Tweedie (Theorem \ref{thm 3.5} below) can not be used. 
We also discuss the applicability of Harris' result to this kind of perturbation.

At the end of this section we present several applications to sectorial forms, mainly in terms of functional inequalities. In this situation we remain in the case when the constant $\delta$ (and hence the index $c$) in (2.1) equals $0$, hence we do not exploit the fact that Theorem \ref{thm 2.3} allows us to drop the uniform integrability down to $c((P_t)_t, m) < m(E)$.

\subsection{ Almost invariant measures and the theorem of Lasota and Szarek}
Along this subsection we assume that $(P_t)_{t \geq 0}$ is a measurable Markovian transition function on a Polish space $E$ with its corresponding Borel $\sigma$-algebra $\mathcal{B}$.
Recall that the classical Krylov-Bogoliubov theorem asserts that if $(P_t)_{t \geq 0}$ has the Feller property (i.e. $P_t (C_b) \subset C_b$, where $C_b$ denotes the space of bounded and continuous real functions on $E$) and there exist a probability $\mu$ and a sequence $(t_n)_n \nearrow \infty$ such that the family of probability measures $(\mu_n)_{n \geq 1}$ defined by
$$
\mu_n(A) := \dfrac{1}{t_n}\int_{0}^{t_n} \mu (P_s1_A) ds, \; A \in \mathcal{B}, \; n \geq 1, \leqno(3.1)
$$
is tight, then there exists an invariant probability measure $\nu$ which is the limit of some weakly convergent subsequence $(\mu_{n_k})_{k \geq 1}$.

At least when $(P_t)_{t \geq 0}$ (or $(R_{\alpha})_{\alpha > 0}$) has the strong Feller property, i.e. $P_t$ (resp. $R_\alpha$) maps bounded Borel functions on $E$ into continuous functions, the above mentioned result can be compared with Theorem \ref{thm 2.3} as follows.

\begin{prop} \label{prop 3.1}
Assume that $(P_t)_{t > 0}$ is strong Feller and there exist a probability measure $\mu$ and a sequence $(t_n)_n \nearrow \infty$ such that $(\mu_n)_n$ defined by (3.1) is tight. 
Then for all $\alpha > 0$, if $m = \mu \circ R_{\alpha}$ then there exists a subsequence $(t_{n_k})_{k \geq 1}$ such that
$$
c((P_t)_{t > 0}, m, (t_{n_k})_{k}) = 0.
$$
\end{prop}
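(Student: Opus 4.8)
The plan is to rewrite the index $c$ directly in terms of the Krylov--Bogoliubov measures $\mu_n$ and then to use the strong Feller hypothesis to upgrade the weak compactness that tightness provides into a total variation statement. \emph{Step 1 (an identity for $c$).} Since $m=\mu\circ R_\alpha$ and $R_\alpha$ commutes with every $P_s$, for $A\in\mathcal B$ and $s\ge 0$ we have $m(P_s1_A)=\mu(R_\alpha P_s1_A)=\mu(P_sR_\alpha 1_A)$, hence, integrating and using the definition (3.1) of $\mu_n$ (extended from indicators to bounded measurable functions),
\[
\frac1{t_n}\int_0^{t_n}m(P_s1_A)\,ds=\frac1{t_n}\int_0^{t_n}\mu(P_sR_\alpha 1_A)\,ds=\int_E R_\alpha 1_A\,d\mu_n .
\]
Thus the Krylov--Bogoliubov averages of $m$, namely $\widetilde\mu_n:=\tfrac1{t_n}\int_0^{t_n}m\circ P_s\,ds$, equal $\mu_n\circ R_\alpha$, and for every $(t_n)_n\nearrow\infty$,
\[
c((P_t)_{t>0},m,(t_n)_n)=\lim_{\varepsilon\searrow0}\ \sup_{A\in\mathcal B,\ m(A)\le\varepsilon}\ \sup_n\ \widetilde\mu_n(A).
\]
Moreover $m$ is an auxiliary measure (Proposition~\ref{prop 2.7}), so $m(A)=0$ forces $m(P_s1_A)=0$ for all $s$ and hence $\widetilde\mu_n\ll m$ for every $n$. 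Therefore it suffices to produce a subsequence $(t_{n_k})_k$ along which $(\widetilde\mu_{n_k})_k$ is \emph{uniformly} absolutely continuous with respect to $m$.

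\emph{Step 2 (compactness).} By tightness and Prokhorov's theorem I would pass to a subsequence with $\mu_{n_k}\to\nu$ weakly, $\nu$ a probability measure on $E$ (by the Krylov--Bogoliubov argument, applicable since strong Feller implies Feller, $\nu$ is in fact $(P_t)$-invariant, but this will not be needed). The key point is that $R_\alpha$ is \emph{uniformly} strong Feller (ultra Feller), i.e.\ the family $\{R_\alpha f:\|f\|_\infty\le1\}$ is equicontinuous. This follows from the classical fact that the composition of two strong Feller kernels is ultra Feller, together with the resolvent identity, which exhibits $R_\alpha$ as the operator-norm limit on bounded Borel functions of the ultra Feller kernels $\beta R_\alpha R_\beta$ as $\beta\to\infty$; equicontinuity is stable under such limits.

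\emph{Step 3 (from weak to total variation).} I would then show that $\widetilde\mu_{n_k}=\mu_{n_k}\circ R_\alpha$ converges to $\nu\circ R_\alpha$ in total variation. Fix $\eta>0$; tightness gives a compact $K$ with $\mu_{n_k}(E\setminus K)\le\eta$ for all $k$ and $\nu(E\setminus K)\le\eta$. On $K$ the family $\{(R_\alpha f)|_K:\|f\|_\infty\le1\}$ is equicontinuous and uniformly bounded, hence totally bounded in $C(K)$ by Arzel\`a--Ascoli; choosing a finite $\eta$-net, extending its members to functions in $C_b(E)$ with the same sup-bound, and using that $\mu_{n_k}\to\nu$ weakly (hence uniformly over that finite family) one obtains $\limsup_k\|\widetilde\mu_{n_k}-\nu\circ R_\alpha\|_{\mathrm{TV}}\le C\eta$ with $C$ independent of $\eta$; letting $\eta\downarrow0$ proves the claim. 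Since each $\widetilde\mu_{n_k}\ll m$, the total variation limit $\nu\circ R_\alpha$ is $\ll m$ as well, and a finite family of finite measures all absolutely continuous with respect to $m$ is automatically uniformly so; combining this with the total variation estimate for large $k$ shows that $(\widetilde\mu_{n_k})_k$ is uniformly absolutely continuous with respect to $m$. By Step 1 this is precisely $c((P_t)_{t>0},m,(t_{n_k})_k)=0$.

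The step I expect to be the main obstacle is Step 3. Weak convergence of $(\mu_{n_k})$ is, on its own, perfectly compatible with the densities $d\widetilde\mu_{n_k}/dm$ losing uniform integrability (equivalently, with the limit becoming $m$-singular), so the strong Feller assumption has to be used in an essential way — here through the fact that an ultra Feller kernel sends a weakly relatively compact family of probability measures to a total-variation relatively compact one. The identity of Step 1, the absolute continuity $\widetilde\mu_n\ll m$, and the closing uniform-absolute-continuity bookkeeping are routine.
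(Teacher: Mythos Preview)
Your argument is correct, but it takes a more elaborate route than the paper's. Both proofs rest on the same identity $\tfrac1{t_n}\int_0^{t_n}m(P_sg)\,ds=\mu_n(R_\alpha g)$ and on passing to a weakly convergent subsequence $\mu_{n_k}\to\nu$. From there, however, the paper exploits the \emph{invariance} of $\nu$ (which you note but set aside): since $\nu\circ\alpha R_\alpha=\nu$ and $R_\alpha g\in C_b$ for every bounded Borel $g$ (mere strong Feller of $R_\alpha$, not ultra Feller), one gets directly
\[
m(g\,f_{n_k})=\mu_{n_k}(R_\alpha g)\longrightarrow\nu(R_\alpha g)=\tfrac1\alpha\,\nu(g)=\tfrac1\alpha\,m(\rho g)\qquad\text{for every }g\in L^\infty(m),
\]
where $\nu=\rho\cdot m$ (the absolute continuity $\nu\ll m$ is checked first by the same device). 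Thus $f_{n_k}\rightharpoonup\rho/\alpha$ weakly in $L^1(m)$, and Dunford--Pettis yields $c=0$ in two lines. Your route --- ultra Feller for $R_\alpha$ via the resolvent identity, Arzel\`a--Ascoli on compacts, a net argument upgrading weak convergence of $\mu_{n_k}$ to total-variation convergence of $\mu_{n_k}\circ R_\alpha$ --- is heavier but buys more: it yields $L^1$-norm convergence of the densities (not just weak $L^1$ convergence), and it does not use the invariance of $\nu$, so it applies verbatim to any tight sequence of probabilities pushed through an ultra Feller kernel.
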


\begin{proof} Let $(t_{n_k})_{k \geq 1}$ such that $(\mu_{n_k})_{k \geq 1}$ converges weakly to an invariant measure $\nu$. 
Notice first that $\nu \ll m$. 
Indeed, if $m(A) = 0$, then $\nu(A) = \alpha \nu(R_{\alpha}1_A)  = \alpha \mathop{\lim}\limits_{k} \dfrac{1}{t_{n_k}}\int_{0}^{t_{n_k}} m(P_s1_A) ds = 0$, hence $\nu = \rho \cdot m$ for some $\rho \in L^1(m)$, where the second equality follows by the strong Feller property and the weakly convergence of $(\mu_{n_k})_k$.

On the other hand,
$ m(\rho g) = \nu(g) = \alpha \nu(U_{\alpha}g) = \alpha \mathop{\lim}\limits_{n} \dfrac{1}{t_{n_k}} \int_{0}^{t_{n_k}} m(P_sg) ds = \alpha \mathop{\lim}\limits_{k} m(g \cdot \dfrac{1}{t_{n_k}} \int_{0}^{t_{n_k}} P_s^{\ast}1 ds)$ for all $g \in L^{\infty}(m)$, when $(P_s^{\ast})_s$ is the adjoint of $(P_s)_s$ with respect to $m$.
It follows that $\left(\dfrac{1}{t_{n_k}}\int_{0}^{t_{n_k}} P_s^{\ast}1 ds\right)_{k \geq 1}$ is weakly convergent to $\dfrac{\rho}{\alpha}$ in $L^1(m)$, hence $c((P_t)_{t > 0}, m, (t_{n_k})_k) = 0$ by Dunford-Pettis theorem.
\end{proof}

We recall that by \cite{LaSz06}, Proposition 3.1, if $E$ is a Polish space, $\mathcal{B}$ is its Borel $\sigma$-algebra, and $(P_t)_{t \geq 0}$ is a measurable Markovian transition function possessing the Feller property such that there exist a compact set $K \subset E$ and $x \in E$ for which
$$
\mathop{\limsup\limits_{t \to \infty}}\dfrac{1}{t}\int_0^t\delta_x \circ P_s1_Kds = \epsilon > 0, \leqno{(3.2)}
$$
then there exists a nonzero invariant measure for $(P_t)_{t \geq 0}$;
this existence result dates back to the work of \cite{Fo69} and \cite{St86};
see also \cite{KoPeSz10} for uniqueness of the invariant measure, when $(P_t)_{t\geq 0}$ satisfies an additional equi-continuity property. 

\begin{prop} \label{prop 3.2}
Let $(P_t)_{t \geq 0}$ be a measurable Markovian transition function on a Polish space $E$ with $\mathcal{B}$ its Borel $\sigma$-algebra.
If condition (2.3) holds then (3.2) is satisfied with $\delta_x$ replaced by $m$.
\end{prop}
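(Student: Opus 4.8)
The plan is to argue by contraposition. Suppose that (3.2) fails with $\delta_x$ replaced by $m$, i.e.\ that $\limsup_{t\to\infty}\frac1t\int_0^t m(P_s1_K)\,ds=0$ for \emph{every} compact $K\subset E$; I will show that then $c((P_t)_t,m,(t_n)_n)=m(E)$ for every sequence $(t_n)_n\nearrow\infty$, which contradicts (2.3) (note $c((P_t)_t,m,(t_n)_n)\le m(E)$ always, since $m(P_s1_A)\le m(P_s1)=m(E)$). The only input from the topology of $E$ that is needed is the tightness of finite Borel measures on Polish spaces (Ulam's theorem): for each $\varepsilon>0$ there is a compact $K_\varepsilon\subset E$ with $m(E\setminus K_\varepsilon)<\varepsilon$.

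The first point is that, since $\frac1t\int_0^t m(P_s1_K)\,ds\ge 0$, a vanishing $\limsup$ is in fact a vanishing limit, so along \emph{any} $(t_n)_n\nearrow\infty$ we get $\frac1{t_n}\int_0^{t_n}m(P_s1_K)\,ds\to 0$ and hence $\inf_n\frac1{t_n}\int_0^{t_n}m(P_s1_K)\,ds=0$. The second point is the Markov property: $P_s1=1$, so $1_{E\setminus K}=1-1_K$ gives $m(P_s1_{E\setminus K})=m(E)-m(P_s1_K)$ for all $s$ and all $K$. Now I would fix $(t_n)_n\nearrow\infty$ and $\varepsilon>0$, pick $K_\varepsilon$ as above, and use $A_\varepsilon:=E\setminus K_\varepsilon\in\mathcal B$ (which has $m(A_\varepsilon)<\varepsilon$) as a test set in the definition of $c$:
$$
\sup_{m(A)<\varepsilon}\ \sup_n\frac1{t_n}\int_0^{t_n}m(P_s1_A)\,ds\ \ge\ \sup_n\frac1{t_n}\int_0^{t_n}m(P_s1_{A_\varepsilon})\,ds\ =\ m(E)-\inf_n\frac1{t_n}\int_0^{t_n}m(P_s1_{K_\varepsilon})\,ds\ =\ m(E).
$$
Together with the trivial reverse bound, this supremum equals $m(E)$ for every $\varepsilon>0$; letting $\varepsilon\searrow0$ yields $c((P_t)_t,m,(t_n)_n)=m(E)$. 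Since $(t_n)_n$ was arbitrary, (2.3) fails for every sequence, which is the contradiction sought and completes the contrapositive.

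I do not anticipate a genuine obstacle here; the argument is short precisely because the two crucial facts — that a nonnegative time-average with vanishing $\limsup$ already tends to $0$ along every diverging sequence, and that $1_{E\setminus K}$ and $1_K$ are tied together by $P_s1=1$ — force essentially the whole mass of $m$ to sit, on the average over long time horizons, on the escaping sets $E\setminus K_\varepsilon$. The only mild bookkeeping is to invoke tightness of $m$ so that complements of compacts are legitimate test sets of arbitrarily small $m$-measure; if one preferred a non-topological variant, any exhausting sequence of sets with $m$-measure tending to $m(E)$ that is ``visited'' with vanishing average frequency would serve the same role.
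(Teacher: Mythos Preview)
Your proof is correct and is essentially the contrapositive of the paper's direct argument: the paper fixes $(t_n)_n$ with (2.3), picks $\varepsilon>0$ so that the double supremum is strictly below $m(E)$, uses tightness to find a compact $K$ with $m(E\setminus K)<\varepsilon$, and then deduces $\inf_n \frac{1}{t_n}\int_0^{t_n} m(P_s 1_K)\,ds>0$ from $P_t1=1$. The ingredients (tightness of $m$, the Markov property $P_t1=1$, and the complement identity) are identical; the only extra step in your version is the observation that a nonnegative time average with vanishing $\limsup$ actually tends to zero, which the direct argument sidesteps.
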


\begin{proof}
Let $m$ be a probability measure on $(E, \mathcal{B})$ such that (2.3) holds, i.e. there exists $\varepsilon > 0$ such that
$\mathop{\sup}\limits_{\mathop{A \in \mathcal{B}}\limits_{m(A) \leq \varepsilon}} \mathop{\sup}\limits_{n} \dfrac{1}{t_n} \int_{0}^{t_n} m(P_s1_A) \; ds < 1$.
Since $m$ is tight, there exists a compact set $K \subset E$ such that $m(E \setminus K) < \varepsilon$. 
Therefore, $\mathop{\sup}\limits_{n} \dfrac{1}{t_n} \int_{0}^{t_n} m(P_s1_{E \setminus K}) \; ds < 1$ and because $P_t1=1,\; t\geq 0$ it follows that
$\mathop{\inf}\limits_{n} \dfrac{1}{t_n} \int_{0}^{t_n} m(P_s1_{K}) \; ds > 0$.
\end{proof}

Under the asssumptions of Proposition \ref{prop 3.2} and if $(P_t)_t$ is Feller, the existence of a nonzero invariant measure follows by \cite{LaSz06}, since it is straightforward to check that the result of \cite{LaSz06} still holds true, with a similar proof as the original, once we replace the Dirac measure $\delta_x$ with an arbitrary probability measure (in our case $m$) on $(E,\mathcal{B})$.

We emphasize that, in contrast with our result, besides the additional regularity conditions for $(E, \mathcal{B})$ and $(P_t)_{t \geq 0}$ which have to be assumed, by the result of Lasota and Szarek it does not follow that the obtained invariant measure is absolutely continuous w.r.t. $m$.

Although Theorem \ref{thm 2.3} deals with non-topological assumptions, we will see in the sequel that it works pretty well in combination with Prohorov's theorem in order to provide a quite short and transparent proof for Lasota's result on the existence of invariant measures when $\epsilon$ in (3.2) is strictly grater than $\dfrac{1}{2}$. 
In addition, our approach guarantees the absolute continuity of the obtained invariant measure with respect to a convenient auxiliary measure.
To do this, first note that if $(P_t)_t$ is a measurable transition function on a Polish space $(E, \mathcal{B})$, $\nu$ is a probability on $(E, \mathcal{B})$, and $K$ is a compact subset of $E$, then for $(t_n)_n \nearrow \infty$, the family of measures $(\mu_n^K)_n$ defined by
$$
\mu_n^K(f):=\dfrac{1}{t_n}\int_0^{t_n} \nu(P_s(1_K f)) ds
$$
is tight. 
Let $(t_{n_k})_k \nearrow \infty$ and $\mu$ be such that $(\mu^K_{n_k})_k$ is weakly convergent to $\mu$. 
Even if condition (3.2) (where $\delta_x$ is replaced by $\nu$) ensures the non-triviality of $\mu$, the difficulty is that it will no longer be invariant. 
Lasota and Szarek avoided this impediment by looking at a Riesz type decomposition for the positive functional on $C_b$ obtained as a Banach limit of the genuine Krylov-Bogoliubov measures given by (3.1). 
However, as it is shown by the next result, it turns out that the auxiliary measure defined for some arbitrarily fixed $\alpha > 0$ by
$$
m(f)= \mu \circ \alpha R_\alpha \; \mbox{ for all} \; f \in p\mathcal{B} \leqno{(3.3)}
$$   
is almost invariant.

\begin{thm} \label{thm 3.2} 
If $(P_t)_{t \geq 0}$ is a measurable Markovian transition function on $(E, \mathcal{B})$ 
possessing the Feller property such that there exist a compact set $K \subset E$ and a probability measure $\nu$ for which it holds that
$$
\mathop{\limsup\limits_{t \to \infty}}\dfrac{1}{t}\int_0^t \nu(P_s1_K)ds > \dfrac{1}{2} \;,
$$
then $m$ given by (3.3) is almost invariant for $(P_t)_{t \geq 0}$. 
Hence, there exists an invariant measure which is absolutely continuous with respect to $m$.
\end{thm}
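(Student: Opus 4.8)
The plan is to turn the weak limit $\mu$ of the $K$‑localized Krylov–Bogoliubov measures into a genuinely \emph{almost invariant} measure by first extracting from it a quantitative ``invariance defect'', then pushing that defect through the resolvent $\alpha R_\alpha$, and finally invoking Theorem \ref{thm 2.3}. Concretely, I would set $\epsilon:=\limsup_{t\to\infty}\frac1t\int_0^t\nu(P_s1_K)\,ds>\frac12$, choose $(t_n)_n\nearrow\infty$ with $\frac1{t_n}\int_0^{t_n}\nu(P_s1_K)\,ds\to\epsilon$, and put $\mu_n^K(f):=\frac1{t_n}\int_0^{t_n}\nu(P_s(1_Kf))\,ds$, $\mu_n(f):=\frac1{t_n}\int_0^{t_n}\nu(P_sf)\,ds$ for $f\in p\mathcal B$. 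Each $\mu_n^K$ is a subprobability concentrated on the compact set $K$, so $(\mu_n^K)_n$ is tight and, passing to a subsequence (still denoted $(t_n)_n$), $\mu_n^K\to\mu$ weakly for some finite measure $\mu$ with $\mu(E)=\lim_n\mu_n^K(\mathbf 1)=\epsilon$. Define $m:=\mu\circ\alpha R_\alpha$ as in (3.3); since $\alpha R_\alpha\mathbf 1=\mathbf 1$ we get $m(E)=\mu(E)=\epsilon$, and because $m$ is a positive multiple of $(\mu/\epsilon)\circ R_\alpha$, Proposition \ref{prop 2.7} gives that $(P_t)_{t\geq0}$ satisfies $(A_2)$ with respect to $m$.

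The heart of the argument is the defect estimate
$$\mu(P_\tau g)\le\mu(g)+(1-\epsilon)\|g\|_\infty\qquad\text{for all }\tau>0\text{ and bounded }g\in p\mathcal B.\leqno{(\star)}$$
For $g\in C_b(E)$ this is obtained by writing $\mu_n=\mu_n^K+\mu_n^{K^c}$, noting that $\mu_n(P_\tau g)-\mu_n(g)=\frac1{t_n}\bigl(\int_{t_n}^{t_n+\tau}\nu(P_sg)\,ds-\int_0^\tau\nu(P_sg)\,ds\bigr)\to0$, that $\mu_n^K(g)\to\mu(g)$ and $\mu_n^K(P_\tau g)\to\mu(P_\tau g)$ thanks to the Feller property (which ensures $P_\tau g\in C_b$), and that $0\le\mu_n^{K^c}(P_\tau g)$ while $\mu_n^{K^c}(g)\le\|g\|_\infty\mu_n^{K^c}(E)=\|g\|_\infty(1-\mu_n^K(E))\to\|g\|_\infty(1-\epsilon)$; combining these yields $\mu(P_\tau g)-\mu(g)=\lim_n\bigl(\mu_n^{K^c}(g)-\mu_n^{K^c}(P_\tau g)\bigr)\le(1-\epsilon)\|g\|_\infty$. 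The passage of $(\star)$ from $C_b$ to all bounded Borel $g\ge0$ is routine: approximate $g$ in $L^1(\mu+\mu\circ P_\tau)$ by $g_j\in C_b(E)$ with $0\le g_j\le\|g\|_\infty$ and take limits by dominated convergence.

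To finish, fix $A\in\mathcal B$ and $t>0$. Since $R_\alpha$ and $P_t$ commute, $\alpha R_\alpha P_t1_A=P_t(\alpha R_\alpha1_A)$ and $g:=\alpha R_\alpha1_A$ is bounded Borel with $0\le g\le1$, so $(\star)$ gives
$$m(P_t1_A)=\mu\bigl(P_t(\alpha R_\alpha1_A)\bigr)=\mu(P_tg)\le\mu(g)+(1-\epsilon)=m(A)+(1-\epsilon).$$
Thus $m(P_t1_A)\le\delta\,m(E)+\phi(A)$ with $\delta:=\tfrac{1-\epsilon}{\epsilon}$ and $\phi:=m$; since $\epsilon\in(\tfrac12,1]$ we have $\delta\in[0,1)$, and $\phi$ is trivially absolutely continuous with respect to $m$, so $m$ is almost invariant for $(P_t)_{t\geq0}$. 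The implication ii)$\Rightarrow$i) of Theorem \ref{thm 2.3} (applicable because $(A_2)$ holds for $m$) then yields a nonzero finite $(P_t)_{t\geq0}$‑invariant measure absolutely continuous with respect to $m$.

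The main obstacle is the second step: one must recognize that the failure of $\mu$ to be invariant is entirely controlled by the mass $1-\epsilon$ that the localized averages lose ``at infinity'', and that after smoothing by $\alpha R_\alpha$ this defect becomes exactly an almost‑invariance inequality with the harmless choice $\phi=m$ — which is precisely the point where the hypothesis $\epsilon>\tfrac12$ is needed, in order to force $\delta<1$. A secondary technical point is the extension of $(\star)$ from continuous to bounded Borel test functions, which is unavoidable because $\alpha R_\alpha1_A$ need not be continuous in the absence of the strong Feller property.
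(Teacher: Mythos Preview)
Your proof is correct and follows essentially the same route as the paper's: establish the defect inequality $\mu(P_\tau g)\le\mu(g)+c$ for $g\in C_b^+$ with $0\le g\le1$ via the time-shift of the Krylov--Bogoliubov averages and tightness of $(\mu_n^K)_n$, extend to Borel $g$ by approximation, and then substitute $g=\alpha R_\alpha 1_A$ to obtain almost invariance of $m=\mu\circ\alpha R_\alpha$. The only differences are cosmetic---you organize the computation through the decomposition $\mu_n=\mu_n^K+\mu_n^{K^c}$, whereas the paper writes out the chain of inequalities directly; and you keep the sharper defect constant $1-\epsilon$ (yielding $\delta=(1-\epsilon)/\epsilon$), whereas the paper uses the cruder bound $1-\mu_{n_k}^K(E)<\tfrac12$ (yielding $\delta=\tfrac{1}{2m(E)}$).
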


\begin{proof}
By hypothesis, there exists $(t_n)_n \nearrow \infty$ such that $\mu_n^K(E)=\dfrac{1}{t_n}\mathop{\int}_0^{t_n} \nu(P_s1_K)ds > \dfrac{1}{2}$, therefore we also have that $m(E)=\mu(E) > \dfrac{1}{2}$.
If $0 \leq f \in C_b$, then 
$$
\mu(P_t f) = \lim\limits_k \mu_{n_k}^K(P_t f)
=\lim\limits_k\dfrac{1}{t_{n_k}}\int_0^{t_{n_k}} \nu(P_s(1_K P_tf))ds 
$$
$$
\leq \limsup\limits_k\dfrac{1}{t_{n_k}}\int_0^{t_{n_k}} \nu(P_{s+t} f))ds
\leq \limsup\limits_k\dfrac{1}{t_{n_k}}\int_t^{t_{n_k}+t} \nu(P_{s} f))ds
$$
$$
= \limsup\limits_k[\dfrac{1}{t_{n_k}}(\int_0^{t_{n_k}} \nu(P_{s} f)ds + \int_{t_{n_k}}^{t_{n_k}+t} \nu(P_{s} f)ds - \int_{0}^{t} \nu(P_{s} f))ds)]
$$
$$
\leq \limsup\limits_k\dfrac{1}{t_{n_k}}\int_0^{t_{n_k}} \nu(P_{s} f)ds
$$
$$
\leq \limsup\limits_k[\dfrac{1}{t_{n_k}}(\int_0^{t_{n_k}} \nu(P_{s}(1_K f))ds+\mathop{\int}_0^{t_{n_k}} \nu(P_s1_{E \setminus K})ds)]
\leq \mu(f) + \dfrac{1}{2} = \mu(f) + \dfrac{1}{2\mu(E)}\mu(E).
$$
Replacing $f$ by $\alpha R_\alpha f$ and using an approximation argument, it follows that $m(P_t 1_A) \leq m(A) + \dfrac{1}{2m(E)}m(E)$ for all $A \in \mathcal{B}$. But as we noticed at the beginning, $m(E)>\dfrac{1}{2}$, hence $m$ is almost invariant.
\end{proof}

\subsection{Almost invariant measures and Harris' theorem}

In this subsection, in contrast with the previous one, we investigate some results concerning invariant measures which involve exclusively non-topological conditions. 
Therefore, we place ourselves in the general situation of a Markovian kernel $P$ on a measurable space $(E, \mathcal{B})$.
We emphasize that, in view of Proposition \ref{prop 2.3}, all of the following results, although stated for a single operator, can be applied to the case of a continuous time transition function $(P_t)_{t\geq 0}$ just by looking at a single kernel $P_{t_0}$; see Subsection 3.3. 

We first recall several definitions and conditions required by some well known versions of Harris' theorem to guarantee the existence, uniqueness, and also different rates of stability (polynomial, sub-exponential or exponential) for a semigroup. 
These conditions slightly differ one from another but, in principle, they assume the existence of a {\it small} set (in the sense made precise below) which is visited infinitely often. 
Small sets should be regarded as a substitute for infinitely often visited compact sets in the Feller case (which is the situation of the theorem of Lasota and Szarek discussed in the previous subsection). 
As a matter of fact, if $P$ is irreducible and a $T$-chain, then every compact set is a small set; see \cite{MeTw93a}.
In practice, the small sets of interest are the sub-level sets of a {\it Lyapunov} function.

Recall that (cf. e.g. \cite{MeTw93a}, Chapter 5, Section 5.2) a measurable set $C \in \mathcal{B}$ is {\it small} with respect to a Markovian kernel $P$ on $(E, \mathcal{B})$ if there exist a constant $\alpha \in (0,1]$ and a probability measure $\nu$ such that 
$$
\inf\limits_{x \in C}P(x, \cdot) \geq \alpha \nu(\cdot).
$$

Let us recall the following two assumptions; see e.g. \cite{HaMa11}.

\vspace{0.2cm}
\noindent
{\bf Assumption {\bf A}.} 
There exist a function $V \in p\mathcal{B}$ and constants $b \geq 0$ and $\gamma \in (0, 1)$ such that
$$
PV \leq \gamma V + b \;\;\; {\rm on} \; E.
$$  
Such $V$ is usually called a Lyapunov or Foster-Lyapunov function.
\noindent
Furthermore, the sub-level set $[V \leq r]$ is small for some $r > 2b \slash (1-\gamma)$.

\vspace{0.2cm}

\noindent
{\bf Assumption {\bf A'}.} 
There exist $\widetilde{V} \in p\mathcal{B}$, $\widetilde{V} \geq 1$, constants $\widetilde{b} \geq 0$ and $\widetilde{\gamma} \in (0, 1)$, and a subset $S \subset E$ which is small such that
$$
P\widetilde{V} \leq \widetilde{\gamma} \widetilde{V} + \widetilde{b} 1_S \;\;\; {\rm on} \; E.
$$  

The second assumption is encountered more frequently in the theory of Markov chains and in general it does not imply the first one; see, e.g. \cite{HaMa11}, Remark 3.3. 
However, it was shown in \cite{HaMa11}, Theorem 3.4, that if Assumption {\bf A'} holds for $P$, then Assumption {\bf A} holds for $S_N = \dfrac{1}{N}\sum\limits_{k=0}^{N-1} P^k$ for some sufficiently large $N$.

It is well known that under Assumption {\bf A} not only existence and uniqueness of the invariant measure is ensured, but also the spectral gap in a weighted supremum norm.
For completeness we state this result below.
Although there exist several different approaches to prove it, we refer the reader to the work of \cite{HaMa11} for a direct proof based on Banach fixed point theorem; see also the references therein.

\begin{thm} \label{thm 3.4}
{\it(cf. e.g. \cite{HaMa11})} If Assumption {\bf A} is satisfied, then there exists a unique invariant probability measure $m$ for $P$.
 In addition, for some constants $C>0$ and $\gamma \in (0,1)$ it holds that
$$
\|P^nf - m(f)\|\leq C \gamma^n  \|f-m(f)\|
$$ 
for all $\mathcal{B}$-measurable $f$ with $\|f\|<\infty$, where $\|f\|=\|\dfrac{f}{1+V}\|_\infty$.
\end{thm}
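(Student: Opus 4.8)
The plan is to reproduce, in its main steps, the ``modern'' proof of Harris' theorem as in \cite{HaMa11}; the spectral gap is the substantive part, since mere existence can also be deduced from our Theorem \ref{thm 2.3.1} (one checks that an auxiliary measure built as in Remark \ref{rem 2.8}(ii) from a probability concentrated on the small set $[V \le r]$ is almost invariant, using the iterated bound $P^nV \le \gamma^nV + b/(1-\gamma)$). The key device for the full statement is a one-parameter family of weighted supremum norms on functions, $\|\phi\|_\beta := \sup_{x\in E}|\phi(x)|/(1+\beta V(x))$, together with the dual ``Harris--Wasserstein'' metrics $W_\beta$ on the set $\mathcal{P}_V(E)$ of probability measures with finite Lyapunov moment, associated with the distance-like function $d_\beta(x,y) := (1+\beta V(x)+\beta V(y))\mathbf{1}_{\{x\ne y\}}$ on $E$ (which satisfies the triangle inequality for every $\beta>0$). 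A preliminary observation is that iterating Assumption \textbf{A} gives $P^nV \le \gamma^nV + b/(1-\gamma)$, so any invariant probability measure $m$ automatically satisfies $m(V)\le b/(1-\gamma)=:B<\infty$; in particular such measures lie in $\mathcal{P}_V(E)$, on which $W_\beta$ is a complete metric.

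The heart of the argument is a single-step contraction: I would show that there are $\beta>0$ and $\bar\gamma\in(0,1)$ with $W_\beta(\mu_1 P,\mu_2 P)\le\bar\gamma\,W_\beta(\mu_1,\mu_2)$ for all $\mu_1,\mu_2\in\mathcal{P}_V(E)$. By convexity of the Wasserstein distance it suffices to prove this for Dirac masses, i.e. $W_\beta(\delta_xP,\delta_yP)\le\bar\gamma(1+\beta V(x)+\beta V(y))$ for all $x\ne y$, and this is obtained by a dichotomy on the pair $(x,y)$. If $V(x)+V(y)$ exceeds a threshold depending only on $\gamma$, $b$ and $r$, then the Lyapunov bound $PV(x)+PV(y)\le\gamma(V(x)+V(y))+2b$ alone, together with $W_\beta(\delta_xP,\delta_yP)\le 2+\beta PV(x)+\beta PV(y)$, forces a contraction. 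If $V(x)+V(y)$ is below that threshold, then $x,y\in[V\le r]$ with $r$ as in Assumption \textbf{A}, and the smallness $\inf_{z\in[V\le r]}P(z,\cdot)\ge\alpha\nu(\cdot)$ yields a coupling of $P(x,\cdot)$ and $P(y,\cdot)$ that agrees with probability at least $\alpha$, giving a contraction factor bounded away from $1$. The main obstacle is precisely the bookkeeping that glues the two regimes together: one must fix the threshold so the ``large'' case closes, and then take $\beta$ small enough that the drift loss in the ``small'' case is dominated, so both regimes yield the same $\bar\gamma<1$ — and it is here that $r>2b/(1-\gamma)$ is used. Dually (Kantorovich duality), this one-step estimate is exactly a contraction of $P$, acting on functions modulo constants, in the $\|\cdot\|_\beta$-geometry.

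Granting the one-step contraction, the conclusions follow quickly. The map $\mu\mapsto\mu P$ is a $\bar\gamma$-contraction of the complete metric space $(\mathcal{P}_V(E),W_\beta)$, so by the Banach fixed point theorem it has a unique fixed point $m$, which is the unique invariant probability measure. Moreover $W_\beta(\delta_xP^n,m)=W_\beta(\delta_xP^n,mP^n)\le\bar\gamma^nW_\beta(\delta_x,m)\le\bar\gamma^n(1+\beta V(x)+\beta B)\le(1+\beta B)\bar\gamma^n(1+\beta V(x))$. For any $f$ with $\|f\|_\beta<\infty$ and any $x$, writing $P^nf(x)-m(f)=\int(f-m(f))\,d(\delta_xP^n-m)$ and using the duality between $\|\cdot\|_\beta$ and $W_\beta$ gives $|P^nf(x)-m(f)|\le\|f-m(f)\|_\beta\,W_\beta(\delta_xP^n,m)\le(1+\beta B)\bar\gamma^n\|f-m(f)\|_\beta(1+\beta V(x))$. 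Dividing by $1+V(x)$, taking the supremum over $x$, and invoking the elementary equivalence $\|\phi\|\le\|\phi\|_\beta\le\beta^{-1}\|\phi\|$ valid for $\beta\in(0,1]$ (where $\|\phi\|=\|\phi/(1+V)\|_\infty$), one obtains $\|P^nf-m(f)\|\le C\gamma^n\|f-m(f)\|$ with $C=(1+\beta B)/\beta$ and $\gamma=\bar\gamma$, as claimed.
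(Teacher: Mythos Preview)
Your outline is correct and faithfully reproduces the Hairer--Mattingly contraction argument. Note, however, that the paper does not actually prove Theorem \ref{thm 3.4}: it is stated ``for completeness'' with the remark that ``we refer the reader to the work of \cite{HaMa11} for a direct proof based on Banach fixed point theorem.'' Your proposal is precisely that cited proof, so in effect you have supplied what the paper chose to omit rather than offered an alternative to any argument in the paper.
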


There is an extended notion of small sets, namely the so called {\it petite sets}, which are defined by means of {\it generalized} resolvents.
These instruments were developed by Meyn and Tweedie in order to study (geometric) convergence for Markov processes in both discrete and continuous time, and we refer the reader to \cite{MeTw93a}, \cite{MeTw93b}, \cite{MeTw93c}, and \cite{MeTw93d}.
For a study of weaker rates of convergence we mention \cite{DoFoGu09}, \cite{Ha10}, and the references therein.

Anyway, to check the smallness of a set $C$ is a quite delicate issue and the usual techniques require continuity or irreducibility conditions for the associated Markov process.
In the papers \cite{Tw01} and \cite{FoTw01}, the authors investigate the existence of invariant measures for Markov chains, with direct applications to non-linear time series, assuming the existence of a Foster-Lyapunov function and, instead of the smallness property for the test set $C$, a weak uniform countable additivity condition.
More precisely, the following assumption has been considered.

\vspace{0.2cm}
\noindent
{\bf Assumption {\bf B}.} i) There exist a measurable function $V:E \rightarrow [0, \infty)$, a finite constant $b$ and a measurable set $C$ such that
$$
PV \leq V -1 + b1_C \;\;\; {\rm on} \; E.
$$

ii) The set $C$ from i) is such that the following uniform countable additivity condition holds: for all $(A_n)_n \subset \mathcal{B}$ decreasing to $\emptyset$ we have that
$$
\lim\limits_n\sup\limits_{x\in C}P(A_n)(x)=0.
$$
Under such hypotheses, Tweedie proved the following result.

\begin{thm} \label{thm 3.5}
{\it (cf. \cite{Tw01}, Theorem 1)} If Assumption {\bf B} holds, then there exists a positive finite number of orthogonal invariant probability measures $\nu_i, 1\leq i \leq n$.
Moreover, for each $x\in E$ there exists a convex combination $m$ of $(\nu_i)_i$ such that
$$
\dfrac{1}{n}\sum\limits_{k=1}^nP^k(x,A) \mathop{\longrightarrow}\limits_n m(A),
$$ 
for all $A \in \mathcal{B}$.

\end{thm}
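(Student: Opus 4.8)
The plan is to run the two‑step approach of Section 2 for the single kernel $P$, using Theorem \ref{thm 2.3.1} as the main tool, in three stages: first produce one invariant probability measure that is absolutely continuous with respect to a conveniently chosen auxiliary measure $m$; then bound the number of its ergodic components via the uniform countable additivity in Assumption \textbf{B} ii); and finally upgrade to Ces\`aro convergence from every starting point. Note that in Assumption \textbf{B} the Lyapunov function $V$ is real‑valued, so $\{V>t\}\searrow\emptyset$ as $t\to\infty$, a fact used repeatedly; we may also assume $b\ge1$.

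\emph{Step 1: existence.} Iterating the drift inequality gives $P^{k}V\le V-k+b\sum_{j=0}^{k-1}P^{j}\mathbf 1_C$, hence, since $V,P^{k}V\ge0$, the occupation bound $S_N\mathbf 1_C\ge\frac1{bN}(N-V)$ pointwise, where $S_N:=\frac1N\sum_{k=0}^{N-1}P^{k}$; in particular $S_N\mathbf 1_C(x)\ge\frac1{2b}$ once $N\ge2V(x)+2$. By Assumption \textbf{B} ii) the family $\{P(x,\cdot):x\in C\}$ is uniformly countably additive, hence (Bartle--Dunford--Schwartz) uniformly absolutely continuous with respect to a single probability $\lambda$; writing $\Psi(A):=\sup_{x\in C}P(x,A)$, this says $\Psi(A)\to0$ as $\lambda(A)\to0$. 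Set $R:=\sum_{k\ge0}2^{-(k+1)}P^{k}$ and $m:=\lambda\circ R$; then $m\circ P\ll m$ by Remark \ref{rem 2.8} ii), $m$ is a probability, and $m(A)\ge\frac12\lambda(A)$ for all $A$. To check that $m$ is mean almost invariant for $P$, note the elementary decomposition $P^{k}\mathbf 1_A=\int P(y,A)\,P^{k-1}(\cdot,dy)\le\Psi(A)\,P^{k-1}\mathbf 1_C+(1-P^{k-1}\mathbf 1_C)$ for $k\ge1$; averaging over $k=0,\dots,N-1$ and inserting the occupation bound gives $S_N\mathbf 1_A\le(1-\frac1{2b})+\frac1{2b}\Psi(A)$ on $\{2V+2\le N\}$ and $S_N\mathbf 1_A\le1$ off it. Applying $R$, integrating against $\lambda$, and using $\sum_k2^{-(k+1)}\lambda(P^{k}\mathbf 1_{\{V>t\}})=m(\{V>t\})\to0$, one obtains, for all large $N$,
\[
m(S_N\mathbf 1_A)\ \le\ \Big(1-\tfrac1{4b}\Big)\,m(E)+\tfrac1{2b}\,\Psi(A),\qquad A\in\mathcal B .
\]
Here $\delta:=\max\{1-\tfrac1{4b},0\}\in[0,1)$ and $\phi(A):=\tfrac1{2b}\Psi(A)$ is absolutely continuous with respect to $m$, since $m(A)\to0$ forces $\lambda(A)\to0$ and hence $\Psi(A)\to0$. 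Thus $m$ is mean almost invariant for $P$, and Theorem \ref{thm 2.3.1} (iii)$\Rightarrow$(i) yields a nonzero finite $P$‑invariant measure $\ll m$, which we normalize to a probability $\nu_\ast$.

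\emph{Step 2: finitely many ergodic measures.} Let $(\nu_i)_i$ be the ergodic invariant probability measures; the ergodic decomposition of $\nu_\ast$ shows this family is nonempty, and since they are pairwise mutually singular we fix pairwise disjoint $E_i\in\mathcal B$ with $\nu_i(E_i)=1$. Integrating the occupation bound against the invariant $\nu_i$ and letting $N\to\infty$ ($V$ being finite) gives $\nu_i(C)=\nu_i(S_N\mathbf 1_C)\ge\frac1b$. Put $\eta_i(A):=\int_C P(y,A)\,\nu_i(dy)$: by invariance of $\nu_i$ one has $\eta_i\le\nu_i$, so $\eta_i$ is carried by $E_i$ and $\eta_i(E)=\nu_i(C)\ge\frac1b$, while $\eta_i(A)\le\Psi(A)$ for every $A$. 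Uniform absolute continuity of $\Psi$ with respect to $\lambda$ supplies $\delta_0>0$ with $\lambda(A)<\delta_0\Rightarrow\Psi(A)<\frac1b$; applying this with $A=E_i$, where $\Psi(E_i)\ge\eta_i(E_i)=\eta_i(E)\ge\frac1b$, forces $\lambda(E_i)\ge\delta_0$. As the $E_i$ are disjoint and $\lambda$ is a probability, there are at most $1/\delta_0$ of them; denote them $\nu_1,\dots,\nu_n$.

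\emph{Step 3: Ces\`aro convergence, and the main obstacle.} It remains to show that for each $x$ the averages $\frac1N\sum_{k=1}^{N}P^{k}(x,\cdot)$ converge setwise to a convex combination of $\nu_1,\dots,\nu_n$. The occupation bound shows the chain started at $x$ returns to $C$ with positive Ces\`aro frequency, and Assumption \textbf{B} ii) makes $C$ behave like a \emph{petite} set; via the Meyn--Tweedie splitting and recurrence machinery this yields that the chain from $x$ is absorbed, almost surely, into one of the ergodic classes $E_i$, with probabilities $c_i(x)\ge0$, $\sum_i c_i(x)=1$, and on the event of absorption into $E_i$ the empirical averages converge to $\nu_i$ by the ergodic theorem for (Harris) recurrent chains; dominated convergence then gives $\frac1N\sum_{k=1}^{N}P^{k}(x,A)\to\sum_i c_i(x)\nu_i(A)$ for every $A$. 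I expect this last stage to be the genuine obstacle: unlike Step 1 it is not purely measure‑theoretic, because uniform countable additivity of the one‑step kernels on $C$ does not transfer directly to powers or resolvents, so one must import the classical recurrence theory (petite sets, the splitting technique) to make the uniformity usable at the level of the dynamics. Step 1, by contrast, is routine once the auxiliary measure $m=\lambda\circ R$ has been arranged so as to respect simultaneously the control measure $\lambda$ of $\Psi$ and the sub‑level structure of $V$ — which is exactly what Theorem \ref{thm 2.3.1} asks of $m$.
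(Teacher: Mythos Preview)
The paper does not prove Theorem \ref{thm 3.5} at all: it is quoted verbatim from \cite{Tw01} as background (note the attribution ``cf.\ \cite{Tw01}, Theorem 1'' and the sentence ``Under such hypotheses, Tweedie proved the following result'' immediately preceding it). There is therefore no ``paper's own proof'' to compare against; the paper's contribution in this subsection is the \emph{generalization} Theorem \ref{thm 3.8.0} under the weaker Assumption \textbf{C}, together with Proposition \ref{prop 1} iii) showing that Assumption \textbf{B} i) implies Assumption \textbf{C'} ii).

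That said, your Steps 1--2 are essentially a direct instantiation of the paper's machinery and are correct. Step 1 is precisely the route of Theorem \ref{thm 3.8.0} specialized to Assumption \textbf{B}: the drift iteration giving $S_N\mathbf 1_C\ge\frac1b(1-V/N)$ is Proposition \ref{prop 1} iii), the decomposition $P^k\mathbf 1_A\le\Psi(A)P^{k-1}\mathbf 1_C+(1-P^{k-1}\mathbf 1_C)$ is the $\gamma\equiv0$ case of the first displayed line in the proof of Theorem \ref{thm 3.8.0}, and your choice $m=\lambda\circ R$ with $\lambda$ a Bartle--Dunford--Schwartz control measure is exactly the auxiliary-measure construction of Remark \ref{rem 2.8} ii). (One small point: to pass from the pointwise bound on $\{2V+2\le N\}$ to an $m$-bound, write $S_N\mathbf 1_A\le(1-\tfrac1{2b})+\tfrac1{2b}\Psi(A)+\mathbf 1_{\{2V+2>N\}}$ \emph{globally} and then integrate; your phrase ``applying $R$'' is ambiguous otherwise.) Step 2 is a clean finiteness argument in the spirit of Corollary \ref{coro 3.11}, and your use of $\eta_i(A)=\int_CP(y,A)\,\nu_i(dy)\le\nu_i\wedge\Psi$ to force $\lambda(E_i)\ge\delta_0$ is neat.

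Step 3, as you yourself flag, is not a proof but a pointer to the Meyn--Tweedie corpus. The setwise Ces\`aro convergence from \emph{every} starting point is exactly the part of Tweedie's theorem that lies outside the scope of the paper's two-step approach, and the paper makes no attempt to recover it. So your proposal proves the existence and finiteness clauses via the paper's methods, but for the convergence clause you are (correctly) deferring to \cite{Tw01} --- which is also what the paper does for the entire statement.
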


\begin{rem}
i) The uniform countable additivity condition looks easier to check than the smallness property, since e.g. it is clearly satisfied if there exists a finite measure $\nu$ such that $P(x, \cdot) \leq \nu(\cdot)$ for all $x \in C$; see \cite{Tw01}, Remark 5 for more details.

ii) We stress out that in all of the above assumptions one can let $V$ take infinite values because we may consider the restriction of $P$ to the absorbing set $[V<\infty]$ without altering the other conditions.  
\end{rem}

\noindent
{\bf Open question} {\it (cf. \cite{Tw01}, Remark 6):} 
Can we replace the constant $b$ in Assumption {\bf B}, i) by a not necessarily bounded function?

\vspace{0.2cm}
For the rest of this subsection, our main purpose is to recapture the above discussed versions of Harris's result in a single more general statement with a very short proof in terms of almost invariant measures, and also to give an answer for the open question.

For convenience, we denote by $\mathcal{B}_1^+$ the set of all 
$\mathcal{B}$-measurable real-valued functions $f$ such that $0 \leq f \leq 1$, and recall that in Section 2 we introduced the operators $S_n$ and $R$ by setting
$$
S_n = \dfrac{1}{n}\sum\limits_{k=0}^{n-1} P^k, \; \mbox{resp.} \; R = \sum\limits_{k=0}^\infty\dfrac{1}{2^{k+1}}P^k.
$$ 
Let us introduce the following assumptions.

\vspace{0.2cm}
\noindent
{\bf Assumption {\bf C}.} i) There exist $C \in \mathcal{B}$, $\phi:\mathcal{B}_1^+ \rightarrow \mathbb{R}_+$, and $\gamma : E \rightarrow \mathbb{R}_+$ such that
$$
Pf(x) \leq \phi (f) + \gamma (x)
$$ 
for all $x \in C$ and $f \in \mathcal{B}_1^+$.

ii) There exists a finite positive measure $m$ on $E$ such that 

ii.1) $\phi \circ R \ll m \circ R$ (i.e. $\mathop{\lim}\limits_{m \circ R(A) \to 0} \phi \circ R (1_A) = 0$).

ii.2) There exists $n_0 > 0$ such that $\mathop{\sup}\limits_{n \geq n_0} m( S_n (1_C(\gamma-1))) < 0$, where $C$ is the set from i). 

It is convenient to look for $\phi$ which is a real function composed with a measure. 
Also, if $\gamma$ is constant then additional information about the number of the orthogonal measures can be obtained.
For these reasons, we shall consider the following particular case of Assumption {\bf C}.

\vspace{0.2cm}
\noindent
{\bf Assumption C'.} i) There exist a finite positive measure $m$ on $E$, a set $C\in \mathcal{B}$, a function $\phi:[0, \infty) \rightarrow [0, \infty)$ which is continuous and null in $0$, and $\delta \in [0,1)$ such that
$$
Pf(x) \leq \phi(m(f)) + \delta
$$
for all $f \in \mathcal{B}_1^+$ and $x \in C$.

ii) There exists $n_0 > 0$ such that $\mathop{\inf}\limits_{n \geq n_0} m(S_n 1_C)>0$, where $C$ is the set from i).

The following result states that Assumptions {\bf A}, {\bf A'}, and {\bf B} are particular cases of Assumption {\bf C'}.

\begin{prop} \label{prop 1} The following assertions hold.

i) If Assumption {\bf A} is satisfied, then for all $N>0$ there exists $n_0 > 0$ and $\delta \in [0,1)$ such that
$$
P^nf(y) \leq P^mf(x) + \delta
$$
for all $n,m \geq n_0$, $x,y \in [V<N]$, and $f\in \mathcal{B}_1^+$.

\vspace{0.2cm}
ii) If Assumption {\bf A'} is satisfied, then for all $N>0$ there exist $n_0 > 0$ and $\delta \in [0,1)$ such that
$$
S_nf(y) \leq S_mf(x) + \delta
$$
for all $n,m \geq n_0$, $x,y \in [V<N]$, and $f\in \mathcal{B}_1^+$.

\vspace{0.2cm}
In particular, if any of the Assumptions {\bf A} or {\bf A'} is satisfied, then Assumption {\bf C'} holds for all $P^n$ resp. $S_n$ if $n$ is sufficiently large. 

\vspace{0.2cm}
iii) If Assumption {\bf B}, i) is satisfied, then Assumption {\bf C'}, ii) holds for every (non-trivial) measure $m$.
\end{prop}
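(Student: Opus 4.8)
The plan is to prove all three parts with one device. In each case the drift hypothesis, possibly after Ces\`aro averaging, forces the relevant iterate $Q\in\{P^{n},S_{n}\}$ to charge a fixed small set by a definite amount, and the smallness of that set then upgrades this into a \emph{minorization} $Q(x,\cdot)\ge\beta\,\nu(\cdot)$ holding uniformly over all starting points $x$ in the sub-level set and all admissible iterates, where $\nu,\alpha$ are the probability measure and constant witnessing smallness and $\beta\in(0,1)$. Granting this, for two Markovian kernels $Q,Q'$ minorized by $\beta\nu$ at the relevant points one writes $Q(x,\cdot)=\beta\nu+(1-\beta)\widehat Q(x,\cdot)$ with $\widehat Q(x,\cdot)$ a probability measure, similarly $Q'$, and since $0\le f\le1$,
$$
Qf(y)-Q'f(x)=(1-\beta)\bigl(\widehat Qf(y)-\widehat Q'f(x)\bigr)\le 1-\beta ,
$$
which is exactly the asserted inequality with $\delta:=1-\beta\in[0,1)$. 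By the Remark following Theorem \ref{thm 3.5} we may throughout restrict $P$ to the absorbing set $[V<\infty]$.

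For i), iterating the drift of Assumption \textbf{A} gives $P^{n}V\le\gamma^{n}V+\tfrac{b}{1-\gamma}$, so Markov's inequality yields $P^{n}(x,[V\le r])\ge 1-\tfrac{\gamma^{n}N+b/(1-\gamma)}{r}$ for $x\in[V<N]$. Since $r>2b/(1-\gamma)$ one has $\tfrac{b}{(1-\gamma)r}<1$, so we may fix $n_{1}$ with $\gamma^{n_{1}}N+\tfrac{b}{1-\gamma}<r$ and put $\beta_{0}:=1-\tfrac{\gamma^{n_{1}}N+b/(1-\gamma)}{r}\in(0,1)$, getting $P^{n}(x,[V\le r])\ge\beta_{0}$ for all $n\ge n_{1}$, $x\in[V<N]$. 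Using that $[V\le r]$ is small, for $m\ge n_{0}:=n_{1}+1$ and $x\in[V<N]$,
$$
P^{m}(x,\cdot)\ \ge\ \int_{[V\le r]}P(z,\cdot)\,P^{m-1}(x,dz)\ \ge\ \alpha\,P^{m-1}(x,[V\le r])\,\nu(\cdot)\ \ge\ \alpha\beta_{0}\,\nu(\cdot),
$$
so the device applies with $\beta=\alpha\beta_{0}$ and gives $P^{n}f(y)\le P^{m}f(x)+(1-\alpha\beta_{0})$ for all $n,m\ge n_{0}$, $x,y\in[V<N]$, $f\in\mathcal B_{1}^{+}$.

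For ii) note first $\widetilde b>0$ (otherwise $P^{n}\widetilde V\le\widetilde\gamma^{n}\widetilde V\to0$, impossible as $P^{n}\widetilde V\ge1$). Applying the positive operator $P^{k}$ to the drift of Assumption \textbf{A'} gives $P^{k+1}\widetilde V\le\widetilde\gamma P^{k}\widetilde V+\widetilde b\,P^{k}1_{S}$, and since $P^{k}\widetilde V\ge1$ this gives $P^{k+1}\widetilde V-P^{k}\widetilde V\le-(1-\widetilde\gamma)+\widetilde b\,P^{k}1_{S}$; summing over $k<n$ and dropping $P^{n}\widetilde V\ge0$ yields $S_{n}1_{S}(x)\ge\tfrac{1-\widetilde\gamma}{\widetilde b}-\tfrac{\widetilde V(x)}{\widetilde b\,n}$, hence $S_{n}1_{S}(x)\ge c_{0}$ for some $c_{0}\in(0,1]$, all $n\ge n_{1}$, $x\in[\widetilde V<N]$. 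Using smallness of $S$, $(m{+}1)S_{m+1}(x,\cdot)=\sum_{k=0}^{m}P^{k}(x,\cdot)\ge\sum_{j=0}^{m-1}P^{j+1}(x,\cdot)\ge\alpha\nu(\cdot)\sum_{j=0}^{m-1}P^{j}(x,S)=\alpha\,m\,S_{m}1_{S}(x)\,\nu(\cdot)$, so $S_{m}(x,\cdot)\ge\beta\nu(\cdot)$ with $\beta:=\alpha c_{0}/2$ for $m\ge n_{0}:=n_{1}+1$, $x\in[\widetilde V<N]$, and the device gives the claim for $S_{n}$. For iii), one has $b>0$ (else $P^{n}V\le V-n<0$ eventually); applying $P^{k}$ and using $P1=1$ gives $P^{k+1}V\le P^{k}V-1+b\,P^{k}1_{C}$, so summing over $k<n$ and dropping $P^{n}V\ge0$ gives $S_{n}1_{C}(x)\ge\tfrac1b\bigl(1-\tfrac{V(x)}{n}\bigr)$; thus $\liminf_{n}S_{n}1_{C}\ge\tfrac1b$ on $[V<\infty]$ and, by Fatou, $\liminf_{n}m(S_{n}1_{C})\ge\tfrac1b m(E)>0$ for every non-trivial finite measure $m$, whence $\inf_{n\ge n_{0}}m(S_{n}1_{C})>0$ for some $n_{0}$, which is Assumption \textbf{C'}, ii).

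For the ``in particular'' clause, fix a large $N$, a point $x_{0}$ in the sub-level set $[V<N]$, and set $m:=R(x_{0},\cdot)$ with $R=\sum_{k\ge0}2^{-k-1}P^{k}$; since $P^{k}(x_{0},\cdot)\le 2^{k+1}m$, taking both iterate indices equal in the inequality of i) (resp.\ ii)) gives $P^{n}f(y)\le 2^{n+1}m(f)+\delta$ (resp.\ $S_{n}f(y)\le 2^{n+1}m(f)+\delta$) for $y$ in the sub-level set, which is Assumption \textbf{C'}, i) for $P^{n}$ (resp.\ $S_{n}$) with $\phi(t)=2^{n+1}t$ and $C$ the sub-level set; and the iterated drift bound, integrated against $m$ (for which $m(V)=RV(x_{0})<\infty$), shows that $C$ is visited by the chain $P^{n}$ (resp.\ $S_{n}$) with $m$-frequency bounded away from $0$ once $N$ is chosen large, which is Assumption \textbf{C'}, ii). The only genuine obstacle is producing the minorization $Q(x,\cdot)\ge\beta\nu(\cdot)$ uniformly in both the starting point $x$ in the sub-level set and the iterate index — and, in ii), carrying out the mildly delicate index bookkeeping that turns the occupation estimate $S_{n}1_{S}\ge c_{0}$ into a genuine minorization of $S_{m}(x,\cdot)$; the remaining steps (the two-term split, Fatou, and reading off \textbf{C'} from the displayed inequalities) are routine.
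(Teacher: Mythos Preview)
Your proof is correct and follows the same underlying strategy as the paper: use the drift to show that the iterates (or Ces\`aro averages) place uniformly positive mass on the small set, then combine this with smallness to get a comparison between any two such iterates. The main difference is in packaging. You first upgrade the occupation bound to a genuine minorization $Q(x,\cdot)\ge\beta\nu(\cdot)$ uniformly in $x$ and the iterate index, and then apply the Doeblin split $Q=\beta\nu+(1-\beta)\widehat Q$ to read off $Qf(y)-Q'f(x)\le 1-\beta$ in one line. The paper instead chains the inequalities: from smallness it gets $Pf(x)\le Pf(y)+1-\alpha$ on the small set, extends this to $E$, integrates against $P^{n-1}(x,\cdot)$, replaces $f$ by $1-f$, and integrates again against $P^{m-1}(y,\cdot)$ to reach $P^mf(y)\le P^nf(x)+1-\alpha\,P^{n-1}1_C(x)\,P^{m-1}1_C(y)$. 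Your route is cleaner and yields the slightly better constant $\delta=1-\alpha\beta_0$ instead of $1-\alpha\beta_0^2$.

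For iii) the paper avoids Fatou by truncating to $[V\le n_0]$ and getting a uniform bound for all $n\ge 2n_0$ directly; your Fatou argument is equally valid. For the ``in particular'' clause the paper makes the simpler choice $m=\delta_{y}\circ P^n$ with $y$ in the small set $C=[V\le r]$ and $\phi(t)=t$, so that $P^nf(y)=m(f)$ exactly and no factor $2^{n+1}$ appears; your choice $m=R(x_0,\cdot)$ also works (and has the virtue of matching the auxiliary measure used elsewhere), though the verification of \textbf{C'}, ii) for the kernel $P^n$ that you sketch deserves one more line: from $(P^n)^jV\le\gamma^{nj}V+b/(1-\gamma)$ and $m(V)<\infty$ one gets $m\bigl((P^n)^j1_{[V<N]}\bigr)\ge 1-\gamma^{nj}m(V)/N-b/((1-\gamma)N)$, hence the Ces\`aro averages stay above $1-b/((1-\gamma)N)>0$ for $N$ large.
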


\begin{proof} We shall prove only i) and iii), since the second assertion can be easily proved using the same ideas involved for proving the other two.

i). Iterating the relation $PV \leq \gamma V+b$, we get that for $n > 0$, $P^nV \leq \gamma^nV + \dfrac{b}{1-\gamma}$ and 
$$
P^n([V>r]) \leq \dfrac{1}{r}P^nV \leq \dfrac{1}{r}(\gamma^nV + \dfrac{b}{1-\gamma}) \leq \dfrac{\gamma^nV}{r} + \dfrac{1}{2}. \leqno{(\ast)}
$$
On the other hand, we know that $C:=[V \leq r]$ is small, so there exist a constant $\alpha \in (0, 1]$ and a probability $\nu$ such that
$Pf(y)\geq \alpha \nu(f)$ for all $y \in C$ and $f \in \mathcal{B}_1^+$.
Taking in this inequality $1-f$ instead of $f$, we obtain $Pf(y) \leq 1-\alpha +\alpha\nu(f)$ for all $y \in C$.
Combining the last two inequalities it follows that $Pf(x)\leq Pf(y)+1-\alpha$
for all $x,y \in C$, hence
$$
Pf \leq Pf(y)+1-\alpha 1_C \;\;\; {\rm on} \; E
$$
for all $y \in C$.
Integrating this inequality w.r.t $P^{n-1}(x,\cdot)$, $x\in E$ we obtain
$$
P^nf \leq Pf(y) + 1 - \alpha P^{n-1}1_C \;\;\; {\rm on} \; E 
$$
for all $y \in C$ and $n>0$.
Replacing $f$ with $1-f$ we get
$$
Pf \leq P^nf(x) + 1 - \alpha P^{n-1}1_C(x)1_C 
$$
for all $x \in E$,
and again integrating the last inequality but now w.r.t $P^{m-1}(y,\cdot)$ we obtain
$$
P^mf(y) \leq P^nf(x) + 1 - \alpha P^{n-1}1_C(x)P^{m-1}1_C(y)
$$
for all $x,y\in E$, $f\in \mathcal{B}_1^+$, and $n, m >0$.
Now, the assertion follows if we combine the last inequality with relation ($\ast$), since the coefficient 
of $\alpha$ is far away from $0$ for all $n$ and $m$ sufficiently large, uniformly in $x,y \in [V < N]$.

The fact that Assumption {\bf A} implies {\bf C'} for $P^n$ follows by choosing $\phi(x)=x$, $C=[V\leq r]$, and $m=\delta_y \circ P^n$ for some arbitrarily fixed $y \in C$, and taking into accout relation ($\ast$).

\vspace{0.2cm}
iii). Let $\mu$ be a non-zero finite measure.
Since $V < \infty$, there exists $n_0>0$ such that $\mu([V\leq n_0])\geq \varepsilon >0$.
Now, iterating the relation $PV \leq V-1+b1_C$ we get that $P^nV \leq V-n+ b\sum\limits_{k=0}^{n-1}P^k(1_C)$, hence 
$S_n(1_C)\geq \dfrac{1}{b}(1-\frac{V}{n})$ and therefore
$$
1_{[V\leq n_0]}S_n(1_C)\geq \dfrac{1}{2b}1_{[V\leq n_0]}
$$
for all $n\geq 2n_0$. 
Integrating the last inequality with respect to $\mu$ we conclude that
$$
\inf\limits_{n\geq 2n_0}\mu(S_n(1_C))\geq \dfrac{1}{2b}\mu([V\leq n_0]) \geq \dfrac{\varepsilon}{2b}>0
$$
which proves the assertion.

\end{proof}

\begin{rem} \label{rem 3.8}
Often, the sublevel sets $[V\leq r]$ of the Lyapunov function $V$ which appears in Assumption {\bf A} are small for all sufficiently large $r$.
In this case, one can easily adapt the proof of Proposition \ref{prop 1}, i) to show that Assumption {\bf C'} holds for $P$, not just for $P^n$ with $n$ big enough.

\end{rem}

We can now state the main result of this subsection.

\begin{thm} \label{thm 3.8.0} 
If Assumption {\bf C} is satisfied, then $m\circ R$ is mean almost invariant.
\end{thm}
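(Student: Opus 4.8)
The plan is to check the definition of mean almost invariance directly for the single operator $P$ and the measure $\widetilde m:=m\circ R$, where $m$ is the measure provided by Assumption~\textbf{C}, ii), and $R=\sum_{k\ge0}2^{-(k+1)}P^k$. First I would note that $\widetilde m$ is an admissible measure for this statement, i.e.\ $P$ respects the $\widetilde m$-classes: if $\widetilde m(A)=0$ then $\sum_{k}2^{-(k+1)}m(P^k1_A)=0$, so $m(P^k1_A)=0$ for every $k\ge0$, whence $\widetilde m(P1_A)=\sum_k2^{-(k+1)}m(P^{k+1}1_A)=0$.

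The core is a pointwise estimate. For any $h\in\mathcal{B}_1^+$, combining $Ph\le1$ on $E$ with Assumption~\textbf{C}, i) (i.e.\ $Ph\le\phi(h)+\gamma$ on $C$) gives
$$
Ph\ \le\ 1+1_C\big(\phi(h)+\gamma-1\big)\quad\text{on }E.
$$
Applying the positive operator $P^{k-1}$ with $h=R1_A\in\mathcal{B}_1^+$, and using linearity and $P^{k-1}1=1$, for each $k\ge1$ we get
$$
P^k(R1_A)\ \le\ 1+\phi(R1_A)\,P^{k-1}1_C+P^{k-1}\big(1_C(\gamma-1)\big).
$$
Summing over $k=0,\dots,n-1$ (the term $k=0$ being $R1_A\le1$) and dividing by $n$,
$$
S_n(R1_A)\ \le\ 1+\tfrac{n-1}{n}\,\phi(R1_A)\,S_{n-1}1_C+\tfrac{n-1}{n}\,S_{n-1}\big(1_C(\gamma-1)\big).
$$
Integrating against $m$, and using that $S_n$ and $R$ commute on positive functions so that $m\big(S_n(R1_A)\big)=m\big(R(S_n1_A)\big)=\widetilde m(S_n1_A)$, together with $m(S_{n-1}1_C)\le m(E)=\widetilde m(E)$ and Assumption~\textbf{C}, ii.2) --- which yields some $\varepsilon>0$ with $m\big(S_{n-1}(1_C(\gamma-1))\big)\le-\varepsilon$ for all $n-1\ge n_0$, and in particular $m\big(S_{n-1}(1_C\gamma)\big)<\infty$, legitimizing the splitting above under $m$ --- I obtain, for all $n\ge n_1:=\max\{2,n_0+1\}$,
$$
\widetilde m(S_n1_A)\ \le\ m(E)-\tfrac{\varepsilon}{2}+m(E)\,(\phi\circ R)(1_A).
$$

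Finally I read off the constants: set $\delta:=1-\tfrac{\varepsilon}{2m(E)}$ and $\psi(A):=m(E)\,(\phi\circ R)(1_A)$. Then $\psi\ge0$ and $\psi\ll\widetilde m$ by Assumption~\textbf{C}, ii.1); and $\delta<1$, while $\delta\ge0$ since $1_C(\gamma-1)\ge-1_C$ forces $\varepsilon\le m(E)$. Hence $\widetilde m(S_n1_A)\le\delta\,\widetilde m(E)+\psi(A)$ for all $n\ge n_1$, i.e.\ $m\circ R$ is mean almost invariant for $P$. The step I expect to be the real obstacle --- and the reason for applying $P^{k-1}$ to the basic inequality \emph{before} averaging --- is to produce the term $S_{n-1}\big(1_C(\gamma-1)\big)$ instead of the useless constant $1_C(\gamma-1)$, because only the former is controlled by Assumption~\textbf{C}, ii.2); making sense of it relies on the finiteness $m\big(S_{n-1}(1_C\gamma)\big)<\infty$ forced by ii.2), which is precisely what permits $\gamma$ to be unbounded and thereby answers Tweedie's open question.
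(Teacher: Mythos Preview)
Your proof is correct and follows essentially the same route as the paper's: establish the pointwise bound $Pf \le 1 + \phi(f)1_C + 1_C(\gamma-1)$, push it through $P^{k-1}$, take Ces\`aro means, integrate against $m$, and finally pass to $m\circ R$ via the commutation $S_nR = RS_n$. The only cosmetic differences are that you substitute $f = R1_A$ at the outset rather than at the end, and that your slightly sharper pointwise bound (keeping $\phi(h)1_C$ rather than $\phi(f)$) lets you avoid the extra $1/n$ term in the final choice of $\delta$; you also make explicit the integrability check $m(S_{n-1}(1_C\gamma)) < \infty$ needed to split the integral, a point the paper leaves implicit.
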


\begin{proof}
With the set $C$ given by the hypothesis, we have for all $f \in \mathcal{B}^+_1$ that
$Pf \leq \phi(f) + \gamma 1_C + 1_{E \setminus C}$ which leads to 
$$
P^kf \leq \phi(f) + P^{k-1}(\gamma 1_C) + P^{k-1}1_{E\setminus C}, \;\;\; k>0.
$$
Considering the Cesaro means, we obtain that
$$
S_nf = \dfrac{1}{n}\sum\limits_{k=0}^{n-1}P^kf \leq \dfrac{1}{n}f + \dfrac{n-1}{n}\phi(f)+\dfrac{n-1}{n}S_{n-1}(\gamma 1_C) + \dfrac{n-1}{n}S_{n-1}(1_{E\setminus C})
$$
$$
\leq \dfrac{1}{n} + \phi(f) + S_{n-1}(1_C(\gamma-1)) + 1
$$
Integrating with respect to $m$ it leads to
$$
m(S_nf)\leq m(E)\phi(f) + \{m(S_{n-1}(1_C(\gamma-1)))m(E)^{-1} + (1+\frac{1}{n})\}m(E).
$$
Now by hypothesis, the term in brackets is strictly less then $1$ for all sufficiently large $n$, uniformly in $n$.
Hence there exist $\delta \in [0,1)$ and $n_0$ such that for all $n \geq n_0$ we have
$$
m(S_nf) \leq m(E)\phi(f) + \delta m(E).
$$
By replacing $f$ with $Rf$ in the last inequality we obtain for all $f\in \mathcal{B}_1^+$ and $n \geq n_0$ that
$$
m\circ R(S_nf) \leq m(E)\phi\circ R(f) + \delta m\circ R(E).
$$
Taking into account Remark \ref{rem 2.8}, ii), it follows that $m\circ R$ is mean almost invariant.

\end{proof}

We recall the following condition.

\vspace{0.2cm}
\noindent
{\bf Generalized drift condition.} There exist two measurable functions $V,b:E \rightarrow [0, \infty)$, and a measurable set $C$ such that
$$
PV \leq V  -1 + b 1_C \, \mbox{ on } C.
$$

\noindent
Next, we consider an integrability assumption for $b$ that appears in the generalized drift condition, with respect to the measure $m$ involved in Condition {\bf C'}, i).

\noindent
{\bf Condition D.} For all $r>0$ there exists $N_0 > 0$ such that
$$
\sup\limits_{n\geq N_0}m(1_{[V\leq r]}S_n(b^2)) < \infty.
$$

\begin{prop} \label{prop 2}
Let $m$ be a non-trivial finite measure.
Assume that the generalized drift condition and Condition {\bf D} hold.

Then Assumption {\bf C'}, ii) is satisfied w.r.t $m$.
\end{prop}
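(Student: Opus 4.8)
The plan is to extract from the generalized drift condition a pointwise lower bound for the Ces\`aro averages $S_n(b1_C)$, and then to convert it into the required lower bound on $m(S_n1_C)$ by applying the Cauchy--Schwarz inequality twice, first on the kernels $P^k$ and then on the measure $m$; the square $b^2$ in Condition \textbf{D} is exactly what makes the second of these steps work.

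\emph{Step 1 (iterating the drift).} Applying the positive operator $P^k$ to the drift inequality $PV\le V-1+b1_C$ and using $P^k1=1$, one obtains $P^{k+1}V\le P^kV-1+P^k(b1_C)$; summing over $k=0,\dots,n-1$ telescopes the left-hand side to $P^nV-V$, and since $P^nV\ge 0$ this gives
$$S_n(b1_C)=\frac1n\sum_{k=0}^{n-1}P^k(b1_C)\ \ge\ 1-\frac Vn\qquad\text{on }E.$$
(Here $V$ is finite-valued; should one wish to allow $V=+\infty$, one first restricts $P$ to the absorbing set $[V<\infty]$, as in the remark following Theorem \ref{thm 3.5}.) In particular, for fixed $r>0$ we have $S_n(b1_C)\ge\tfrac12$ on the sublevel set $[V\le r]$ as soon as $n\ge 2r$.

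\emph{Step 2 (Cauchy--Schwarz, and conclusion).} The Schwarz inequality for the probability kernel $P^k(x,\cdot)$ gives $P^k(b1_C)\le(P^kb^2)^{1/2}(P^k1_C)^{1/2}$ pointwise, and a further Cauchy--Schwarz in the index $k$ yields $S_n(b1_C)\le(S_nb^2)^{1/2}(S_n1_C)^{1/2}$; combined with Step 1, on $[V\le r]$ and for $n\ge 2r$ this gives $m$-a.e.
$$S_n1_C\ \ge\ \frac{\big(S_n(b1_C)\big)^2}{S_nb^2}\ \ge\ \frac1{4\,S_nb^2},$$
the division being legitimate since, for $n$ beyond the threshold $N_0=N_0(r)$ provided by Condition \textbf{D}, the function $S_nb^2$ is $m$-integrable (hence $m$-a.e.\ finite) on $[V\le r]$ and, by Step 1, also $m$-a.e.\ strictly positive there. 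Integrating against $m$ and invoking the Cauchy--Schwarz inequality for the finite measure $m$ in the form $m([V\le r])^2\le m\big(1_{[V\le r]}S_nb^2\big)\,m\big(1_{[V\le r]}(S_nb^2)^{-1}\big)$, and writing $M_r:=\sup_{n\ge N_0}m\big(1_{[V\le r]}S_nb^2\big)<\infty$ (Condition \textbf{D}), one finds, for every $n\ge\max(N_0,2r)$,
$$m(S_n1_C)\ \ge\ m\big(1_{[V\le r]}S_n1_C\big)\ \ge\ \tfrac14\,m\big(1_{[V\le r]}(S_nb^2)^{-1}\big)\ \ge\ \frac{m([V\le r])^2}{4M_r}.$$
Since $m$ is non-trivial and $[V\le r]\uparrow E$, one may fix $r$ with $m([V\le r])>0$; then $n_0:=\max(N_0(r),2r)$ witnesses $\inf_{n\ge n_0}m(S_n1_C)>0$, which is Assumption \textbf{C'}, ii).

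\emph{Main obstacle.} The substantive step is the passage in Step 2 from the $b$-weighted occupation $S_n(b1_C)$, which the drift controls directly, to the genuine occupation $S_n1_C$ of $C$; Cauchy--Schwarz is the natural device, and this is precisely why Condition \textbf{D} is formulated with $b^2$ in place of $b$. The attendant bookkeeping---checking $S_nb^2\in(0,\infty)$ $m$-a.e.\ on $[V\le r]$ so that the pointwise bound may be integrated---requires a little care, while the telescoping in Step 1 is routine.
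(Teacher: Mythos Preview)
Your proof is correct and follows essentially the same route as the paper: iterate the drift to obtain $S_n(b1_C)\ge 1-V/n$, apply Cauchy--Schwarz for the probability kernel to get $S_n(b1_C)\le (S_nb^2)^{1/2}(S_n1_C)^{1/2}$, and then use Cauchy--Schwarz with respect to $m$ together with Condition~\textbf{D} to conclude. The only cosmetic differences are that (i) the kernel inequality $S_n(b1_C)\le (S_nb^2)^{1/2}(S_n1_C)^{1/2}$ follows in one step from Cauchy--Schwarz for the probability measure $S_n(x,\cdot)$, without splitting into $P^k$ and then summing, and (ii) the paper applies the $m$-Cauchy--Schwarz directly to the product $(S_nb^2)^{1/2}(S_n1_C)^{1/2}$ rather than first dividing pointwise; this sidesteps your bookkeeping about $S_nb^2\in(0,\infty)$, though your justification of that point is fine.
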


\begin{proof}
As in the beginning of the proof for Proposition \ref{prop 1}, iii), and by Cauchy-Schwartz inequality, we obtain that
$$
S_n(b^2)^{\frac{1}{2}}S_n(1_C)^{\frac{1}{2}} \geq S_n(b1_C)\geq 1-\frac{V}{n},
$$ hence
$$
1_{[V\leq n_0]}S_n(b^2)^{\frac{1}{2}}S_n(1_C)^{\frac{1}{2}} \geq \dfrac{1}{2b}1_{[V\leq n_0]}
$$
for all $n\geq 2n_0$, where $n_0$ is such that $m([V \leq n_0]) \geq \epsilon > 0$.
By applying one more time the Cauchy-Schwartz inequality w.r.t. $m$ from this time, it follows that
$$
m(S_n(1_C)) \geq \dfrac{\epsilon^2}{4b^2m(1_{[V\leq n_0]}S_n(b^2))}
$$
for all $n\geq 2n_0$.

The result now follows due to the hypotheses.
\end{proof}

The announced answer to Tweedie's question is now a collection of the above results. To make it more clear, we consider:

\vspace{0.2cm}
\noindent
{\bf Condition E.} Assume that the generalized drift condition, Condition {\bf D}, and Assumption {\bf C'}, i) are verified.

\begin{coro} \label{coro 3.10}
If Condition {\bf E} is satisfied then $m \circ R$ is almost invariant.
\end{coro}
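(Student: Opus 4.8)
The plan is to show that Condition \textbf{E} supplies exactly the two ingredients of Assumption \textbf{C}, so that Theorem \ref{thm 3.8.0} applies and yields that $m \circ R$ is mean almost invariant; then I would upgrade ``mean almost invariant'' to ``almost invariant'' using the version of Theorem \ref{thm 2.3} stated for a single operator (Theorem \ref{thm 2.3.1}), applied to $P$ acting with respect to the auxiliary measure $m \circ R$ (which is indeed an auxiliary measure for $P$ by Remark \ref{rem 2.8}, ii)).

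First I would verify Assumption \textbf{C}, i): this is literally Assumption \textbf{C'}, i), which is part of Condition \textbf{E}. Writing $Pf(x) \leq \phi(m(f)) + \delta$ for $x \in C$ and $f \in \mathcal{B}_1^+$, set $\widetilde{\phi}(f) := \phi(m(f))$ and $\gamma := \delta$ (a constant function); then $\widetilde{\phi}$ is absolutely continuous with respect to $m$ in the required sense because $\phi$ is continuous and null at $0$, and this absolute continuity transfers to $\widetilde{\phi}\circ R \ll m \circ R$ since $m(Rf) = 0$ forces $m(f)=0$ (as $R = \sum_k 2^{-(k+1)} P^k$ and each term is positive), giving Assumption \textbf{C}, ii.1). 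For Assumption \textbf{C}, ii.2) I need $\sup_{n \geq n_0} m(S_n(1_C(\gamma - 1))) < 0$; since $\gamma = \delta$ is constant with $\delta < 1$, this reads $(\delta - 1)\sup_{n\geq n_0} m(S_n 1_C) < 0$, i.e. it suffices that $\inf_{n \geq n_0} m(S_n 1_C) > 0$ for some $n_0$, which is precisely the conclusion of Proposition \ref{prop 2} — and Proposition \ref{prop 2} applies because Condition \textbf{E} assumes the generalized drift condition and Condition \textbf{D}. Thus all hypotheses of Theorem \ref{thm 3.8.0} hold, so $m \circ R$ is mean almost invariant for $P$.

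Finally, applying Theorem \ref{thm 2.3.1} (equivalence of iii) and ii)) to the operator $P$ and the measure $m \circ R$, ``$m \circ R$ mean almost invariant for $P$'' implies ``$m \circ R$ almost invariant for $P$,'' which is the claim. (Implicitly one also records, via the equivalence with i), that a nonzero finite invariant measure for $P$ absolutely continuous with respect to $m \circ R$ exists; combined with Proposition \ref{prop 2.3} this gives an invariant measure for the underlying semigroup when $P = P_{t_0}$, but the stated corollary only asserts the almost invariance.) The main obstacle is the bookkeeping in checking ii.2) of Assumption \textbf{C}: one must make sure the constant $\delta$ from \textbf{C'} i) and the positivity $\inf_{n\geq n_0} m(S_n 1_C)>0$ from Proposition \ref{prop 2} can be arranged simultaneously for a common range of $n$, which is routine since both are ``for all large $n$'' statements, but it is the only place where the three sub-hypotheses of Condition \textbf{E} genuinely interact.
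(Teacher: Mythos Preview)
Your proof is correct and follows the paper's route (Proposition~\ref{prop 2} $\Rightarrow$ Assumption~\textbf{C'} holds $\Rightarrow$ Theorem~\ref{thm 3.8.0} applies), but you are more careful than the paper itself on one point. The paper's two-line proof ends with ``the result follows by Theorem~\ref{thm 3.8.0},'' yet that theorem only yields that $m\circ R$ is \emph{mean} almost invariant, whereas the corollary claims \emph{almost} invariant; your final invocation of the equivalence (ii)$\Leftrightarrow$(iii) in Theorem~\ref{thm 2.3.1} is precisely the missing step, and it is legitimate because $m\circ R$ is an auxiliary measure for $P$ by Remark~\ref{rem 2.8},~ii). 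So your argument actually closes a small gap the paper leaves implicit.

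One cosmetic remark: your justification of Assumption~\textbf{C},~ii.1) via ``$m(Rf)=0$ forces $m(f)=0$'' is a detour. Since $\widetilde{\phi}\circ R(1_A)=\phi(m(R1_A))=\phi(m\circ R(A))$, the required absolute continuity $\widetilde{\phi}\circ R\ll m\circ R$ follows immediately from the continuity of $\phi$ at $0$ with $\phi(0)=0$; no comparison of $m$- and $m\circ R$-null sets is needed.
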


\begin{proof}
By the hypothesis and Proposition \ref{prop 2} we have that Condition {\bf C'}  is verified.
Now, the result follows by Theorem \ref{thm 3.8.0}.
\end{proof}

Recall that a set $A \in \mathcal{B}$ is called {\it absorbing} if $P(A,x) = 1$ on $A$.
In probabilistic terms, this means that if the process starts from $A$ it remains in $A$.

\begin{coro} \label{coro 3.11}
Let $E$ be a universally measurable separable metric space. 
Consider that Assumption {\bf C'}, i) (and hence ii)) holds for $C=E$, and the function $\phi$ has an increasing inverse.
Then $m\circ R$ is mean almost invariant and the number of all orthogonal invariant probability measures is less than $\dfrac{m(E)}{\phi^{-1}(1-\delta)}$.
Consequently, if $\phi(\frac{m(E)}{2})<1-\delta$ then there is a unique invariant measure (hence ergodic).
\end{coro}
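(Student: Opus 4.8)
The plan is to verify that the hypotheses place us within the scope of Theorem \ref{thm 3.8.0}, then to read off existence of an invariant probability measure from Theorem \ref{thm 2.3.1}, and finally to bound the number of mutually singular invariant probability measures by an elementary disjointness estimate based directly on Assumption \textbf{C'}, i).

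\emph{From \textbf{C'} to \textbf{C} and mean almost invariance.} When $C=E$, part ii) of Assumption \textbf{C'} is automatic, since $m(S_n1_E)=m(E)>0$ for every $n$. To invoke Theorem \ref{thm 3.8.0} I would exhibit \textbf{C'}, i) with $C=E$ as an instance of Assumption \textbf{C}: take the constant $\gamma\equiv\delta\in[0,1)$ and the functional $\mathcal{B}_1^+\ni f\mapsto\phi(m(f))$ in the role of the abstract $\phi$ of \textbf{C}. Then condition ii.1) of \textbf{C} becomes $\lim_{m\circ R(A)\to0}\phi(m\circ R(A))=0$, which holds because $\phi$ is continuous with $\phi(0)=0$ and $m(R1_A)=m\circ R(A)$; and ii.2) becomes $\sup_n m(S_n(1_E(\delta-1)))=(\delta-1)m(E)<0$. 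Theorem \ref{thm 3.8.0} then gives that $m\circ R$ is mean almost invariant. Since $m\circ R\ge\tfrac12 m$, the kernel $P$ respects the $(m\circ R)$-classes (cf.\ Remark \ref{rem 2.8}, ii)), so Theorem \ref{thm 2.3.1} applies with $m\circ R$ in place of $m$ and produces a nonzero finite $P$-invariant measure, absolutely continuous with respect to $m\circ R$; in particular an invariant probability measure exists.

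\emph{Counting orthogonal invariant probability measures.} Let $\nu_1,\dots,\nu_k$ be mutually singular $P$-invariant probability measures and fix pairwise disjoint $A_1,\dots,A_k\in\mathcal{B}$ with $\nu_i(A_i)=1$. From invariance and $P1_{A_i}\le1$ we get $\int P1_{A_i}\,d\nu_i=\nu_i(A_i)=1$, so $P1_{A_i}=1$ holds $\nu_i$-a.e.; evaluating Assumption \textbf{C'}, i) at such a point and at $f=1_{A_i}\in\mathcal{B}_1^+$ yields $1\le\phi(m(A_i))+\delta$, whence $m(A_i)\ge\phi^{-1}(1-\delta)$ because $\phi^{-1}$ is increasing. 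Summing over the disjoint sets $A_i$ gives $k\,\phi^{-1}(1-\delta)\le\sum_{i=1}^k m(A_i)\le m(E)$, i.e.\ $k\le m(E)/\phi^{-1}(1-\delta)$. If moreover $\phi(m(E)/2)<1-\delta$, then $\phi^{-1}(1-\delta)>m(E)/2$, forcing $k\le1$; together with the existence obtained above there is exactly one invariant probability measure. Since on a universally measurable separable metric space the ergodic probability measures are precisely the extreme points of the convex set of invariant probability measures, and this set is now a singleton, its unique element is extreme, hence ergodic.

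\emph{Main obstacle.} There is no deep difficulty once Theorems \ref{thm 3.8.0} and \ref{thm 2.3.1} are in hand; the care needed is entirely bookkeeping: checking that \textbf{C'} with $C=E$ genuinely satisfies the absolute-continuity clause ii.1) of \textbf{C}, verifying that $P$ respects $(m\circ R)$-classes so that Theorem \ref{thm 2.3.1} may be applied with $m\circ R$, and running the counting estimate against the original measure $m$ of \textbf{C'} (not $m\circ R$), so that the stated bound $m(E)/\phi^{-1}(1-\delta)$ emerges. A minor point to watch is whether the inequality $k\le m(E)/\phi^{-1}(1-\delta)$ can be sharpened to a strict one; for the uniqueness statement it is already enough.
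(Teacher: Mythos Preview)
Your route to the counting bound is correct and, in fact, a little cleaner than the paper's. The paper invokes \cite{BeCiRo15}, Proposition 2.4, to produce an absorbing set $A$ inside the support of each invariant probability, and then reads off $1=P1_A(x)\leq\phi(m(A))+\delta$ from Assumption~{\bf C'}, i) at a point $x\in A$. You bypass absorbing sets entirely: from $\int P1_{A_i}\,d\nu_i=\nu_i(A_i)=1$ and $P1_{A_i}\leq 1$ you obtain $P1_{A_i}=1$ $\nu_i$-a.e., and any such point feeds directly into Assumption~{\bf C'}, i). Both arguments yield $m(A_i)\geq\phi^{-1}(1-\delta)$, and since your sets $A_i$ are pairwise disjoint by construction, the summation step is immediate.

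There is, however, a genuine gap in your passage from the counting bound to uniqueness. You establish that any family of \emph{mutually singular} invariant probabilities has at most one element, and then write ``together with the existence obtained above there is exactly one invariant probability measure''. But two distinct invariant probabilities need not be mutually singular, so the bound $k\leq 1$ on singular families does not by itself rule out the existence of several (mutually non-singular) invariant probabilities. The paper closes this gap in two steps: distinct \emph{extremal} invariant probabilities are always mutually singular (cf.\ \cite{BeCiRo15}, Proposition 4.4), so the bound forces at most one extremal invariant probability; and since every invariant probability can be represented as a mixture of extremal ones (cf.\ \cite{Ma77}), uniqueness of the invariant probability follows. Your final sentence does mention extreme points and ergodicity, but the logic is inverted---you first assert that the set of invariant probabilities ``is now a singleton'' and only then invoke extremality, whereas the argument must run the other way: use the bound on singular families to pin down the extremal measures first, then pull uniqueness back via the integral representation. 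Inserting this two-line ergodic-decomposition argument between $k\leq 1$ and the uniqueness claim fixes the proof.
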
 

\begin{proof} 
The fact that $m\circ R$ is mean almost invariant follows by Theorem \ref{thm 3.8.0}.
Using e.g. \cite{BeCiRo15}, Proposition 2.4, one can show that the support of an invariant probability measure contains an absorbing set of total mass equal to $1$. 
But if $A\in \mathcal{B} $ is absorbing and $x \in A$,
 then  $1=P1_A(x) \leq \phi(m(A)) + \delta$, hence $m(A) \geq \phi^{-1}(1-\delta)$ and the proof for the first assertion follows.
Now, clearly $\phi(\frac{m(E)}{2})<1-\delta$ is the same with $\dfrac{m(E)}{\phi^{-1}(1-\delta)}<2$, hence there can not be two orthogonal invariant measures.
On the other hand (cf. e.g. \cite{BeCiRo15}, Proposition 4.4), any two distinc extremal invariant probability measures are singular.
This means that there is a unique extremal invariant probability measure.
The uniqueness of the invariant probability measure follows by the fact that all invariant probability measures can be represented by means of the extremal ones; see e.g. \cite{Ma77}. 

\end{proof}

\subsection{Harnack type inequalities and almost invariant measures}

\noindent
{\bf Applications to nonlinear SPDEs.} 
Let $V \subset H \equiv H^{\ast} \subset V^{\ast}$ be a Gelfand triple, i.e. $(V, \| \cdot \|_V)$ is a reflexive Banach space which is continuously and densely embedded in a separable Hilbert space $(H, \langle \cdot, \cdot \rangle, \| \cdot \|_H)$. 
The duality between $V^{\ast}$ and $V$ is denoted by ${}_{V^{\ast}}\langle \cdot, \cdot \rangle_V$. 
Let $(L_2(H), \| \cdot \|_2)$ denote the Hilbert space of all Hilbert-Schmidt operators on $H$, with the associated norm.

Let $(W(t))_{t \geq 0}$ be the cylindrical Brownian motion on $H$ w.r.t. a complete filtered space $(\Omega, \mathcal{F}, (\mathcal{F}_t)_{t \geq 0}, P)$.
Consider the following nonlinear equation with additive noise
$$
dX(t) = A(X(t)) dt + B dW(t), \leqno(3.5)
$$
where $A : V \to V^{\ast}$ and $B \in L_2(H)$ satisfy the following conditions:

\vspace{0.2cm}
\noindent
($H_1$) (Hemicontinuity) For all $u, v, x \in V$ the map
$$
\mathbb{R} \ni \lambda \longmapsto {}_{V^{\ast}}\langle A(u +\lambda v), x \rangle_V
$$
is continuous.

\vspace{0.2cm}
\noindent
($H_2$) (Weak monotonicity) There exists $c \in \mathbb{R}$ such that for all $u, v \in V$
$$
2{}_{V^{\ast}}\langle A(u) - A(v), u-v \rangle_V \leq c \| u-v \|^2_H.
$$
\noindent
($H_3$) (Coercivity) There exist $\alpha \in (0, \infty)$, $c_1 \in \mathbb{R}$, $f, c_2 \in (0, \infty)$ such that
$$
2{}_{V^{\ast}}\langle A(v), v \rangle_V + \| B \|^2_2 \leq c_1 \| v \|^2_H - c_2\| v \|^{\alpha +1}_V + f.
$$
\noindent
($H_4$) (Growth) For all $u, v \in V$
$$
| {}_{V^{\ast}}\langle A(v), u \rangle_V | \leq f + c_1(\|v\|^{\alpha}_V + \|u\|^{\alpha + 1}_V + \|u\|^2_H + \|v\|^2_H).
$$

\vspace{0.3cm}

By \cite{KrRo79} (see also \cite{LiRo10}) there exists a strong solution for equation (3.5), i.e. 
there exists a continuous $H$ -valued adapted process $X = X(t)_{t \geq 0}$ s.t.
$$
X(t) = X(0) + \int_0^t A(X_s)ds + B(W(t))
$$
and
$$
E\left(\int_0^t \|X(s)\|^{\alpha + 1}_V + \| X(s) \|^2_H ds\right) < \infty, t > 0
$$
for every $X(0) \in L^2(\Omega, \mathcal{F}_0, P; H)$.

Moreover $(X(t))_{t \geq 0}$ is a time-homogeneous Markov processes on $H$ with transition function 
$P_tf(x) := E(f(X^x(t))), \; f \in p\mathcal{B}(H)$, $x\in H$, where $X^x(t)$ is the solution of (3.5) with $X^x(0) = x$.

Our aim is to investigate the existence of invariant measures for $(P_t)_{t \geq 0}$ defined above, using our two step-approach. 
To do this, let us consider the following assumptions.

\noindent
{\bf Assumption F}. (Strict coercivity w.r.t. $\|\; \|_H$) There exist $\beta, g \in (0, \infty)$ such that
$$
2{}_{V^{\ast}}\langle A(v), v \rangle_V + \|B\|^2_2 \leq -\beta\|v\|^2_H + g
$$
for all $v \in V$.

\noindent
{\bf Assumption G}. There exists $p \geq 1$ such that for all $t > 0$ and for every ball $B_H(0, R)$ 
of radius $R$ there exists a constant $a_t(R) < \infty$ s.t. for all $x, y \in B_H(0, R)$ and $f \in p\mathcal{B}(H)$
$$
(P_tf(y))^p \leq a_t(R) \cdot P_t(f^p)(x).
$$ 

\vspace{0.2cm}

Assumption {\bf G} is a generalization of the famous Wang's Harnack inequality \cite{Wa13}.

It is well known that if ${\rm dim}H < \infty$, then Assumption {\bf F} 
ensures the existence of an invariant probability measure for $(P_t)_{t \geq 0}$; see \cite{PrRo07}, Proposition 4.3.5.
If ${\rm dim}H = \infty$ and the embedding $V \subset H$ is compact, 
then under a strict coercivity condition w.r.t. $\|\; \|_V$, namely
$$
2{}_{V^{\ast}}\langle A(v), v \rangle_V + \|B\|^2_2 \leq -\beta\|v\|^{1+\alpha}_V + g
$$
for all $v \in V$, the existence of an invariant probability measure is still guaranteed, as shown in \cite{Wa13}, Proposition 2.2.3.
Clearly, since $\|\; \|_V$ is stronger than $\|\;\|_H$, the above inequality is more restrictive than Assumption {\bf F}.
As a matter of fact, Assumption {\bf F} is considered because it guarantees that the solution $X$ is bounded in probability, i.e. $\lim\limits_{R\to \infty} \sup\limits_{t \geq 0}P(\|X_t\|_H \geq R) = 0$ (hence the existence of an invariant probability measure if ${\rm dim} H<\infty$). 
As noted in \cite{DaZa96}, in general, the boundness in probability property is not sufficient to ensure the existence of an invariant measure for $(P_t)_{t \geq 0}$ even for deterministic equations, and we refer to \cite{Vr93} for a counterexample.  
However, we can show that Assumption {\bf F} in combination with Assumption {\bf G} does imply the existence of an invariant measure. 
To the best of our knowledge, this result is new in the literature and we present it below (the uniqueness and full support properties were already known, see \cite{Wa13}, Theorem 1.4.1 and Corollary 2.2.4). 

Recall that by Theorem \ref{thm 3.8.0}, the following condition ensures the existence of an invariant probability measure for a Markov kernel $P$ on a measurable space $(E, \mathcal{B})$:

\vspace{0.2cm}
\noindent
{\bf Assumption C'}. i) There exist a finite positive measure $\nu$ on $E$, 
a nonempty set $C \in \mathcal{B}$, a function $\phi:[0, \infty) \rightarrow [0, \infty)$ which is continuous and zero in $0$, and $\delta \in [0,1)$ such that
$$
Pf(x) \leq \phi(\nu(f)) + \delta
$$
for all $f \in \mathcal{B}_1^+$ and $x \in C$.

ii) There exists $n_0 > 0$ such that $\mathop{\inf}\limits_{n \geq n_0} \nu(S_n 1_C)>0$, where $C$ is the set from i) and
$S_n := \dfrac{1}{n} \mathop{\sum}\limits_{k = 0}^{n - 1} P^k$.

\begin{prop} \label{prop 3.12}
Suppose that assumptions {\bf F} and {\bf G} are satisfied. 
Then there exists a unique invariant probability measure for $(P_t)_{t \geq 0}$ and it has full support on $H$.
\end{prop}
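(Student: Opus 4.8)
The plan is to regard $(P_t)_{t\ge 0}$ as the (measurable, Markovian) transition function of the variational solution on $(H,\mathcal B(H))$ and to verify, for the single kernel $P:=P_{t_0}$ with an arbitrarily fixed $t_0>0$, the re-stated Assumption~{\bf C'}; existence of an invariant probability measure then follows from Theorem~\ref{thm 3.8.0} together with Theorem~\ref{thm 2.3.1}, and is transported to the whole semigroup by Proposition~\ref{prop 2.3}, ii). The two hypotheses play complementary roles: Assumption~{\bf F} will supply a uniform $L^2(H)$-bound for the solution, hence a large ball that is charged uniformly in time and which will serve as the test set $C$, while Assumption~{\bf G} is used exactly as smallness is used in Subsection~3.2 — it lets us dominate $P_{t_0}f(y)$ on $C$ by a continuous function of an auxiliary measure applied to $f$.

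\textbf{A priori bound.} I would apply It\^o's formula to $t\mapsto\|X^{0}_t\|_H^2$ for the variational solution (as in \cite{KrRo79}, \cite{LiRo10}, \cite{PrRo07}) and take expectations; Assumption~{\bf F} gives $\frac{d}{dt}\mathbb E\|X^{0}_t\|_H^2\le-\beta\,\mathbb E\|X^{0}_t\|_H^2+g$, so Gronwall's inequality yields $\sup_{t\ge 0}\mathbb E\|X^{0}_t\|_H^2\le g/\beta=:M<\infty$. By Chebyshev's inequality $\sup_{t\ge 0}P(\|X^{0}_t\|_H>R)\le M/R^2$; I then fix $R$ with $M/R^2<\tfrac12$ and set $C:=\{x\in H:\|x\|_H\le R\}$, so that $P_t1_C(0)\ge\tfrac12$ for every $t\ge 0$.

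\textbf{Verification of Assumption~{\bf C'} and conclusion.} Take $\nu:=\delta_0\circ P_{t_0}$ (so $\nu(f)=P_{t_0}f(0)$, a probability measure), $\delta:=0$, and $\phi(s):=a_{t_0}(R)^{1/p}\,s^{1/p}$, which is continuous and vanishes at $0$. For $f\in\mathcal B_1^+$ one has $f^p\le f$ (since $p\ge1$ and $0\le f\le1$), so Assumption~{\bf G} on the ball of radius $R$ gives, for all $y\in C$, $(Pf(y))^p\le a_{t_0}(R)\,P_{t_0}(f^p)(0)\le a_{t_0}(R)\,\nu(f)$, i.e.\ $Pf(y)\le\phi(\nu(f))+\delta$, which is Assumption~{\bf C'}, i). For ii), $\nu(S_n1_C)=P_{t_0}(S_n1_C)(0)=\frac1n\sum_{j=1}^{n}P_{jt_0}1_C(0)\ge\tfrac12$ for all $n\ge1$ by the previous step, so $\inf_{n\ge1}\nu(S_n1_C)>0$. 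Hence $P=P_{t_0}$ satisfies Assumption~{\bf C'} with the measure $\nu$, and Theorem~\ref{thm 3.8.0} (Assumption~{\bf C'} being the case of Assumption~{\bf C} with $\gamma\equiv\delta$ and $m:=\nu$) gives that $\nu\circ R$ is mean almost invariant for $P_{t_0}$, where $R=\sum_{k\ge0}2^{-(k+1)}P^k$ is the resolvent kernel of Remark~\ref{rem 2.8}, ii). Since $\nu\circ R\ge\tfrac12\nu$, one has $(\nu\circ R)\circ P=\sum_{k\ge1}2^{-k}\nu P^k\ll\nu\circ R$, so Theorem~\ref{thm 2.3.1}, (iii)$\Rightarrow$(i), yields a nonzero finite $P_{t_0}$-invariant measure absolutely continuous with respect to $\nu\circ R$; Proposition~\ref{prop 2.3}, ii) turns it into a nonzero finite $(P_t)_{t\ge0}$-invariant measure, which after normalisation is an invariant probability measure. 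Uniqueness and full support on $H$ are not new: Assumption~{\bf G} forces $(P_t)_{t\ge0}$ to be strong Feller and irreducible, so there is exactly one invariant probability measure and it charges every nonempty open set (see \cite{Wa13}, Theorem~1.4.1 and Corollary~2.2.4).

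\textbf{Main obstacle.} The only genuinely analytic point is the a priori estimate — justifying It\^o's formula for $\|X_t\|_H^2$ in the Gelfand-triple framework and reading off the uniform $L^2(H)$-bound from the strict $H$-coercivity — but this is routine. The conceptual content, and what makes the argument sharper than \cite{Li09} and \cite{Wa13}, is that nowhere do we need the sublevel sets of a Lyapunov function to be relatively compact (equivalently, we do not need the embedding $V\subset H$ to be compact): the Harnack inequality~{\bf G} fully replaces the smallness of $C$, while Assumption~{\bf F} alone — that is, boundedness in probability, which by itself is known to be insufficient for existence — becomes enough once combined with the averaged criterion behind (2.3).
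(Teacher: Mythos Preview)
Your proposal is correct and follows essentially the same route as the paper: both arguments use Assumption~{\bf F} to obtain the uniform bound $\sup_{t\ge 0}\mathbb E\|X^0_t\|_H^2\le g/\beta$ (hence a ball $C=B_H(0,R)$ with $\inf_t P_t1_C(0)>0$), take $\nu=\delta_0\circ P_{t_0}$ and $\phi(s)=(a_{t_0}(R)\,s)^{1/p}$, verify Assumption~{\bf C'} for $P_{t_0}$ via the Harnack inequality~{\bf G}, and then invoke Theorem~\ref{thm 3.8.0} and Proposition~\ref{prop 2.3}, ii). You are simply more explicit in a few places the paper leaves implicit (the observation $f^p\le f$ for $f\in\mathcal B_1^+$, the auxiliary-measure property of $\nu\circ R$ from Remark~\ref{rem 2.8}, ii), and the appeal to Theorem~\ref{thm 2.3.1}), and you defer uniqueness and full support to \cite{Wa13} exactly as the paper does.
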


\begin{proof}
First, note that the strict coercivity assumption implies that $E(\|X_t^0\|^2_H) \leq \dfrac{g}{\beta}$ for all $t \geq 0$ (cf. [R\"o Pr], pg. 103, (4.3.12)), hence $P_t(1_{B_H(0, R)})(0)$ $\geq 1 - \dfrac{g}{\beta R^2}$, $t > 0, \; R > 0$. 
Now fix $R$ large enough so that $\mathop{\inf}\limits_{t > 0} P_t(1_{B_H(0, R)})(0) > 0$ and recall that by Assumption {\bf G} $(P_tf(y))^p \leq a_t(R)P_t(f^p)(x)$ for all $x, y \in B_H(0, R),\; f\in p\mathcal{B}(H)$.

If we fix $t>0$ and set $\nu(f) := \delta_0 \circ P_t(f)$, $f \in p\mathcal{B}$, and $\phi(x) = \sqrt[p]{a_t(R)x}$, $x \geq 0$, we obtain that $P_tf(y) \leq \phi(\nu(f))$ for all $y \in B_H(0, R)$, $f \in \mathcal{B}_1^+$, and $\mathop{\inf}\limits_{n} \nu(S_n1_{B_H(0, R)}) > 0$. 
Therefore, Assumption {\bf C'} is satisfied for the Markovian kernel $P_t$, and by Theorem \ref{thm 3.8.0} and Proposition \ref{prop 2.3}, the existence part is proved.

As we already mentioned, the uniqueness and the fact that the invariant measure has full support follow in a similar way as for Corollary 2.2.4 from \cite{Wa13}.

\end{proof}

For the reader's convenience we recall some sufficient conditions under which Assumption {\bf G} is satisfied.

Assume that the operator $B$ is non-degenerate, i.e. if $v \in H$ and $Bv = 0$ then $v = 0$, and denote by $\|\cdot\|_B$ the intrinsic norm induced by $B$ defined as
$$
\|v\|_B = \left\{ \begin{array}{l}
\|y\|_H, \quad {\rm if \; there \; exists \;} y \in H \; {\rm such \; that} \; By = v\\[3mm] 
\infty, \quad {\rm otherwise}.
\end{array}\right.
$$

Consider the following assumptions (cf. \cite{Wa13}, (2.5), or \cite{Li09}, (1.3) and (1.4)):

\vspace{0.2cm}

\noindent
($G_1$) $\alpha \geq 1$ and there exist $\theta \in [2, \infty) \cap (\alpha -1, \infty)$ and $\eta, \gamma \in \mathbb{R}$ with $\eta > 0$ such that for all $u, v \in V$
$$
2{}_{V^{\ast}}\langle A(u) - A(v), u-v \rangle \leq -\eta \|u-v\|_B^{\theta}\|u-v\|^{\alpha + 1 - \theta}_H + \gamma\|u-v\|_H^2.
$$

\noindent
($G_2$) $\alpha \in (0, 1)$ and there exist some measurable function $h : V \to (0, \infty)$, some constant $\theta \geq \dfrac{4}{\alpha + 1}$, some $\gamma \in \mathbb{R}$, and some strictly positive constants $q, \delta, \eta$ such that for all $u, v \in V$
$$
2{}_{V^{\ast}}\langle A(u), u \rangle_V + \|B\|^2_2 \leq q - \delta h(u)^{\alpha + 1} + \gamma\|u\|^2_H,
$$
$$
2{}_{V^{\ast}}\langle A(u) - A(v), u-v \rangle_V \leq -\dfrac{\eta\|u-v\|^{\theta}_B}{\|u-v\|^{\theta - 2}_H(h(u)\vee h(v))^{1 - \alpha}} + \gamma\|u-v\|^2_H.
$$

\vspace{0.2cm}

By \cite{Wa13}, Theorems 2.2.1 and 2.3.1, the following two assertions concerning Harnack inequalities hold:

\vspace{0.2cm}

a). If ($G_1$) holds then for every $p > 1, t > 0, x,y \in H$ and $f \in p\mathcal{B}(H)$
$$
(P_tf(y))^p \leq P_tf^p(x)\exp\left[\frac{p(\frac{\theta + 2}{\theta + 1 - \alpha})^{\frac{2(\theta + 1)}{\theta}}\|x-y\|^{\frac{2(\theta + 1 - \alpha)}{\theta}}_H}{2(p - 1)(\int_0^t \eta^{\frac{2}{\theta + 2}}e^{-\frac{\theta + 1 - \alpha}{\theta + 2}\gamma s}ds)^{\frac{\theta + 2}{\theta}}}\right]. \leqno(3.6)
$$

\vspace{0.2cm}

b). If ($G_2$) holds then there exists a constant $c > 0$ s.t. for all $t > 0, p > 1, x,y \in H$, and $f \in p\mathcal{B}(H)$
$$
\hspace*{-90mm}{}(P_tf(y))^p \leq \leqno(3.7)
$$
$$P_tf^p(x)\exp\left[\frac{cp}{p-1}\left(\frac{\|x-y\|^2_H(1 + \|x\|^2_H + \|y\|^2_H)}{(t \wedge 1)^{\frac{\theta(\alpha + 1) + 4}{\theta(\alpha + 1)}}} + (\frac{p}{p-1})^{\frac{4(1-\alpha)}{\alpha(\theta +2) + \theta - 2}}\frac{\|x-y\|^{\frac{2\theta(\alpha +1)}{\alpha(\theta+2)+\theta-2}}}{(t\wedge 1)^{\frac{\theta(\alpha +1) +4}{\alpha(\theta+2)+\theta-2}}}\right)\right]. 
$$
Concrete examples of operators $A,B$ satisfying conditions ($G_1$) or ($G_2$) have been constructed in \cite{Wa13}, subsection 2.4, for  stochastic generalized porous media, $p$-Laplace, and generalized fast-diffusion equations; see also \cite{Li09}.

\begin{rem} i) The existence part of Proposition \ref{prop 3.12} may be also proved by applying Theorem \ref{thm 3.5} (due to Tweedie), since one can see that for fixed $P_t$, Assumption {\bf B}, i) is satisfied for $v = \|\cdot\|^2_H$ and $C = B_H(0, R)$ with $R$ sufficiently big, while Assumption {\bf B}, ii) follows by the Harnack inequality.

\vspace{0.2cm}

ii) We would like to point out that we considered only the additive noise case just because in this situation it is already known that if ($G_1$) or ($G_2$) hold then inequalities (3.6) resp. (3.7) are satisfied (hence so is Assumption {\bf G}). 
In fact, one can easily see that Proposition \ref{prop 3.12} is true also for the multiplicative case, i.e. $B : V \to L_2(H)$.

\end{rem}

\noindent
{\bf Perturbations of Markov chains satisfying a combined Harnack-Lyapunov condition.}
Let $P$ be a Markov kernel on $(E, \mathcal{B})$ satisfying the following condition:

\vspace{0.2cm}
\noindent
(H-L). There exists a positive measurable function $V$ such that $PV \leq \gamma V + c$ for some positive constants $c$ and $\gamma < 1$.
Moreover, for each $r > 0$ there exist a point $z_0 \in E$, $p > 1$, and a constant $M = M(r, z_0)$ such that $(Pf(x))^p \leq M Pf^p(z_0)$ for all $x \in [V \leq r]$.

\vspace{0.2cm}

Clearly, an example of such a kernel is any $P_t$ associated to the previous SPDE, under Assumptions {\bf F} and $ G_1$ (or $G_2$).

Let now $\rho : E \to (0,1)$ be a measurable function such that $0 < a := \mathop{\inf}\limits_{x \in E} \rho(x)$ and $\mathop{\sup}\limits_{x \in E} \rho(x) = :b < 1$, and $Q$ be a second Markovian kernel on $(E, \mathcal{B})$.

In the sequel we are interested in showing the existence of an invariant probability measure not for $P$ (for which we already now that such a measure exists; cf. Theorem \ref{thm 3.5} or by our generalization Theorem \ref{thm 3.8.0}) but for the following modified Markov kernel
$$
\overline{P}f := \rho Pf + (1-\rho)Qf, \; f \in p\mathcal{B}.
$$

\vspace{0.3cm}

\begin{coro} \label{coro 3.14}
Assume there exist constants $\eta$ and $l < \frac{1-b\gamma}{1-a}$ such that $QV \leq lV + \eta$. 
Then there exists an invariant probability measure for $\overline{P}$.
\end{coro}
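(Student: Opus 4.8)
The plan is to bring the single Markovian kernel $\overline{P}$ under Assumption \textbf{C'} and then conclude via Theorem \ref{thm 3.8.0} together with Theorem \ref{thm 2.3.1}, exactly as recalled just before Proposition \ref{prop 3.12}. Before starting I would record that $\overline{P}V=\rho PV+(1-\rho)QV<\infty$ on $[V<\infty]$, so that $[V<\infty]$ is absorbing for $\overline{P}$; as in the Remark following Theorem \ref{thm 3.5} one may therefore restrict everything to $[V<\infty]$ and assume $V<\infty$ on $E$.

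First I would derive a Lyapunov (drift) inequality for $\overline{P}$. Multiplying $PV\le\gamma V+c$ by $\rho$, $QV\le lV+\eta$ by $1-\rho$, and adding,
$$
\overline{P}V\le\big(\rho\gamma+(1-\rho)l\big)V+\big(\rho c+(1-\rho)\eta\big)\qquad\text{on }E.
$$
Since $t\mapsto t\gamma+(1-t)l$ is affine and $\rho(x)\in[a,b]$, the coefficient of $V$ is at most $\max\{a\gamma+(1-a)l,\ b\gamma+(1-b)l\}$, and the hypothesis $l<\frac{1-b\gamma}{1-a}$ is exactly what forces $a\gamma+(1-a)l<1-(b-a)\gamma\le1$ as well as (using $1-b\le1-a$) $b\gamma+(1-b)l<1$. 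Hence $\overline{P}V\le\bar\gamma V+\bar c$ with $\bar\gamma:=\sup_{x}\big(\rho(x)\gamma+(1-\rho(x))l\big)\in[0,1)$ and $\bar c:=\sup_x\big(\rho(x)c+(1-\rho(x))\eta\big)\le\max\{c,\eta\}<\infty$.

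Next I would fix $r>0$ large enough that $\bar c/(r(1-\bar\gamma))\le\frac14$, set $C:=[V\le r]$, and let $z_0=z_0(r)$, $p=p(r)$, $M=M(r,z_0)$ be those provided by the Harnack part of (H-L). Put $\nu:=\delta_{z_0}\circ P$; then $\nu$ is a probability measure and $V(z_0)<\infty$ since $z_0\in E=[V<\infty]$. For $f\in\mathcal B_1^+$ one has $f^p\le f$, so $Pf(x)\le M^{1/p}\nu(f)^{1/p}$ for $x\in C$, while $Qf\le1$; using $\rho\le b$ and $1-\rho\le1-a$,
$$
\overline{P}f(x)=\rho(x)Pf(x)+(1-\rho(x))Qf(x)\le bM^{1/p}\nu(f)^{1/p}+(1-a),\qquad x\in C,
$$
which is Assumption \textbf{C'}, i) with $\phi(s):=bM^{1/p}s^{1/p}$ (continuous, vanishing at $0$) and $\delta:=1-a\in[0,1)$. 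For \textbf{C'}, ii) I would iterate the drift inequality to $\overline{P}^nV\le\bar\gamma^nV+\bar c/(1-\bar\gamma)$ and argue as in the proof of Proposition \ref{prop 1}, iii): this gives $S_n1_C\ge1-\frac{V}{nr(1-\bar\gamma)}-\frac14$ pointwise, whence $\nu(S_n1_C)\ge\frac34-\frac{PV(z_0)}{nr(1-\bar\gamma)}$, which is $\ge\frac12$ for all $n$ beyond some $n_0$; thus $\inf_{n\ge n_0}\nu(S_n1_C)>0$.

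With Assumption \textbf{C'} verified for $\overline{P}$, Theorem \ref{thm 3.8.0} gives that $\nu\circ R$ — where $R=\sum_{k\ge0}2^{-k-1}\overline{P}^k$, an auxiliary measure for $\overline{P}$ by Remark \ref{rem 2.8}, ii) — is mean almost invariant, and Theorem \ref{thm 2.3.1}, iii)$\,\Rightarrow\,$i), then produces a nonzero finite $\overline{P}$-invariant measure absolutely continuous with respect to $\nu\circ R$, which after normalization is the desired invariant probability measure. The one delicate point is Assumption \textbf{C'}, i): the $Q$-component of $\overline{P}$ carries no Harnack information, so $Qf\le1$ is all one can say and it costs the constant $\delta=1-a>0$. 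This is precisely where the two-step approach is needed — Tweedie's Theorem \ref{thm 3.5} is not applicable since the uniform countable additivity in Assumption \textbf{B}, ii) fails for $\overline{P}$, whereas Theorem \ref{thm 2.3.1} only requires $c(\overline{P},\nu\circ R)<(\nu\circ R)(E)$ and so tolerates the strictly positive slack $1-a$.
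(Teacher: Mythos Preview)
Your proof is correct and follows essentially the same route as the paper: verify Assumption \textbf{C'} for $\overline{P}$ (Harnack estimate plus the trivial bound $Qf\le 1$ for part i), the derived Lyapunov inequality $\overline{P}V\le\bar\gamma V+\bar c$ with $\bar\gamma<1$ for part ii)) and then invoke Theorem \ref{thm 3.8.0} together with Theorem \ref{thm 2.3.1}. The only differences are cosmetic --- the paper takes the reference measure $m=\delta_{z_0}\circ\overline{P}$ (via $Pf(z_0)\le a^{-1}\overline{P}f(z_0)$) rather than your $\nu=\delta_{z_0}\circ P$ and bounds the Lyapunov coefficient more crudely by $b\gamma+(1-a)l$; also, your pointer to Proposition \ref{prop 1}, iii) should really be to relation $(\ast)$ in Proposition \ref{prop 1}, i), though the computation you write out is the correct one.
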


\begin{proof} By hypothesis,
$$
\overline{P}f \leq bPf + 1 - a \leq b\sqrt[p]{MPf(z_0)} + 1 - a \leq b\sqrt[p]{\frac{M}{a}\overline{P}f(z_0)} + 1 - a
$$
for all $f \in \mathcal{B}_1^+$ and $x \in [V \leq r]$.
Hence, if $m := \delta_{z_0} \circ \overline{P}$ and $\phi(t) = b\sqrt[p]{\dfrac{M}{a}t}$, $t \geq 0$, 
we obtain that $\overline{P}$ satisfies Assumption {\bf C'}, i) for $C = [V \leq r]$. 
On the other hand, $V$ is a Lyapunov function for $\overline{P}$ too because
$$
\overline{P}V \leq \rho\gamma V + (1 - \rho)lV + bc + (1-a)\eta \leq (b\gamma +(1-a)l))V + bc + (1-a)\eta,
$$
and $b\gamma +(1-a)l < 1$.
But as in the proof of Proposition \ref{prop 1}, i) (relation ($\ast$)) we obtain that 
$\mathop{\inf}\limits_k \overline{P}^k1_{[V \leq r]}(z_0) > 0$ 
which leads to $\mathop{\inf}\limits_k m(\overline{S}_k 1_{[V \leq r]}) > 0$ for sufficiently large $r$. 
Consequently, $\overline{P}$ satisfies Assumption {\bf C'} and the result follows by Theorem \ref{thm 3.8.0}.
\end{proof}

In the particular case when $Q(x,\cdot)= \delta_x$, $x \in E$, let us denote $\overline{P}$ by $P^{\rho}$, i.e.
$$
P^{\rho}f(x) = \rho(x)Pf(x) + (1 -\rho(x))\delta_x(f)
$$
for all $f \in p\mathcal{B}$.

By Corollary \ref{coro 3.14} we get:

\begin{coro}
The Markov kernel $P^{\rho}$ admits an invariant probability measure.
\end{coro}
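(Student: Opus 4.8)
The plan is to view $P^{\rho}$ as the instance $Q(x,\cdot)=\delta_x$ of the kernel $\overline P$ from Corollary \ref{coro 3.14} and to re-run that corollary's argument, which here even simplifies. Fix $r>0$ large, let $z_0$, $p>1$, $M=M(r,z_0)<\infty$ be supplied by (H-L) (we may assume $V<\infty$, cf. the remark after Theorem \ref{thm 3.5}, so $V(z_0)<\infty$), set $C:=[V\le r]$ and $m:=\delta_{z_0}\circ P^{\rho}$ (a probability on $E$), and check that $P^{\rho}$ satisfies Assumption {\bf C'} with these data; the conclusion then follows from Theorem \ref{thm 3.8.0} and Theorem \ref{thm 2.3.1}.

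For Assumption {\bf C'}, i): since $a\le\rho\le b<1$, one has $a\,Pf\le P^{\rho}f$ and $P^{\rho}f\le bPf+(1-a)$ for every $f\in\mathcal{B}_1^+$. Hence, for $x\in C$, combining the Harnack inequality of (H-L), the elementary bound $f^p\le f$ (valid since $0\le f\le1$, $p>1$), and $Pf^p(z_0)\le a^{-1}P^{\rho}f^p(z_0)\le a^{-1}m(f)$,
$$
P^{\rho}f(x)\le bPf(x)+(1-a)\le b\bigl(M\,Pf^p(z_0)\bigr)^{1/p}+(1-a)\le\phi\bigl(m(f)\bigr)+\delta,
$$
with $\phi(t):=b(M/a)^{1/p}t^{1/p}$ (continuous, $\phi(0)=0$) and $\delta:=1-a\in[0,1)$. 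This is Assumption {\bf C'}, i).

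For Assumption {\bf C'}, ii): because the identity kernel leaves $V$ unchanged, the drift estimate for $P^{\rho}$ is better than the one used for a general $Q$ in Corollary \ref{coro 3.14},
$$
P^{\rho}V=\rho\,PV+(1-\rho)V\le\rho(\gamma V+c)+(1-\rho)V=\bigl(1-\rho(1-\gamma)\bigr)V+\rho c\le\gamma' V+bc,
$$
with $\gamma':=1-a(1-\gamma)\in(0,1)$ (using $a>0$ and $\gamma<1$). Iterating as in relation $(\ast)$ in the proof of Proposition \ref{prop 1}, i), we get $(P^{\rho})^nV\le(\gamma')^nV+bc/(1-\gamma')$, so $(P^{\rho})^n1_{E\setminus C}(z_0)\le r^{-1}\bigl(V(z_0)+bc/(1-\gamma')\bigr)\le\tfrac12$ for all $n\ge0$ once $r$ is large enough; therefore $(P^{\rho})^n1_C(z_0)\ge\tfrac12$ for all $n$, whence $\inf_n m(S_n1_C)\ge\tfrac12>0$ with $S_n:=\tfrac1n\sum_{k=0}^{n-1}(P^{\rho})^k$. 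This is Assumption {\bf C'}, ii).

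Thus Assumption {\bf C'}, hence Assumption {\bf C}, holds for $P^{\rho}$; by Theorem \ref{thm 3.8.0}, $m\circ R$ is mean almost invariant for $P^{\rho}$ (with $R=\sum_{k\ge0}2^{-k-1}(P^{\rho})^k$), and Theorem \ref{thm 2.3.1} then yields a nonzero finite $P^{\rho}$-invariant measure absolutely continuous with respect to $m\circ R$; normalizing it gives the invariant probability measure. I do not expect a genuine obstacle — the statement is really a corollary. The one point to be careful about is that one cannot quote Corollary \ref{coro 3.14} verbatim with $QV=V$, since for unbounded $V$ this forces the drift exponent $l=1$, which can violate the condition $l<(1-b\gamma)/(1-a)$ when $a\le b\gamma$; but, as shown above, the identity kernel produces the contraction constant $\gamma'=1-a(1-\gamma)<1$ with no side condition, so the proof of Corollary \ref{coro 3.14} carries over directly.
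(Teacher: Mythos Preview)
Your proof is correct and follows the same route as the paper, which simply says ``By Corollary \ref{coro 3.14} we get'' and verifies Assumption {\bf C'} for $P^\rho$ exactly as in that corollary's proof. Your added observation is in fact a useful sharpening: taking $Q(x,\cdot)=\delta_x$ gives $QV=V$, hence $l=1$, and the literal hypothesis $l<(1-b\gamma)/(1-a)$ of Corollary \ref{coro 3.14} would force $a>b\gamma$, which need not hold; you bypass this by computing $P^\rho V\le(1-\rho(1-\gamma))V+\rho c\le(1-a(1-\gamma))V+bc$ directly, yielding the contraction constant $\gamma'=1-a(1-\gamma)<1$ with no extra restriction. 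This is precisely the refinement needed to make the paper's one-line justification watertight, and otherwise your verification of Assumption {\bf C'}, i) and ii) mirrors the proof of Corollary \ref{coro 3.14} line for line.
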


For the reader's convenience, we recall the assumptions involved in the results of Harris and Tweedie, 
respectively, which ensure the existence of an invariant probability measure for $P$, 
as we already discussed in Subsection 3.2 (see Theorems \ref {thm 3.4} and \ref{thm 3.5}): 

\vspace{0.2cm}
\noindent
{\bf Assumption A.} 
There exist a function $\widetilde{V} \in p\mathcal{B}$ and constants $\widetilde{b}$ and $\widetilde{\gamma} \in (0, 1)$ such that
$$
P\widetilde{V} \leq \widetilde{\gamma} \widetilde{V} + \widetilde{b} \;\;\; {\rm on} \; E.
$$  

\noindent
Furthermore, the sub-level set $[\widetilde{V} \leq r]$ is small for some $r > 2\widetilde{b} \slash (1-\widetilde{\gamma})$, i.e.

$$
\inf\limits_{x \in [\widetilde{V} \leq r]}P(x, \cdot) \geq \nu(\cdot)
$$
for some non-zero sub-probability $\nu$.

\vspace{0.2cm}
\noindent
{\bf Assumption B.} i) There exist a measurable function $\widetilde{V}:E \rightarrow [0, \infty)$, a constant $\widetilde{b}$, and a set $C \in \mathcal{B}$ such that
$$
P\widetilde{V} \leq \widetilde{V} -1 + \widetilde{b}1_C \;\;\; {\rm on} \; E.
$$

ii) The set $C$ from i) is such that the following uniform countable additivity condition holds: 
for all $(A_n)_n \subset \mathcal{B}$ decreasing to $\emptyset$ we have that
$$
\lim\limits_n\sup\limits_{x\in C}P(A_n)(x)=0.
$$

Our next purpose is to show that $P^\rho$ does not satisfy Assumption {\bf B}. 
Regarding the aplicability of Harris' result we do not have a similar negative answer. 
However, we can show that if $P$ does not satisfy Assumption {\bf A} then neither does $P^{\rho}$.
More precisely, we have:

\begin{prop} \label{prop 3.16}
The following assertions hold for the kernel $P^{\rho}$:

\vspace{0.2cm}

i) If a non-empty set $C \in \mathcal{B}$ satisfies Assumption {\bf B}, ii) then $C$ 
consists of a finite number of points. In particular, if $P(x, \{y\}) = 0$ for all $x, y \in E$ then $P^{\rho}$ does not satisfy Assumption {\bf B}.

\vspace{0.2cm}

ii) If $P$ does not satisfy Assumption {\bf A} and $P(x, \{y\}) = 0$ for all $x, y \in E$ then $P^{\rho}$ 
does not satisfy Assumption {\bf A} provided $a > \frac{1}{2}$.

\end{prop}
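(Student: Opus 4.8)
The plan is to prove both assertions by contradiction, using throughout the explicit form $P^{\rho}(x,\cdot)=\rho(x)P(x,\cdot)+(1-\rho(x))\delta_x$. When $P(x,\{y\})=0$ for all $x,y$, this is precisely the decomposition of $P^{\rho}(x,\cdot)$ into a diffuse part $\rho(x)P(x,\cdot)$ and one atom, sitting at $x$ with mass $1-\rho(x)\le 1-a<\tfrac12$; this dichotomy is what drives everything. \textbf{For i)}, suppose $C\in\mathcal B$ is non-empty and satisfies the uniform countable additivity condition of Assumption~{\bf B}, ii) for $P^{\rho}$. From $P^{\rho}(x,A)\ge(1-\rho(x))1_A(x)\ge(1-b)1_A(x)$ one sees that if $C$ were infinite, picking distinct $x_1,x_2,\dots\in C$ and putting $A_n:=\{x_k:k\ge n\}$ gives $A_n\downarrow\emptyset$ while $\sup_{x\in C}P^{\rho}(1_{A_n})(x)\ge 1-b>0$ for every $n$, a contradiction; hence $C$ is finite. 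For the ``in particular'' part, note that $E$ must be uncountable (on a countable space every probability measure is purely atomic, incompatible with $P(x,\{y\})=0$ for all $y$). If $P^{\rho}$ satisfied Assumption~{\bf B} with drift function $\widetilde V\ge 0$, constant $\widetilde b$ and set $C$, then $C$ is finite, so $P(x,C)=0$ and hence $P^{\rho}(x,C)=(1-\rho(x))1_C(x)=0$ for every $x\notin C$; an immediate induction yields $(P^{\rho})^{j}1_C(x)=0$ for all $j\ge 0$ and $x\notin C$, so iterating $P^{\rho}\widetilde V\le\widetilde V-1+\widetilde b\,1_C$ at some $x\in E\setminus C$ (non-empty, $E$ being uncountable and $C$ finite) gives $0\le(P^{\rho})^{n}\widetilde V(x)\le\widetilde V(x)-n$ for all $n$, which is absurd.

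\textbf{For ii)}, I would assume $P^{\rho}$ satisfies Assumption~{\bf A}, witnessed by $\widetilde V\ge 0$, $\widetilde b\ge 0$, $\widetilde\gamma\in(0,1)$, a number $r>2\widetilde b/(1-\widetilde\gamma)$, and a non-zero sub-probability $\nu$ with $\inf_{x\in C}P^{\rho}(x,\cdot)\ge\nu$, where $C:=[\widetilde V\le r]$, and then deduce that $P$ itself satisfies Assumption~{\bf A}, contradicting the hypothesis. Two ingredients transfer with little effort. First, $\widetilde V$ is a Lyapunov function for $P$: since $\rho P\widetilde V=P^{\rho}\widetilde V-(1-\rho)\widetilde V\le(\widetilde\gamma-1+\rho)\widetilde V+\widetilde b$ and $\widetilde\gamma-1+\rho\le\rho\widetilde\gamma$ (because $\rho\le 1$), using $\widetilde V\ge 0$, $\widetilde b\ge 0$, $\rho\ge a$ one gets $P\widetilde V\le\widetilde\gamma\widetilde V+\widetilde b/a$, and in fact, since $\rho\le b<1$, the sharper rate $P\widetilde V\le\bigl(1-\tfrac{1-\widetilde\gamma}{b}\bigr)\widetilde V+\widetilde b/a$ (or simply $P\widetilde V\le\widetilde b/a$ when that coefficient is $\le 0$). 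Second, $C$ is small for $P$: if $C$ has at least two points then $\nu$ is diffuse, because for any $y$ we may choose $x\in C$ with $x\ne y$ and get $\nu(\{y\})\le P^{\rho}(x,\{y\})=\rho(x)P(x,\{y\})=0$; consequently, for $x\in C$ and $A\in\mathcal B$, $P(x,A)=P(x,A\setminus\{x\})\ge\nu(A\setminus\{x\})/\rho(x)\ge\nu(A)$, so $\inf_{x\in C}P(x,\cdot)\ge\nu$, while a one-point $C$ is trivially small.

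The remaining step --- producing a small sublevel set $[\widetilde V\le r']$ of $P$ with $r'>2(\widetilde b/a)/(1-\widetilde\gamma)$, so that Assumption~{\bf A} holds for $P$ --- is the one genuine obstacle and the only place where the hypothesis $a>\tfrac12$ is really needed: extracting a geometric-drift pair for $P$ from the one for $P^{\rho}$ inflates the additive constant by the factor $1/a\in(1,2)$, and one has to check that the strict threshold $r>2b/(1-\gamma)$ demanded by Assumption~{\bf A} survives. I would handle this by first recentering $\widetilde V$ to $\widetilde V-\inf\widetilde V$ (this only enlarges the relative margin $r(1-\widetilde\gamma)/2\widetilde b$ and leaves the smallness of $C$ untouched) and then combining $1/a<2$ with the improved contraction rate $1-\tfrac{1-\widetilde\gamma}{b}<\widetilde\gamma$ available since $b<1$; the bookkeeping is elementary but must be done with care, and matching the paper's precise constant ``$2$'' is the subtle point. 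Morally, $P^{\rho}$ is just a ``lazy'' version of $P$ in which a genuine $P$-step is taken at each time with probability $\ge a>\tfrac12$, so geometric ergodicity of $P^{\rho}$ ought to force it for the embedded $P$-chain; turning this intuition into the stated quantitative conclusion is exactly the hard part.
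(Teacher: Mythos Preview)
Your treatment of i) is correct and essentially follows the paper. For the ``in particular'' part you take a cleaner route than the paper does: the paper computes $(P^{\rho})^n(x,\{y\})=(1-\rho(y))^n 1_{\{y\}}(x)\le(1-a)^n\to 0$, deduces $\overline S_n(x,C)\to 0$ for every finite $C$, and then invokes Proposition~\ref{prop 1}\,iii) (Assumption~{\bf B},\,i) forces $\inf_n m(S_n 1_C)>0$) to obtain the contradiction. Your direct iteration of the drift at a point $x\in E\setminus C$ avoids that detour.

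For ii) your two transfer steps (Lyapunov inequality for $P$, smallness of $C$ for $P$) match the paper's exactly, and your case-split on whether $|C|\ge 2$ is tidier than what the paper does. The divergence is in \emph{where} $a>\tfrac12$ enters. In the paper it is used \emph{only} to prove that $C=[\widetilde V\le r]$ is uncountable: from $P\widetilde V\le\widetilde\gamma\widetilde V+\widetilde b/a$ one gets $P^n(x,[\widetilde V>r])\le\widetilde\gamma^n\widetilde V(x)/r+\tfrac{1}{2a}$, and $a>\tfrac12$ makes the last term $<1$, so $P^n(x,C)>0$ for large $n$; since $P^n(x,\cdot)$ is diffuse this forces $C$ uncountable, hence with at least two points, hence $\nu$ diffuse. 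Your case-split sidesteps this entirely --- a one-point $C$ is trivially small --- so in your argument $a>\tfrac12$ has no role there.

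As for the threshold $r>2(\widetilde b/a)/(1-\widetilde\gamma)$ that you flag as the ``genuine obstacle'': the paper does \emph{not} verify it. After establishing $P\widetilde V\le\widetilde\gamma\widetilde V+\widetilde b/a$ and that $[\widetilde V\le r]$ is small for $P$, the paper simply declares this ``clearly completes the proof''. So the gap you worry about is glossed over in the paper too. Your proposed fix via the sharper rate $\gamma'=1-\tfrac{1-\widetilde\gamma}{b}$ does not close it either: the resulting threshold is $\tfrac{2(\widetilde b/a)}{1-\gamma'}=\tfrac{2\widetilde b\,b}{a(1-\widetilde\gamma)}$, which is no smaller than $\tfrac{2\widetilde b}{1-\widetilde\gamma}$ since $b\ge a$. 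In short, your diagnosis of the difficulty is correct, but neither your sketch nor the paper's proof actually resolves it; in the paper's argument $a>\tfrac12$ serves the uncountability step, not the threshold.
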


\begin{proof}

i). Assume that the set $C$ is not finite, so we can find a sequence 
$(A_n)_n \subset \mathcal{B}$, $\emptyset \neq A_n \subset C$, decreasing to $\emptyset$. 
Then $\mathop{\sup}\limits_{x \in C} P^{\rho}(x, A_n) \geq 
\mathop{\sup}\limits_{x \in C} (1 -\rho(x))\delta_x(A_n) \geq (1 -b)\mathop{\sup}\limits_{x \in C}\delta_x(A_n) = 1-b$ 
for all $n \geq 1$, hence Assumption {\bf B}, ii) is not verified, which is a contradiction.

Suppose now that $P(x, \{y\}) = 0$ for all $x,y \in E$. We claim that $(P^{\rho})^n(x, C) \mathop{\to}\limits_{n}0$ 
for any $C = \{ x_1, \ldots, x_n \} \subset E$ and $x \in E$. 
Clearly, it is enough to show this for $C = \{y\}$ for some arbitrarily fixed $y \in E$. 
But 
$$
P^{\rho}(x, \{y\}) = \rho(x)P(x, \{y\}) + (1-\rho(x))\delta_x(\{y\}) = (1-\rho(y))1_{\{y\}}(x) \; {\rm for \; all} \; x \in E.
$$
$$
(P^{\rho})^{2}(x, \{y\}) = (1-\rho(x))(1-\rho(y))P(x,\{y\}) + (1-\rho(x))(1-\rho(y))1_{\{y\}}(x) = 
$$
$$
= (1-\rho(y))^21_{\{y\}}(x) \; {\rm for \; all} \; x \in E.
$$
Inductively, one gets
$$
(P^{\rho})^{n}(x,\{y\}) = (1-\rho(y))^n1_{\{y\}}(x) \leq (1-a)^n \mathop{\to}\limits_{n}0.
$$
Now, if $P^{\rho}$ satisfies Assumption {\bf B} for some set $C \in \mathcal{B}$, 
then by the above considerations $C$ must be finite and $(P^{\rho})^{n}(x, C)\mathop{\to}\limits_{n}0$ for all $x \in E$. 
But this implies that $\overline{S}_n(x, C) = 
\dfrac{1}{n}\mathop{\sum}\limits_{k=0}^{n-1}(P^{\rho})^{k}(x, C)\mathop{\to}\limits_{n}0$ 
for all $x \in E$, which contradicts Proposition \ref{prop 1}, iii).

\vspace{0.2cm}

ii). Assume that $P^{\rho}$ satisfies Assumption {\bf A}, so that there exists 
$\widetilde{V} \in p\mathcal{B}$ s.t. $P^{\rho}\widetilde{V} \leq \widetilde{\gamma} \widetilde{V} + \widetilde{b}$ 
for some positive constants $b$ and 
$\gamma < 1$, and $r > \dfrac{2\widetilde{b}}{1-\widetilde{\gamma}}$ s.t. 
$\mathop{\inf}\limits_{x \in [\widetilde{V} \leq r]}P^{\rho}(x, \cdot) \geq \nu(\cdot)$ for some non-zero sub-probability $\nu$.
Then
$$
P\widetilde{V}(x) = \frac{1}{\rho(x)}P^{\rho}\widetilde{V}(x) - \frac{1-\rho(x)}{\rho(x)}\widetilde{V}(x)
\leq \frac{\widetilde{\gamma} - 1 + \rho(x)}{\rho(x)} \widetilde{V}(x) + \frac{\widetilde{b}}{\rho(x)}
\leq \widetilde{\gamma} \widetilde{V}(x) + \frac{\widetilde{b}}{a},
$$
for all $ x \in E$, therefore $\widetilde{V}$ is a Lyapunov function for $P$.

The next step is to prove that $[\widetilde{V} \leq r]$ is small for $P$, which clearly completes the proof, 
since it would contradict the hypothesis that $P$ does not satisfy Assumption {\bf A}. 
To this end, let us notice first that $[\widetilde{V} \leq r]$ is uncountable, in particular $[\widetilde{V} \leq r]$ contains at least two points. 
Indeed, as in the proof of Proposition \ref{prop 1}, i), we have that $P^n([\widetilde{V} > r]) \leq 
\dfrac{\widetilde{\gamma}^n\widetilde{V}}{r} + \dfrac{1}{2a}$ for all $n \geq 1$, 
so that $P^n(x, [\widetilde{V} \leq r]) \geq \varepsilon$ for all $n$ 
large enough, some arbitrarily fixed $x$, and some $\varepsilon = \varepsilon(x) > 0$. 
If $[\widetilde{V} \leq r]$ is countable, then $P^n(x, [\widetilde{V} \leq r]) = 0$ 
(since $P(x, \cdot)$ does not charge the points) which is a contradiction.

Let now $y \in E$ arbitrarily chosen and let $x \in [\widetilde{V} \leq r], x \neq y$.
Then $\nu(\{y\}) \leq P^{\rho}(x, \{y\}) = \rho(x)P(x, \{y\}) + (1-\rho(x))\delta_x\{y\} = 0$, hence $\nu$ does not charge the points as well. 
Then, for $A \in \mathcal{B}$
$$
P(x, A) = P(x, A\setminus\{x\}) = \dfrac{1}{\rho(x)}P^{\rho}(x, A\setminus\{x\}) - \dfrac{1-\rho(x)}{\rho(x)}\delta_x(A\setminus \{x\}) =
$$
$$
= \dfrac{1}{\rho(x)}P^{\rho}(x, A\setminus\{x\}) \geq \dfrac{1}{b}\nu(A\setminus\{x\}) = \dfrac{1}{b}\nu(A) \; {\rm for \; all} \; x \in [\widetilde{V} \leq r].
$$
Consequently, $[\widetilde{V} \leq r]$ is small for $P$.

\end{proof}

\begin{rem}
i) It is worth to mention that with small changes in the proof, 
Proposition \ref{prop 3.16}, i) remains true for more general $\overline{P}$ when $Q$ is not necessarily the identity kernel. 
However, we assume that $Q$ inherits the following property: 
there exists $\varepsilon>0$ such that $Q(x, \{x\}) > \varepsilon$ and $Q(x, \{y\})=0$ for all $x\neq y \in E$.  

ii) As already mentioned in Remark \ref{rem 3.8}, Assumption {\bf A}
is often formulated such that $[\widetilde{V} \leq r]$ is small for all sufficiently large $r>0$. 
In this situation, one can easily see that with essentially the same proof, 
no lower bound for the constant $a$ is needed in order for Proposition \ref{prop 3.16}, ii) to be true. 
\end{rem}

\subsection{Uniform boundness on $L^1$ implies uniform integrability for the adjoint} 

From now on we assume that $(P_t)_{t \geq 0}$ is a strongly continuous semigroup of Markovian operators on $L^p(m)$ for some $p \geq 1$. 
We consider the associated resolvent $(R_{\alpha})_{\alpha > \alpha_0 \geq 0}$
$$
R_{\alpha}(f) = \int_{0}^{\infty}e^{-\alpha t}P_tf dt, \; f \in L^p(m).
$$

\begin{rem} \label{rem 3.2}
i) In general, $R_{\alpha}$ is not defined on $L^p(m)$ for all $\alpha > 0$, unless $(0, \infty)$ is included in the resolvent set of the generator associated with $(P_t)_{t \geq 0}$.
However, $R_\alpha$ can be defined on $L^p(m)\cap L^\infty(m)$, for all $\alpha > 0$.

ii) If $(P_t)_{t \geq 0}$ is uniformly bounded then $(0, \infty)$ 
is included in the resolvent set of the generator and $(\alpha R_\alpha)_{\alpha > 0}$ 
is uniformly bounded, but the converse is not necessarily true in general.
\end{rem}

\begin{thm} \label{prop 3.3}
Assume that $R_{\alpha}$ is defined for all $\alpha > 0$ and that the family $(\alpha R_{\alpha})_{\alpha > 0}$ is uniformly bounded on $L^p(m)$. Then $m$ is resolvent almost invariant, hence there exists a nonzero positive finite invariant measure $\nu = \rho \cdot m$. 
Moreover, if $p > 1$ then $\rho$ can be chosen from $L^q_{+}(m)$.
\end{thm}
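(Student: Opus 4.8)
The plan is to exploit the resolvent characterisation behind Theorem \ref{thm 2.3}: I will show that the uniform bound $M:=\sup_{\alpha>0}\|\alpha R_\alpha\|_{L^p(m)\to L^p(m)}<\infty$ forces $m$ to be \emph{resolvent almost invariant} with $\delta=0$, and then I will extract the invariant density $\rho$ by the same biting/Koml\'os argument as in the proof of Theorem \ref{thm 2.3}, v)$\Rightarrow$i), carried out for the \emph{adjoint resolvent} instead of the adjoint semigroup; the same uniform bound will, when $p>1$, keep $\rho$ in $L^q(m)$.

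First I would move to adjoints. Since $m$ is finite we have $1\in L^{q}(m)$, $\frac1p+\frac1q=1$, so $g_\alpha:=(\alpha R_\alpha)^\ast 1$ is a well-defined, nonnegative element of $L^{q}(m)$ for every $\alpha>0$ (nonnegative because $\alpha R_\alpha$ is positivity preserving on $L^p(m)$), and for $A\in\mathcal B$ the defining property of the adjoint gives
$$
m(\alpha R_\alpha 1_A)=\int_E 1_A\, g_\alpha\, dm=\int_A g_\alpha\, dm .
$$
By hypothesis $\|g_\alpha\|_{q}\le\|(\alpha R_\alpha)^\ast\|_{L^q\to L^q}\,\|1\|_q=\|\alpha R_\alpha\|_{L^p\to L^p}\,m(E)^{1/q}\le M\,m(E)^{1/q}$, hence by H\"older's inequality $m(\alpha R_\alpha 1_A)\le M\,m(E)^{1/q}m(A)^{1/p}$ for all $\alpha>0$ and $A\in\mathcal B$ (for $p=1$ this reads $m(\alpha R_\alpha 1_A)\le M\,m(A)$). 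Thus $\phi(A):=M\,m(E)^{1/q}m(A)^{1/p}$ is absolutely continuous with respect to $m$ and dominates $m(\alpha R_\alpha 1_A)$ for every $\alpha$, i.e.\ $m$ is resolvent almost invariant with $\delta=0$; in particular $c((R_\beta)_\beta,m,(\alpha_n)_n)=0<m(E)$ for every $(\alpha_n)_n\searrow 0$. The resolvent version of the main result (Theorem \ref{thm 2.5}, whose proof under $(A_1)$ runs along the same lines as that of Theorem \ref{thm 2.3}, once one uses Lemma \ref{lem 2.1} to realise the adjoint resolvent on $L^1(m)$) then produces a nonzero positive finite invariant measure $\nu=\rho\cdot m$, $0\le\rho\in L^1(m)$; concretely, fixing $\alpha_n\searrow 0$, one obtains $\rho$ as an a.e.\ limit of Ces\`aro averages of a subsequence of $(g_{\alpha_n})_n$ (via Chacon's biting lemma and Koml\'os' lemma), the resolvent identity yields $\beta R_\beta^\ast\rho\le\rho$ for all $\beta>0$, the Markov property $\beta R_\beta 1=1$ upgrades this to equality, and nontriviality of $\nu$ comes from $m(g_{\alpha_n})=m(E)$ together with the uniform absolute continuity just proved.

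For the last assertion, when $p>1$ we have $1<q<\infty$ and $\sup_n\|g_{\alpha_n}\|_{q}\le M\,m(E)^{1/q}$, so the convex combinations of the $g_{\alpha_n}$ used to build $\rho$ are uniformly bounded in $L^q(m)$; passing to the a.e.\ convergent subsequence and applying Fatou's lemma gives $\|\rho\|_q\le M\,m(E)^{1/q}<\infty$, i.e.\ $\rho\in L^q_+(m)$. The step I expect to require the most care is not any single estimate but transcribing the semigroup construction of Theorem \ref{thm 2.3} into its resolvent form under $(A_1)$ — in particular verifying, via Lemma \ref{lem 2.1}, that $(\beta R_\beta^\ast)_\beta$ is well behaved enough on $L^1(m)$ for the $\liminf$/biting argument proving $\beta R_\beta^\ast\rho\le\rho$ to be valid.
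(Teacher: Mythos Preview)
Your proof is correct and follows essentially the same route as the paper: the key estimate $m(\alpha R_\alpha 1_A)\le M\,m(E)^{(p-1)/p}m(A)^{1/p}$ (which you obtain via the adjoint and H\"older, and the paper obtains by applying H\"older directly to $\int 1\cdot(\alpha R_\alpha 1_A)\,dm$) shows $m$ is resolvent almost invariant with $\delta=0$, and then one invokes the resolvent version of the main theorem. For the $L^q$ claim the paper is slightly more direct, using reflexivity of $L^q$ to pick a weak accumulation point of $(\alpha R_\alpha^\ast 1)_{\alpha>0}$ rather than running Koml\'os plus Fatou, but your argument is equally valid.
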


\begin{proof}
The first part follows since for all $A \in \mathcal{B}$
$$
m (\alpha_n R_{\alpha_n}1_A) \leq m(E)^{\frac{p - 1}{p}}\cdot M \cdot m(A)^{\frac{1}{p}}
$$

If $p>1$, since $(\alpha R_\alpha^\ast 1)_{\alpha > 0}$ is bounded in $L^q(m)$ 
and arguing as in the proof of Theorem \ref{thm 2.3}, v)$\Rightarrow$ i), 
one can see that any accumulation point $\rho$ of $(\alpha R_\alpha^\ast 1)_{\alpha > 0}$ in $L^q(m)$ 
leads to a non-zero finite invariant measure $\rho \cdot m$.
\end{proof}

\vspace{0.2cm}

Extending \cite{Hi00}, a positivity preserving operator $P$ on $L^p(m)$, $p \geq 1$ is said to satisfy {\it condition (I)}
if there exists $\phi \in L^p_+(m)$ such that 
$$
\mathop{\lim}\limits_{r \to \infty} \mathop{\sup}\limits_{f \in L^p(m), \|f\|_p \leq 1} \|(|Pf|- r\phi)^+\|_p < 1.
$$ 

Closely related to condition (I), the following $L^p$-{\it tail} norm was considered in \cite{Wa05} 
in order to measure the non(semi)compactness of a bounded operator 
$P$ on $L^p(m)$, for a fixed $\phi \in L^p_+(m)$, $p \geq 1$:
$$
\| P \|_{p, T}^{\phi} = \mathop{\lim}\limits_{r \to \infty} \mathop{\sup}\limits_{\| f \|_p \leq 1} \| (Pf)1_{\{ |Pf| > r\phi \}} \|_p.
$$

\begin{rem} \label{rem 3.4}
i) If a positivity preserving operator $P$ on $L^p(m)$ satisfies $\| P \|_{p, T}^{\phi} < 1$ 
then it satisfies condition (I), since $\|(|Pf|-r\phi)^+\|_p = \|(|Pf|-r\phi)^+ 1_{[|Pf|>r\phi]}\|_p \leq \|(Pf) 1_{[|Pf|>r\phi]}\|_p$.

ii) Hino used condition (I) (for $\phi = 1$) in order to show the existence of a nonzero element $\rho$ in $\ker P^{\ast}$ 
(hence a nonzero invariant measure for $P$). 
More precisely, he showed that if $P$ is a Markovian operator on a separable $L^p(m)$ space with $p>1$ 
such that $P^n$ satisfies condition (I) for some $n > 0$ and $\phi = 1$, then there exists a nonzero element $\rho \in \ker (I - P^{\ast})$. 
Then he applied this result for $P=P_{t_0}$, for some $t_0 > 0$, 
and $P=\alpha R_{\alpha}$, for some $\alpha > 0$; see [Hi 00], Theorems 2.8 and 2.9.
\end{rem}

We recapture Hino's results in the following more general statement, 
as a particular case of Theorem \ref{prop 3.3}. 
Our main improvements consists of allowing $p=1$ and remaining in the case of an arbitrary measurable space $(E, \mathcal{B})$. 

\begin{coro} \label{coro 3.5}
i) If there exist $t_0 > 0$ and $\phi \in L^p(m)$ such that $P_{t_0} \phi \leq \phi$ and $P_{t_0}$ 
satisfy  condition (I) then $(P_t)_{t \geq 0}$ is uniformly bounded.

ii) If there exist $n \geq 1$, $\alpha > \alpha_0$, and $\phi \in L^p(m)$ such that $(\alpha R_{\alpha})^n \phi \leq \phi$ and $(\alpha R_{\alpha})^n$ satisfies condition (I), then $R_{\beta}$ is defined for all $\beta > 0$ and $(\beta R_{\beta})_{\beta > 0}$ is uniformly bounded. 

In particular, if the assumptions in i) or ii) are satisfied then the conclusion of Theorem \ref{prop 3.3} holds.
\end{coro}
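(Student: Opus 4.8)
The plan is to derive both parts from Theorem~\ref{prop 3.3}: in each case I verify its hypotheses, namely that the relevant resolvent family is defined on all of $(0,\infty)$ and uniformly bounded on $L^p(m)$, and then quote Theorem~\ref{prop 3.3} (together with Remark~\ref{rem 3.2}, ii) in part i)) for the existence of $\nu=\rho\cdot m$ and the improved integrability $\rho\in L^q_+(m)$ when $p>1$. Everything rests on one elementary observation, which I would establish first: if $P$ is a positivity preserving operator on $L^p(m)$, $\phi\in L^p_+(m)$ satisfies $P\phi\le\phi$, and condition~(I) holds for $P$ with this $\phi$, then $P$ is power bounded on $L^p(m)$. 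To see this, choose $r>0$ with $\theta:=\sup_{\|f\|_p\le1}\|(|Pf|-r\phi)^+\|_p<1$; by homogeneity each $f$ admits the splitting $|Pf|\le g_1+r\|f\|_p\,\phi$ with $g_1:=(|Pf|-r\|f\|_p\phi)^+\ge0$ and $\|g_1\|_p\le\theta\|f\|_p$. Applying $P$ repeatedly and using $|P(\,\cdot\,)|\le P|\,\cdot\,|$, positivity, and $P\phi\le\phi$, an easy induction gives $|P^nf|\le g_n+c_n\phi$ with $\|g_n\|_p\le\theta^n\|f\|_p$ and $c_n=r\|f\|_p(1+\theta+\dots+\theta^{\,n-1})\le r\|f\|_p/(1-\theta)$, whence $\sup_{n\ge1}\|P^n\|_{L^p(m)\to L^p(m)}\le 1+r\|\phi\|_p/(1-\theta)<\infty$.

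For part i) I would apply this with $P:=P_{t_0}$, obtaining $\sup_n\|P_{t_0}^{\,n}\|<\infty$. For arbitrary $t\ge0$ write $t=nt_0+s$, $n\in\mathbb N$, $s\in[0,t_0)$; then $P_t=P_sP_{t_0}^{\,n}$, and since $(P_t)_{t\ge0}$ is a $C_0$-semigroup the orbits $s\mapsto P_sf$ are continuous, hence bounded, on the compact interval $[0,t_0]$, so $\sup_{s\in[0,t_0]}\|P_s\|<\infty$ by the Banach--Steinhaus theorem. Therefore $\sup_{t\ge0}\|P_t\|\le\bigl(\sup_{s\le t_0}\|P_s\|\bigr)\bigl(\sup_n\|P_{t_0}^{\,n}\|\bigr)<\infty$, i.e.\ $(P_t)_{t\ge0}$ is uniformly bounded. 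By Remark~\ref{rem 3.2}, ii) the resolvent is then defined on all of $(0,\infty)$ and $(\alpha R_\alpha)_{\alpha>0}$ is uniformly bounded, so Theorem~\ref{prop 3.3} applies and gives the conclusion.

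For part ii) I would apply the observation with $P:=(\alpha R_\alpha)^n$, which is positivity preserving on $L^p(m)$ because $R_\alpha$ is (as $\alpha>\alpha_0$), and with the given $\phi$; this yields $\sup_k\|(\alpha R_\alpha)^{nk}\|<\infty$, and absorbing the finitely many intermediate powers $(\alpha R_\alpha)^j$, $0\le j<n$ (each bounded, since $R_\alpha$ is), we obtain $M:=\sup_{k\ge1}\|(\alpha R_\alpha)^k\|<\infty$. From $\|R_\alpha^{\,k+1}\|\le M\alpha^{-(k+1)}$ the Taylor expansion $R_\beta=\sum_{k\ge0}(\alpha-\beta)^k R_\alpha^{\,k+1}$ of the resolvent about $\alpha$ converges in operator norm whenever $|\alpha-\beta|<\alpha$; hence $R_\beta$ is a bounded operator on $L^p(m)$ for every $\beta\in(0,2\alpha)$ with $\|\beta R_\beta\|\le M$ for $\beta\le\alpha$, and since $R_\beta$ is moreover already defined for $\beta>\alpha_0$ with $\alpha_0<\alpha$, the operator $R_\beta$ is defined for all $\beta>0$. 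The uniform bound $\sup_{\beta>0}\|\beta R_\beta\|<\infty$ then follows by combining $\|\beta R_\beta\|\le M$ on $(0,\alpha]$, the norm-continuity of $\beta\mapsto\beta R_\beta$ on the resolvent set over a bounded middle range of $\beta$, and the classical estimate $\|\beta R_\beta\|\le\beta M_0/(\beta-\omega)$ for $\beta$ large, coming from the Laplace representation of $R_\beta$ and an a priori exponential growth bound $\|P_t\|\le M_0e^{\omega t}$. Theorem~\ref{prop 3.3} then gives the conclusion, which is also the content of the final ``in particular''.

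I expect the main work to be the induction in the first paragraph: one must propagate the decomposition of $|P^nf|$ into an $L^p$-small part plus a multiple of $\phi$ through all powers of $P$, and it is precisely the sub-invariance $P\phi\le\phi$ that keeps the $\phi$-coefficients bounded — a single application of condition~(I) by itself being useless. The only other mildly delicate point, in part ii), is gluing the local Taylor description of $R_\beta$ near $\alpha$ (which reaches only across $(0,2\alpha)$) to the global Laplace description for large $\beta$, so as to cover all of $(0,\infty)$ with a uniform bound; this is exactly where the hypothesis $\alpha>\alpha_0$ enters, ensuring the two ranges overlap.
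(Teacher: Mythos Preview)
Your proof is correct and follows essentially the same route as the paper's. The paper delegates the power-boundedness step (your first paragraph) to an adaptation of Proposition~2.5 in \cite{Hi00}, while you spell out the induction explicitly; and the paper uses the identical Neumann/Taylor series $\widetilde R_\beta=\sum_{k\ge0}(\alpha-\beta)^k R_\alpha^{\,k+1}$ for part~ii), though it dismisses the uniform bound for $\beta>\alpha$ as ``straightforward'' where you argue via norm-continuity on a compact middle interval together with the Hille--Yosida bound $\|\beta R_\beta\|\le M_0\beta/(\beta-\omega)$ for large $\beta$.
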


\begin{proof} 
By a simple adaptation of the proof for Proposition 2.5 in \cite{Hi00}, one can show that under i) it follows that $(P_t)_{t > 0}$ is uniformly bounded, and respectively, in the case of ii), that $\{ (\alpha R_{\alpha})^k \}_{k \geq 1}$ is uniformly bounded. 
So, let $M< \infty$ be a positive real number such that $\| (\alpha R_{\alpha})^k \|_{L^p(m)} \leq M$ for all $k > 0$. 
For $0 < \beta < \alpha$, define
$$
\widetilde{R}_{\beta} := \mathop{\sum}\limits_{k = 0}^{\infty} (\alpha - \beta)R_{\alpha}^{k + 1}.
$$

Then 
$$
\| \widetilde{R}_{\beta} \| \leq \mathop{\sum}\limits_{k = 0}^{\infty} (\alpha - \beta)^n \| R_{\alpha}^{k + 1} \| \leq \dfrac{M}{\alpha} \mathop{\sum}\limits_{k = 0}^{\infty} \left(\dfrac{\alpha - \beta}{\alpha}\right)^k = \dfrac{M}{\beta} 
$$
and it is straightforward to check that $(R_{\beta})_{\beta \geq \alpha_0}$ extends to a Markovian resolvent $(R_{\beta})_{\beta > 0}$ by setting $R_{\beta} = \widetilde{R}_{\beta}$ for all $0 < \beta < \alpha_0$, such that $(\beta R_{\beta})_{\beta > 0}$ is uniformly bounded.
\end{proof}

{\bf Applications to sectorial forms.} Let $(\mathcal{E}, D(\mathcal{E}))$ 
be a coercive closed form on $L^2(m)$ in the sense of \cite{MaRo92}. 
That is, $D(\mathcal{E})$ is a dense linear subspace of $L^2(m)$ and 
$\mathcal{E} : D(\mathcal{E}) \times D(\mathcal{E}) \to \mathbb{R}$ is bilinear, 
nonnegative definite such that $D(\mathcal{E})$ is complete with respect to the norm 
$\| \cdot \|_{\mathcal{E}_1} := \mathcal{E}_1(\cdot, \cdot)^{\frac{1}{2}}$, 
where, for $\alpha \in \mathbb{R}_+$, $\mathcal{E}_{\alpha}(f, g) 
:= \mathcal{E}(f, g) + \alpha \langle f, g \rangle_{L^2(m)}$ for all $f, g \in D(\mathcal{E})$.

Also, the "weak sector condition" is assumed, i.e. 
there exists $k \in \mathbb{R}_+$ such that $|\mathcal{E}(f, g)| \leq k\mathcal{E}_1(f, f)^{\frac{1}{2}}\mathcal{E}_1(g, g)^{\frac{1}{2}}$ 
for all $f, g \in D(\mathcal{E})$.

Then (cf. \cite{MaRo92}, Chapter 1, Sections 1 and 2), one can associate a strongly continuous semigroup of contractions $(T_t)_{t \geq 0}$ on $L^2(m)$ whose generator $(L, D(L))$ satisfies $D(L) \subset D(\mathcal{E})$ 
densely and $\mathcal{E}(f, g) = (-Lf, g)$ for all $f \in D(L)$ and $g \in D(\mathcal{E})$.

A bilinear form $(\mathcal{E}, D(\mathcal{E}))$ on $L^2(m)$ 
is called a sectorial form if there exists $\alpha \in [0, \infty)$ 
such that $(\mathcal{E}_{\alpha}, D(\mathcal{E}_{\alpha}) := D(\mathcal{E}))$ is a coercive closed form. 
If $(T_t^{(\alpha)})_{t > 0}$ is the semigroup corresponding to $\mathcal{E}_{\alpha}$, 
then $T_t := e^{\alpha t}T_t^{(\alpha)}$, $t > 0$ is called the semigroup on $L^2(m)$ associated with $(\mathcal{E}, D(\mathcal{E}))$. 
We say that $(\mathcal{E}, D(\mathcal{E}))$ is positivity preserving if $(T_t)_{t > 0}$ is positivity preserving.

Since the semigroup generated by a coercive closed form is of contractions, the following result is an immediate consequence of Theorem \ref{prop 3.3}.

\begin{coro} \label{coro 3.6}
Assume that $(T_t)_{t > 0}$ is a Markovian semigroup associated to a coercive closed form $(\mathcal{E}, D(\mathcal{E}))$. 
Then there exists $0 \neq \rho \in L^2_+(m)$ such that $\rho \cdot m$ is $T_t$-invariant.
\end{coro}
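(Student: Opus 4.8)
The plan is to verify the hypotheses of Theorem \ref{prop 3.3} with $p=2$ and then read off the conclusion. Recall first that, by \cite{MaRo92}, Chapter I, the semigroup $(T_t)_{t\ge 0}$ associated with a coercive closed form $(\mathcal{E},D(\mathcal{E}))$ on $L^2(m)$ is a strongly continuous semigroup of contractions; since here it is moreover assumed to be Markovian, $(T_t)_{t\ge 0}$ is a strongly continuous semigroup of Markovian operators on $L^2(m)$, i.e. assumption (A$_1$) holds with $p=2$.

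Next I would check that the resolvent $R_\alpha=\int_0^\infty e^{-\alpha t}T_t\,dt$ is well defined on all of $L^2(m)$ for every $\alpha>0$. This is immediate: the generator $(L,D(L))$ of a strongly continuous contraction semigroup is dissipative, so $(0,\infty)$ lies in its resolvent set and $R_\alpha=(\alpha-L)^{-1}$. Moreover, for every $f\in L^2(m)$,
$$
\|\alpha R_\alpha f\|_2\le \alpha\int_0^\infty e^{-\alpha t}\|T_tf\|_2\,dt\le \alpha\int_0^\infty e^{-\alpha t}\,dt\,\|f\|_2=\|f\|_2 ,
$$
so that $(\alpha R_\alpha)_{\alpha>0}$ is not merely uniformly bounded but contractive on $L^2(m)$.

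With these two observations, Theorem \ref{prop 3.3} applies with $p=2$ (hence $q=2$): it yields that $m$ is resolvent almost invariant and, consequently, that there is a nonzero positive finite measure $\nu=\rho\cdot m$ which is $(T_t)_{t\ge 0}$-invariant, with the density $\rho$ belonging to $L^q_+(m)=L^2_+(m)$ since $p=2>1$. This is precisely the assertion. I do not expect any genuine obstacle here: the only input beyond the quoted results is the standard fact that the semigroup attached to a coercive closed form is contractive, all the substantive work having been carried out already in Theorem \ref{prop 3.3} and, ultimately, in Theorem \ref{thm 2.3} via Chacon's biting lemma and the Koml\'os lemma.
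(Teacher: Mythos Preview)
Your argument is correct and matches the paper's own proof: the paper simply remarks that the semigroup attached to a coercive closed form is contractive and invokes Theorem \ref{prop 3.3}, which is exactly what you do, only spelled out in greater detail. There is no discrepancy in approach.
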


As in [Wa 05], let us consider the following inequality for $(\mathcal{E}, D(\mathcal{E}))$: 
there exist  $r_0 \geq 0$, $\beta : (r_0, \infty) \to (0, \infty)$, and a strictly positive $\phi \in L^2(m)$ such that
$$
m(f^2) \leq r\mathcal{E}(f,f) + \beta(r)m(\phi|f|)^2, \; r > r_0, \; f \in D(\mathcal{E}). \leqno(3.2)
$$

Recall that $(\mathcal{E}, D(\mathcal{E}))$ is said to satisfy the {\it super Poincar\'e inequality} if (3.2) is satisfied for $r_0 > 0$.

Also, let $F : (0, \infty) \to \mathbb{R}$ be an increasing and continuous function such that 
$\mathop{\sup}\limits_{r \in (0, 1]} |rF(r)| < \infty$ and $\mathop{\lim}\limits_{r \to \infty} F(r) = +\infty$. 
We say that $(\mathcal{E}, D(\mathcal{E}))$ satisfy the {\it $F$-Sobolev inequality} if there exist  two constants $c_1 > 0, c_2 \geq 0$ such that
$$
m(f^2F(f^2)) \leq c_1\mathcal{E}(f, f) + c_2, \; f \in D(\mathcal{E}), \; m(f^2)=1. \leqno(3.3)
$$

If $F = \log$, then (3.3) is called ({\it defective} when $c_2 \neq 0$) {\it $\log$-Sobolev inequality}.

\begin{rem} \label{rem 3.7}
By \cite{Wa05}, Theorem 3.3.1, if $(\mathcal{E}, D(\mathcal{E}))$ is a positive bilinear form on $L^2(m)$, then $F$-Sobolev inequality implies (3.2) for $\phi \equiv 1$ and $\beta(r) = c_1 F^{-1}(c_2(1 + r^{-1}))$ for some $c_1, c_2 > 0$, where $F^{-1}(r) = \inf\{ s \geq 0 : F(s) \geq r \}$.
\end{rem}

\begin{lem} \label{thm 3.8}
Let $(\mathcal{E}, D(\mathcal{E}))$ be a positivity preserving coercive closed form which satisfies (3.2). 
Assume there exists $\alpha < \dfrac{1}{r_0}$ such that $(e^{\alpha t}T_t)_{t > 0}$ is Markovian and there exists $t_0 > 0$ with $e^{\alpha t_0}T_{t_0}\phi \leq \phi$. 
Then there exists $\rho \in L^2(m)$ such that $\rho \cdot m$ is $(e^{\alpha t}T_t)_{t \geq 0}$-invariant.
\end{lem}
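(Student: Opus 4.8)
The plan is to reduce the statement to Corollary~\ref{coro 3.5}(i) together with Theorem~\ref{prop 3.3}, applied to the Markovian semigroup $P_t:=e^{\alpha t}T_t$ on $L^2(m)$. Since $(T_t)_{t\ge0}$ is strongly continuous (being the semigroup attached to a coercive closed form) and $(e^{\alpha t}T_t)_{t>0}$ is assumed Markovian, $(P_t)_{t\ge0}$ is a strongly continuous Markovian semigroup on $L^2(m)$, and it is positivity preserving because both $e^{\alpha t}$ and $T_t$ are. From the hypothesis $P_{t_0}\phi=e^{\alpha t_0}T_{t_0}\phi\le\phi$ and positivity preservation one gets $P_{nt_0}\phi\le\phi$ for every $n\ge1$. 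Hence it suffices to show that $P_{nt_0}$ satisfies condition (I) (with weight $\phi$) for some $n$: then Corollary~\ref{coro 3.5}(i) (applied with $nt_0$ in place of $t_0$) gives that $(P_t)_{t\ge0}$ is uniformly bounded on $L^2(m)$; by Remark~\ref{rem 3.2}(ii) the resolvent $R_\beta$ is then defined for all $\beta>0$ with $(\beta R_\beta)_{\beta>0}$ uniformly bounded; and Theorem~\ref{prop 3.3} (in the case $p=q=2$) produces a nonzero $\rho\in L^2_+(m)$ with $\rho\cdot m$ invariant for $(P_t)_{t\ge0}=(e^{\alpha t}T_t)_{t\ge0}$, which is the assertion.

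So the real content is to verify condition (I) for $P_t$ when $t$ is large, and by Remark~\ref{rem 3.4}(i) it is enough to prove $\|P_t\|_{2,T}^{\phi}<1$ for such $t$. Here I would use that the $L^2$-tail norm rescales, $\|e^{\alpha t}T_t\|_{2,T}^{\phi}=e^{\alpha t}\|T_t\|_{2,T}^{\phi}$, so the claim becomes $\|T_t\|_{2,T}^{\phi}<e^{-\alpha t}$ for $t$ large. The functional inequality (3.2) is exactly what controls the tail norm of $(T_t)_{t\ge0}$: by Wang's semigroup estimates (see \cite{Wa05}), (3.2) forces $\|T_t\|_{2,T}^{\phi}$ to decay, with $\limsup_{t\to\infty}\frac{1}{t}\log\|T_t\|_{2,T}^{\phi}\le-\frac{1}{r_0}$ (convention $1/r_0=+\infty$ when $r_0=0$). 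Alternatively one can run the energy estimate along $s\mapsto\|P_sf\|_2^2$: from $\frac12\frac{d}{ds}\|P_sf\|_2^2=\alpha\|P_sf\|_2^2-\mathcal{E}(P_sf,P_sf)$, applying (3.2) to $P_sf\in D(\mathcal{E})$ with a fixed $r\in(r_0,1/\alpha)$ (which exists precisely because $\alpha<1/r_0$) yields a Gronwall-type inequality with exponential rate $\lambda:=\frac{1}{r}-\alpha>0$ once the rank-one $\phi$-term is handled. In either case $\limsup_{t\to\infty}\frac{1}{t}\log\bigl(e^{\alpha t}\|T_t\|_{2,T}^{\phi}\bigr)\le\alpha-\frac{1}{r_0}<0$, so $\|P_t\|_{2,T}^{\phi}\to0$, and in particular $\|P_{nt_0}\|_{2,T}^{\phi}<1$ for all $n$ large, as required.

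The main obstacle is this middle step: converting (3.2) into the quantitative decay $\|T_t\|_{2,T}^{\phi}\lesssim e^{-t/r}$ (for each $r>r_0$, up to a constant depending on $\beta(r)$) in the general, possibly non-symmetric, sectorial framework, and in particular dealing with the rank-one term $m(\phi|P_sf|)^2$ in the energy estimate — simply absorbing it into $\|P_sf\|_2^2$ only succeeds under a smallness assumption, so one must iterate carefully as in Wang's argument, which is where the full strength of the $\phi$-weighted inequality (3.2), rather than a bare Poincaré inequality, is genuinely used. Everything else is bookkeeping within the chain described above, and the hypothesis $\alpha<1/r_0$ enters solely to make the decay rate of $\|T_t\|_{2,T}^{\phi}$ strictly larger than $\alpha$, so that the rescaled tail norm of $(e^{\alpha t}T_t)_{t>0}$ still tends to $0$.
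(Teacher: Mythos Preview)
Your approach is essentially the same as the paper's: reduce to Corollary~\ref{coro 3.5}(i) via Remark~\ref{rem 3.4}(i) by estimating the $\phi$-weighted tail norm of $P_t=e^{\alpha t}T_t$ through the rescaling $\|e^{\alpha t}T_t\|_{2,T}^{\phi}=e^{\alpha t}\|T_t\|_{2,T}^{\phi}$ and Wang's result. The only difference is that the paper invokes \cite{Wa05}, Theorem~3.2.2, which gives the sharp pointwise bound $\|T_t\|_{2,T}^{\phi}\le e^{-t/r_0}$ for \emph{every} $t>0$; hence $\|P_t\|_{2,T}^{\phi}\le e^{(\alpha-1/r_0)t}<1$ already at $t=t_0$, and there is no need to pass to $nt_0$, to argue asymptotically, or to run the Gronwall alternative you sketch --- the ``main obstacle'' you flag is exactly the content of that theorem, not something to be reproved here.
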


\begin{proof}
By \cite{Wa05}, Theorem 3.2.2, we have that $\| T_t \|_{2, T}^{\phi} \leq e^{-\frac{t}{r_0}}$ for all $t > 0$. 
Since $\| e^{\alpha t}T_t \|_{2, T}^{\phi} = e^{\alpha t}\|  T_t \|_{2, T}^{\phi} \leq e^{(\alpha - \frac{1}{r_0})t} < 1$, 
the result follows by Remark \ref{rem 3.4}, i) and Corollary 3.5, i).
\end{proof}

\begin{coro} \label{coro 3.9}
Let $(\mathcal{E}, D(\mathcal{E}))$ be a positivity preserving sectorial form such that 
$(\mathcal{E}_{\alpha}, D(\mathcal{E}))$ satisfies the $F$-Sobolev inequality for one (and hence for all) $\alpha \geq 0$.
 If $(T_t)_{t \geq 0}$ is Markovian then there exists $\rho \in L^2(m)$ such that $\rho \cdot m$ is $(T_t)_{t \geq 0}$-invariant.
\end{coro}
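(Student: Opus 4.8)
The plan is to deduce this from Lemma~\ref{thm 3.8} by choosing the shift parameter so that the auxiliary semigroup coincides with the given Markovian one. First I would invoke the definition of a sectorial form to fix $\alpha_0 \geq 0$ large enough that $(\mathcal{E}_{\alpha_0}, D(\mathcal{E}))$ is a coercive closed form, and by the ``hence for all'' clause of the hypothesis this same $\alpha_0$ may be taken so that $(\mathcal{E}_{\alpha_0}, D(\mathcal{E}))$ satisfies the $F$-Sobolev inequality. Since a coercive closed form is nonnegative definite, $\mathcal{E}_{\alpha_0}$ is in particular a positive bilinear form, so Remark~\ref{rem 3.7} applies and produces inequality $(3.2)$ for $(\mathcal{E}_{\alpha_0}, D(\mathcal{E}))$ with $\phi \equiv 1$; the function $\beta$ it provides is finite on all of $(0,\infty)$, so $(3.2)$ holds for every $r>0$ and one is free to take the threshold $r_0$ there arbitrarily small, in particular with $\alpha_0 < 1/r_0$.

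Next I would match up the semigroups. Let $(T_t^{(\alpha_0)})_{t\geq 0}$ be the contraction semigroup on $L^2(m)$ associated with the coercive closed form $(\mathcal{E}_{\alpha_0}, D(\mathcal{E}))$, so that by definition $T_t = e^{\alpha_0 t} T_t^{(\alpha_0)}$ for the semigroup $(T_t)_{t\geq 0}$ attached to $(\mathcal{E}, D(\mathcal{E}))$. Because $(T_t)_{t\geq 0}$ is Markovian, hence positivity preserving, $(T_t^{(\alpha_0)})_{t\geq 0}$ is positivity preserving, i.e. $(\mathcal{E}_{\alpha_0}, D(\mathcal{E}))$ is a positivity preserving coercive closed form satisfying $(3.2)$. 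Now I would apply Lemma~\ref{thm 3.8} to $(\mathcal{E}_{\alpha_0}, D(\mathcal{E}))$, with $\phi \equiv 1$ and with shift parameter equal to $\alpha_0$. This is legitimate: $\alpha_0 < 1/r_0$ was arranged above; $(e^{\alpha_0 t} T_t^{(\alpha_0)})_{t>0} = (T_t)_{t>0}$ is Markovian by hypothesis; and for any $t_0 > 0$ one has $e^{\alpha_0 t_0} T_{t_0}^{(\alpha_0)} 1 = T_{t_0} 1 = 1 \leq 1$, again by Markovianity. Lemma~\ref{thm 3.8} then yields $\rho \in L^2(m)$ with $\rho \cdot m$ invariant for $(e^{\alpha_0 t} T_t^{(\alpha_0)})_{t\geq 0} = (T_t)_{t\geq 0}$, which is the assertion.

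I do not anticipate a real obstacle, since the genuine analytic content --- that condition (I) forces uniform boundedness and hence an invariant measure via Theorem~\ref{prop 3.3} --- is already encapsulated in Corollary~\ref{coro 3.5} and Lemma~\ref{thm 3.8}, so this corollary is essentially a matter of organizing the reductions. The one point deserving care is the juggling of the various shifts: one has to verify that the $F$-Sobolev inequality for $\mathcal{E}_{\alpha_0}$ really does yield $(3.2)$ with a threshold $r_0$ small enough to kill the constraint $\alpha_0 < 1/r_0$ in Lemma~\ref{thm 3.8}, and that taking the shift in Lemma~\ref{thm 3.8} to be exactly $\alpha_0$ converts the auxiliary semigroup $e^{\alpha_0 t} T_t^{(\alpha_0)}$ back into the original Markovian $(T_t)_{t\geq 0}$, so that the required domination $e^{\alpha_0 t_0} T_{t_0}^{(\alpha_0)} 1 \leq 1$ is automatic rather than an additional hypothesis to be checked.
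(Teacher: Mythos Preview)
Your proof is correct and follows essentially the same approach as the paper: apply Remark~\ref{rem 3.7} to pass from the $F$-Sobolev inequality to inequality~(3.2) with $\phi\equiv 1$ and $r_0$ arbitrarily small, and then invoke Lemma~\ref{thm 3.8}. The paper records this in a single line; you have merely made explicit the choice of shift $\alpha_0$ so that $(\mathcal{E}_{\alpha_0},D(\mathcal{E}))$ is coercive and the identification $e^{\alpha_0 t}T_t^{(\alpha_0)}=T_t$, which is exactly the bookkeeping needed to verify the hypotheses of Lemma~\ref{thm 3.8}.
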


\begin{proof} 
It follows by Remark \ref{rem 3.7} and Lemma \ref{thm 3.8}.
\end{proof}

\vspace{0.2cm}

\noindent
{\bf Example } (small perturbation of Dirichlet forms; cf. \cite{BoRoZh00}). 
Following \cite{BoRoZh00}, let $X$ be a locally convex topological real vector space with dual 
$X^{\ast}$, $\mathcal{B} = \mathcal{B}(X)$ its Borel $\sigma$-algebra, 
and $H$ a separable Hilbert space which is continuously embedded in $X$.

For $f \in \mathcal{F}C_b^{\infty} := 
\{ \varphi(l_1, \ldots, l_m) | m \in \mathbb{N}, l_i \in X^{\ast}, \varphi \in C_b^{\infty}(\mathbb{R}^m) \}$, $x \in X$, 
define $\nabla_H f(x)$ the element in $H$ uniquely defined by
$$
\langle \nabla_H f(x), h \rangle = \dfrac{d}{ds} f(x + sh)|_{s = 0}.
$$

Let $\mu$ be a probability measure on $(X, \mathcal{B})$ such that $\mathcal{E}_{\mu}(f, g) := \int_{X} \langle \nabla_Hf, \nabla_Hg \rangle d\mu$, $f, g \in \widetilde{\mathcal{F}C_b^{\infty}}^{\mu}$ is well defined and closable on $L^2(\mu)$, where $\widetilde{\mathcal{F}C_b^{\infty}}^{\mu}$ denotes the set of all $\mu$-classes determined by $\mathcal{F}C_b^{\infty}$.

If $\mathcal{L}_s(H)$ stands for the set of all symmetric nonnegative definite bounded linear operators on $H$, 
then for any strongly measurable map $A : X \to \mathcal{L}_s(H)$ such that
$$
C^{-1}I_H \leq A \; \mu - {\rm a.e. \; for \; some \;} c > 0 {\; \rm and} \; \int_{X}\| A(x) \|_{\mathcal{L}(H)} \mu(dx) < \infty
$$
the form
$$
\mathcal{E}_{\mu, A}(f,g) := \int_{X}\langle A(x)\nabla_Hf(x), \nabla_Hg(x) \rangle_H \mu(dx), \; f,g \in \widetilde{\mathcal{F}C_b^{\infty}}^{\mu}
$$
is well defined and closable on $L^2(\mu)$, and its closure, denoted by $(\mathcal{E}, D(\mathcal{E}))$ 
for some fixed $\mu$ and $A$, is a symmetric coercive closed form.

Let $v : X \to H$ be $\mathcal{B}(X) / \mathcal{B}(H)$-measurable with $\| v \|_H \in L^2(\mu)$ 
such that there exist $\varepsilon \in (0, 1)$, $a, b \in \mathbb{R}_+$ such that for all $f,g \in \mathcal{F}C_b^{\infty}$ we have
$$
| \int \langle v, \nabla_Hf \rangle_H g d\mu | \leq b\mathcal{E}_1(f,f)^{\frac{1}{2}}\mathcal{E}_1(g,g)^{\frac{1}{2}}
$$
and
$$
\int \langle v, \nabla_Hf \rangle_H g d\mu \geq -\varepsilon \mathcal{E}(f,f) - a\|f\|^2_{L^2(\mu)}.
$$

Let $\mathcal{E}_v(f,g) := \mathcal{E}(f,g) + \int \langle v, \nabla_Hf \rangle_H g d\mu$, $f,g \in \mathcal{F}C_b^{\infty}$. 
Then $(\mathcal{E}_v, \widetilde{\mathcal{F}C_b^{\infty}})$ 
is closable and if $(\mathcal{E}_v, D(\mathcal{E}_v))$ denotes its closure, 
then $D(\mathcal{E}_v) = D(\mathcal{E})$, and for 
$\alpha := a + 1 - \varepsilon$, $(\mathcal{E}_{v, \alpha}, D(\mathcal{E}))$ is a coercive closed form (hence $\mathcal{E}_v$ is sectorial), and
$$
(1 - \varepsilon)\mathcal{E}_1(f,f) \leq \mathcal{E}_{v, \alpha}(f,f) \leq \max\{ 1 + b_1, 1 + b + a - \varepsilon \}\mathcal{E}_1(f,f). \leqno{(3.4)}
$$ 
Moreover, if we denote by $(P_t^v)_{t \geq 0}$ the semigroup associated to $(\mathcal{E}_v, D(\mathcal{E}_v))$, then $(P^v_t)_{t \geq 0}$ is Markovian.

\vspace{0.2cm}

The following result covers and extends Theorem 3.6 from \cite{BoRoZh00}; see also \cite{Hi98} and \cite{Hi00}.

\begin{coro} \label{thm 3.10} i) Assume that $(\mathcal{E}, D(\mathcal{E}))$ 
satisfies inequality (3.2) for some strictly positive $\phi \in L^2(m)$ such that there exists $t_0 > 0$ with $P^v_{t_0}\phi \leq \phi$ and for some $r_0 < \dfrac{1 - \varepsilon}{a}$. 
Then there exists $\rho \in L^2(m)$ such that $\rho \cdot m$ is $(P^v_t)_{t \geq 0}$-invariant.

ii) If $(\mathcal{E}, D(\mathcal{E}))$ satisfies the $F$-Sobolev inequality such that $F^{-1} < \infty$ 
then the assumptions in i) are fulfilled for $\phi \equiv 1$.
\end{coro}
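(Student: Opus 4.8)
The plan is to deduce both parts from Lemma~\ref{thm 3.8}, applied to the coercive closed form $(\mathcal{E}_{v,\alpha_0},D(\mathcal{E}))$ with $\alpha_0:=a+1-\varepsilon$, whose associated semigroup is $T_t:=e^{-\alpha_0 t}P^v_t$ (so that, in the notation of Lemma~\ref{thm 3.8}, $e^{\alpha_0 t}T_t=P^v_t$). Since $(P^v_t)_{t\geq 0}$ is Markovian, $(T_t)_{t\geq 0}$ is positivity preserving, hence $(\mathcal{E}_{v,\alpha_0},D(\mathcal{E}))$ is a positivity preserving coercive closed form; and the hypotheses of i) supply precisely the two remaining input conditions of Lemma~\ref{thm 3.8}, namely that $e^{\alpha_0 t}T_t=P^v_t$ is Markovian and that $e^{\alpha_0 t_0}T_{t_0}\phi=P^v_{t_0}\phi\leq\phi$. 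So the only point left in i) is to check that $(\mathcal{E}_{v,\alpha_0},D(\mathcal{E}))$ satisfies the super Poincar\'e inequality (3.2) with a parameter $r_0'$ for which $\alpha_0<1/r_0'$.

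To get this, I would feed the lower bound in (3.4) into (3.2) for $(\mathcal{E},D(\mathcal{E}))$. From $(1-\varepsilon)\mathcal{E}_1(f,f)\leq\mathcal{E}_{v,\alpha_0}(f,f)$ and $\mathcal{E}_1(f,f)=\mathcal{E}(f,f)+m(f^2)$ one obtains $\mathcal{E}(f,f)\leq\frac{1}{1-\varepsilon}\mathcal{E}_{v,\alpha_0}(f,f)-m(f^2)$; substituting this into $m(f^2)\leq r\mathcal{E}(f,f)+\beta(r)m(\phi|f|)^2$ and solving for $m(f^2)$ gives, for every $r>r_0$,
$$
m(f^2)\leq\frac{r}{(1+r)(1-\varepsilon)}\,\mathcal{E}_{v,\alpha_0}(f,f)+\frac{\beta(r)}{1+r}\,m(\phi|f|)^2 .
$$
As $r\downarrow r_0$ the coefficient $\frac{r}{(1+r)(1-\varepsilon)}$ decreases to $r_0':=\frac{r_0}{(1+r_0)(1-\varepsilon)}$, and (since $\mathcal{E}_{v,\alpha_0}(f,f)\geq 0$) the inequality for a given value of the coefficient implies the same inequality for every larger value; hence $(\mathcal{E}_{v,\alpha_0},D(\mathcal{E}))$ satisfies (3.2) with parameter $r_0'$. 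A short computation (using $\alpha_0=a+1-\varepsilon$) shows that $\alpha_0<1/r_0'$ is equivalent to $r_0\,a<1-\varepsilon$, i.e.\ to the hypothesis $r_0<\frac{1-\varepsilon}{a}$. Thus all hypotheses of Lemma~\ref{thm 3.8} hold with $\alpha=\alpha_0$, and it yields $\rho\in L^2(m)$ with $\rho\cdot m$ invariant for $(P^v_t)_{t\geq 0}$.

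For ii) it suffices to see that its hypotheses are the special case of i) with $\phi\equiv 1$. By Remark~\ref{rem 3.7}, the $F$-Sobolev inequality for $(\mathcal{E},D(\mathcal{E}))$ implies (3.2) with $\phi\equiv 1$ and $\beta(r)=c_1F^{-1}(c_2(1+r^{-1}))$; the assumption $F^{-1}<\infty$ makes $\beta(r)$ finite for every $r>0$, so (3.2) holds with $r_0=0$, whence $r_0<\frac{1-\varepsilon}{a}$ trivially. Moreover $\phi\equiv 1$ is strictly positive and, $(P^v_t)_{t\geq 0}$ being Markovian, $P^v_{t_0}\phi=P^v_{t_0}1=1=\phi$ for any $t_0>0$; so i) applies and gives the claim.

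Given Lemma~\ref{thm 3.8}, the argument is short, and the one point that must be handled with care is the passage to the super Poincar\'e inequality for $(\mathcal{E}_{v,\alpha_0},D(\mathcal{E}))$ with a sharp enough parameter: one has to retain the $-m(f^2)$ term coming from the lower bound in (3.4), since replacing it by the cruder estimate $\mathcal{E}(f,f)\leq\frac{1}{1-\varepsilon}\mathcal{E}_{v,\alpha_0}(f,f)$ would only yield the weaker threshold $r_0<\frac{1-\varepsilon}{a+1-\varepsilon}$ instead of the stated $r_0<\frac{1-\varepsilon}{a}$.
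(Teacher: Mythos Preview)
Your proof is correct and follows essentially the same route as the paper: you apply Lemma~\ref{thm 3.8} to the coercive closed form $(\mathcal{E}_{v,\alpha_0},D(\mathcal{E}))$ with $\alpha_0=a+1-\varepsilon$, transfer the super Poincar\'e inequality from $\mathcal{E}$ to $\mathcal{E}_{v,\alpha_0}$ via the lower bound in (3.4), arrive at the same threshold $\widetilde{r_0}=\frac{r_0}{(1+r_0)(1-\varepsilon)}$, and verify $\alpha_0<1/\widetilde{r_0}$ is equivalent to the hypothesis $r_0<\frac{1-\varepsilon}{a}$. Your treatment is in fact more explicit than the paper's (which simply declares the derivation ``straightforward''), and your handling of ii) via Remark~\ref{rem 3.7} together with $P^v_{t_0}1=1$ is exactly what the paper intends.
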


\begin{proof}
Since ii) follows by Remark 3.7, we prove only the first statement. 
It is straightforward to check that under i) and taking into account the first inequality in (3.4), there exists $\gamma(r)$ such that
$$
\mu(f^2) \leq r\mathcal{E}_{v, \alpha}(f,f) + \gamma(r)\mu(\phi|f|)^2, \; f \in D(\mathcal{E}), \; r > \widetilde{r_0},
$$
where $\widetilde{r_0} = \dfrac{r_0}{(1 + r_0)(1 - \varepsilon)}$. 
Since $r_0 < \dfrac{1 - \varepsilon}{a}$ and $\alpha = a + 1 - \varepsilon$, it follows that $\alpha < (\widetilde{r_0})^{-1}$, 
and by applying Lemma \ref{thm 3.8} we obtain the desired conclusion.
\end{proof}

\vspace{2mm}

\noindent
{\bf Acknowledgments.} Financial support by the Deutsche Forschungsgemeinschaft, project number RO1195/10-1 is gratefully
acknowledged.  We would also like to thank Martin Hairer for pointing out to us an 
error in an example contained in a previous version of this paper.
The first-named author acknowledges  support from the Romanian National
Authority for Scientific Research, CNCS-UEFISCDI, project number PN-II-IDPCE-2011-3-0045.
The second-named author is grateful to his father's support.

\section{Appendix}

\noindent
{\bf A.1 Proof of Lemma 2.1}. i) Let $f \in L^1_+(m)$ and define $P_t^*f : = \mathop{\sup}\limits_n P_t^* (f\wedge n)$ in $L^1(m)$.
By monotone convergence we get that $\|P_t^*f\|_1=m(P_t^*f)=\mathop{\sup}\limits_n m(f\wedge n)=m(f)<\infty$.
Moreover, $\|P_t^*f - f\|_1 \leq \|P_t^*(f\wedge n) - f \wedge n\|_1 + \|P_t^*f - P_t^*(f\wedge n)\|_1 + \|f - f\wedge n\|_1$ $\leq \|P_t^*(f\wedge n) - f \wedge n\|_{p'} + 2\|f - f\wedge n\|_1 \mathop{\longrightarrow}\limits_{t \to 0} 2\|f - f\wedge n\|_1 \mathop{\longrightarrow}\limits_{n \to \infty} 0$. 
By linearity, it follows that $(P_t^*)_{t\geq 0}$ is a strongly continuous semigroup on $L^1(m)$. 
The fact that $(P_t^*)_{t\geq 0}$ consists of positivity preserving operators is straightforward, by duality.

\vspace{0.2cm}

ii). If $(P_t)_{t\geq 0}$ satisfies (A$_2$) or (A$_1$) for $p=1$ then $(P_t)_{t\geq 0}$ may also be regarded as a semigroup of Markovian operators on $L^\infty(m)$. 
Denoting by $(P_t^*)_{t\geq 0}$ its adjoint on $(L^\infty(m))^\ast$, we prove first that $(P_t^*)_{t\geq 0}$ may be restricted to $L^1(m)$. 
To this end, let $f \in L^1_+(m)$. 
Then $P_t^*f$ is a positive finitely additive measure on $(E, \mathcal{B})$ which is absolutely continuous with respect to $m$. 
Let $(A_n)_n$ be a sequence of mutually disjoint $\mathcal{B}$-measurable sets. 
Then $P_t^*f(\mathop{\cup}\limits_n A_n)
=m(fP_t(1_{\mathop{\cup}\limits_n A_n}))=\mathop{\sum}\limits_n m(fP_t(1_{A_n})) = \mathop{\sum}\limits_n P_t^*f(A_n)$, 
hence $P_t^*f$ is a $\sigma$-additive positive measure, absolutely continuous with respect to $m$. 
By Radon-Nikodym it follows that $P_t^*f$ identifies with an element from $L^1_+(m)$. 
By linearity, we obtain that $(P_t^*)_{t\geq 0}$ may be regarded as a semigroup of positivity preserving operators on $L^1(m)$.

Now, since for each $t \geq 0$ the measure $p\mathcal{B} \ni f \mapsto m(\int_0^tP_sfds)$ 
is absolutely continuous with respect to $m$, there exists $\varphi_t \in L^1_+(m)$ such that $m(\int_0^t P_s f ds)=m(\varphi_t f)$, i.e. ii.1) holds. 
Then, $P_t^* \varphi_s (g) = m(\varphi_s P_t g) = 
m(\int_0^sP_{r+t}g dr) = m(\int_0^{s+t}P_rg dr) - m(\int_0^tP_{r}g dr) = \varphi_{t+s} - \varphi_s$, which proves ii.2).
Finally, $\| \dfrac{1}{s} (\varphi_{t + s} - \varphi_s) \|_1 \leq \dfrac{1}{s} (\|\varphi_{t + s}\|_1 + \|\varphi_s \|_1) = \dfrac{t}{s} \mathop{\longrightarrow}\limits_{s \to \infty} 0$, where the last inequality follows from ii.1).

\vspace{0.2cm}
\noindent
{\bf A.2 Biting lemma} \; (cf. \cite{BrCh80}) {\it Let $(\Omega, \mathcal{F}, m)$ 
be a finite positive measure space and let $(f_n)_{n \geq 1}$ 
be a bounded sequence in $L^1(m)$, i.e. $\sup\limits_n \int_E|f_n|dm < \infty$.
Then there exist  a function $f \in L^1(m)$, a subsequence $(f_{n_k})_{k \geq 1}$ 
and a decreasing sequence of measurable sets $(B_r)_{r \geq 1}$ with $\lim\limits_{r \to \infty} m(B_r) = 0$ such that
$$
f_{n_k} \mathop{\rightharpoonup}\limits_{k \to \infty} f \; \mbox{weakly in} \; L^1(E \setminus B_r, m)
$$
for every fixed $r \geq 1$.}

\noindent
{\bf A.3 Koml\'os lemma} \; (cf. \cite{Ko67}) {\it Let $(\Omega, \mathcal{F}, m)$ 
be a finite positive measure space and $(f_n)_{n \geq 1}$ be a bounded sequence in $L^1(m)$.
Then there exist a function $f \in L^1(m)$ and a subsequence $(f_{n_k})_{k \geq 1}$ such that
$$
\dfrac{1}{N}\sum\limits_{k=1}^{N}f_{n_k} \mathop{\longrightarrow}\limits_{N \to \infty} f \; \mbox{almost surely}. \leqno{(a.1)}
$$
Moreover, the subsequence $(f_{n_k})_{k \geq 1}$ can be chosen in such a way that its further subsequence will also satisfy (a.1). }

\end{document}